\numberwithin{equation}{subsection}
\newcommand{\R}{\mathbb{R}}
\renewcommand{\H}{\mathbb{H}}
\newcommand{\Z}{\mathbb{Z}}
\newcommand{\Q}{\mathbb{Q}}
\newcommand{\N}{\mathbb{N}}
\newcommand{\C}{\mathbb{C}}
\newcommand{\sgn}{\mathrm{sgn}}
\newcommand{\Pic}{\mathrm{Pic}}
\renewcommand{\P}{\mathbb P}
\renewcommand{\cD}{\mathbb D}
\newcommand{\twopartdef}[4]
{
	\left\{
		\begin{array}{ll}
			#1 & \mbox{if } #2 \\
			#3 & \mbox{if } #4
		\end{array}
	\right.
}
\theoremstyle{plain}
\newtheorem{theorem}{Theorem}[section]
\newtheorem{proposition}[theorem]{Proposition}
\newtheorem{lemma}[theorem]{Lemma}
\newtheorem{corollaire}[theorem]{Corollary}
\newtheorem{corollary}[theorem]{Corollary}
\theoremstyle{definition}
\newtheorem{definition}[theorem]{Definition}
\theoremstyle{remark}
\newtheorem{remarque}[theorem]{Remark}
\newtheorem{notation}[theorem]{Notation}
\newcommand\numberthis{\addtocounter{equation}{1}\tag{\theequation}}
\title{Mixed mock modularity of special divisors}
\author{Philip Engel}
\address{Department of Mathematics, University of Georgia, Boyd Hall, Athens, GA 30602}
\email{philip.engel@uga.edu}
\author{Fran\c{c}ois Greer}
\address{Department of Mathematics, Michigan State University, 619 Red Cedar Rd, East Lansing, MI 48824}
\email{greerfra@msu.edu}
\author{Salim Tayou}
\address{Department of Mathematics, Harvard University, 1 Oxford St, Cambridge, MA 02138, USA}
\email{tayou@math.harvard.edu}
\date{\today}
\begin{document}

\begin{abstract}
We prove that the generating series of special divisors in toroidal compactifications of orthogonal Shimura varieties is a mixed mock modular form. More precisely, we find an explicit completion using theta series associated to rays in the cone decomposition. The proof relies on intersection theory at the boundary of the Shimura variety.  

\end{abstract}

\maketitle
\setcounter{tocdepth}{1}
\tableofcontents

\section{Introduction} 
Let $(L,\cdot)$ be a non-degenerate even lattice of signature $(2,n)$ with quadratic form $Q(x)=\tfrac{1}{2}x\cdot x$. Associated to $L$ is a type IV Hermitian symmetric domain $\cD$, defined as a connected component of
$$\{ \C x \in \P(L_\C)\,\big{|}\, x\cdot x=0, \, x\cdot \bar x>0\}.$$ 
It is a complex manifold of dimension $n$. Let $O^+(L)$ denote the group of orthogonal transformations of $L$  which preserves the component $\cD$, and let $\Gamma\subset O^+(L)$ be a finite index subgroup which acts trivially  on the discriminant group $L^{\vee}/L$, where $L^{\vee}$ is the dual lattice. 

The arithmetic quotient $X_\Gamma:=\Gamma\backslash \cD$ is a connected complex Shimura variety of orthogonal type. It parameterizes polarized Hodge structures on the lattice $L$ with Hodge numbers $(1,n,1)$. Such Hodge structures arise naturally in the study of polarized K3 surfaces when $n=19$, and additionally of elliptic curves, abelian surfaces with real multiplication, and abelian surfaces, when $n=1,2,3$, respectively. 

There is a countable family of divisors, called \emph{special divisors}\footnote{or Heegner divisors}, which are the images of orthogonal Shimura varieties associated to sublattices of $L$ of signature $(2,n-1)$. Geometrically, they correspond to Noether--Lefschetz loci for the polarized variation of Hodge structure parameterized by $X_\Gamma$. For every $\beta\in L^{\vee}/L$ and $m\in -Q(\beta)+\Z$, there is an associated special divisor $Z^o(\beta,m)\subset X_\Gamma$ defined in Equation \ref{special-divisor}. We denote its class in the cohomology group $H^2(X_\Gamma,\Q)$ by $[Z^o(\beta,m)]$. Define $[Z^o(0,0)]$ to be the first Chern class of the dual of the Hodge bundle on $X_\Gamma$. The special cycles can be packaged in the following generating series with values in $H^2(X_\Gamma,\Q)\otimes \C[L^\vee/L]$: 

\begin{align}\label{generating-series}
[Z^o(0,0)]\otimes \mathfrak{e}_0 +\sum_{\beta\in L^{\vee}/L}\sum_{m\in -Q(\beta)+\Z} [Z^o(\beta,m)]\otimes \mathfrak{e}_\beta q^{m}.
\end{align}

In an influential paper \cite{borcherdszagier}, Borcherds proved that Equation \ref{generating-series} is a vector-valued modular form of weight $1+\frac{n}{2}$ and representation $\overline{\rho}_L$ valued in ${\rm Pic}(X_\Gamma)$, where $\rho_L$ denotes the Weil representation associated to the discriminant lattice $(L^{\vee}/L,Q)$. His proof relied on earlier work \cite{borcherds-inventiones-automorphic}, which constructed meromorphic modular forms on $X_\Gamma$ whose zeroes and poles are special divisors. A cohomological version was known previously by the work of Kudla--Millson \cite{kudla-millson}, see also \cite{garcia}, and Hirzebruch-Zagier in the case of Hilbert modular varieties \cite{hirzag}.

This paper is concerned with the modularity behavior of Equation \ref{generating-series}, in compactifications of the variety $X_\Gamma$. In general, $X_\Gamma$ is non-compact and admits several compactifications. The first compactification was constructed by Satake \cite{satake} and Baily-Borel \cite{bailyborel} and is called the \emph{Baily-Borel compactification}. It is typically singular at the boundary, which has codimension at least $n-1$.

We will be interested in the {\it toroidal compactifications} $X_\Gamma\hookrightarrow X_\Gamma^{\Sigma}$, first constructed in \cite{mumford-amrt}. They always have divisorial boundary, but are non-unique, depending on certain combinatorial data $\Sigma$. This data consists, for each isotropic line $I\subset L$, of a polyhedral cone decomposition $\Sigma_I$ of the positive cone of the hyperbolic lattice $I^\perp/I$.
These decompositions $\Sigma_I$ must additionally be $\Gamma$-invariant. We refer to \Cref{s:tor-comp} for more details.

We will assume that the chosen polyhedral cone decomposition is simplicial, so that $X_\Gamma^\Sigma$ has finite quotient singularities. It is also possible to choose $\Gamma$ and $\Sigma$ so that $X_\Gamma^{\Sigma}$ is a smooth projective algebraic variety, although this is not necessary for our main results.


\begin{theorem} \label{t:main}
The generating series \begin{align}\label{generating-series-closure}
[Z(0,0)]\otimes \mathfrak{e}_0 +\sum_{\beta\in L^{\vee}/L}\sum_{m\in -Q(\beta)+\Z} [Z(\beta,m)]\otimes \mathfrak{e}_\beta q^{m}
\end{align} of cohomology classes of Zariski closures $Z(\beta,m):= \overline{Z^o(\beta,m)}$, valued in $H^2(X_\Gamma^\Sigma,\Q)\otimes \C[L^\vee/L]$, is a mixed mock modular form of weight $1+n/2$ with respect to $\overline{\rho}_L$. 
\end{theorem}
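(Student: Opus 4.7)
The plan is to reduce the theorem to Borcherds' modularity on the open variety $X_\Gamma$ by explicitly computing the boundary correction via intersection theory. Since $[Z(\beta,m)]$ restricts to $[Z^o(\beta,m)]$ under the map $H^2(X_\Gamma^\Sigma,\Q) \to H^2(X_\Gamma,\Q)$, any two extensions of $[Z^o(\beta,m)]$ differ by a class supported on the boundary, which is a sum of divisor classes $[D_\sigma]$ indexed by $\Gamma$-orbits of rays $\sigma$ in the cone decompositions $\Sigma_I$ attached to the isotropic lines $I \subset L$. Writing
\begin{equation*}
  [Z(\beta,m)] \;=\; [\widehat Z(\beta,m)] \;+\; \sum_\sigma c_\sigma(\beta,m)\,[D_\sigma],
\end{equation*}
where $\widehat Z(\beta,m)$ denotes the Kudla-Millson cohomology class attached to the Schwartz form on $L$, Borcherds' theorem together with the Kudla-Millson construction implies that the generating series of the $[\widehat Z(\beta,m)]$ is vector-valued modular of weight $1+n/2$ for $\overline\rho_L$.

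The heart of the proof is the intersection-theoretic computation of the boundary multiplicities $c_\sigma(\beta,m)$. Each isotropic line $I \subset L$ corresponds to a $1$-dimensional boundary component of the Baily-Borel compactification (an elliptic modular curve), whose preimage in $X_\Gamma^\Sigma$ is a union of toric divisors $D_\sigma$, one for each ray $\sigma$ of $\Sigma_I$. Near a generic point of $D_\sigma = D_{v_\sigma}$, with $v_\sigma \in I^\perp/I$ primitive and of positive norm, local analytic coordinates identify $X_\Gamma^\Sigma$ with a toric variety over an open subset of the modular curve. Working in these coordinates, one computes the order of vanishing of $Z^o(\beta,m)$ along $D_\sigma$ as a sum of pairings $\langle x, v_\sigma \rangle$ over lattice vectors $x \in \beta + L$ of norm $m$ whose image in $I^\perp/I$ is a positive multiple of $v_\sigma$.

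Packaging these multiplicities into a generating series, each vector $x$ contributing to $c_\sigma(\beta,m)$ splits into a component in the negative-definite sublattice $v_\sigma^\perp \cap (I^\perp/I)$ of rank $n-1$, producing a holomorphic theta series $\Theta_{v_\sigma^\perp}(\tau)$ of weight $(n-1)/2$, and a scalar component along $v_\sigma$, contributing a weighted unary sum of weight $3/2$. Summing over all rays of $\Sigma_I$, the weight-$3/2$ factors assemble into a cone-restricted indefinite theta function for the hyperbolic lattice $I^\perp/I$, in the sense of Zwegers and Funke-Kudla. Such cone-restricted theta functions are mock modular of weight $3/2$, with an explicit non-holomorphic completion given by an Eichler integral of unary theta series supported on the boundary rays of the cone decomposition. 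Multiplying by the definite theta factors and summing over isotropic lines and rays yields a mixed mock modular form of weight $1+n/2$ for $\overline\rho_L$, with completion given by the theta series attached to the rays in $\Sigma$ announced in the abstract.

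The main obstacle is the precise intersection-theoretic computation of $c_\sigma(\beta,m)$ in a form compatible with the Kudla-Millson extension, so that the correction terms isolate cleanly as cone-restricted indefinite theta functions. Subsidiary difficulties include the $\Gamma$-equivariance of the computation, gluing contributions from different isotropic lines $I$ into a single $\overline\rho_L$-valued mixed mock modular form, and verifying that the Zwegers-type completions attached to the various cones are mutually compatible under star-subdivisions of $\Sigma$, matching the birational transformations among the $X_\Gamma^\Sigma$.
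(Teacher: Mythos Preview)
Your broad philosophy—modularity on the open variety plus an explicitly computed boundary correction that turns out to be mixed mock—matches the paper, but the execution has real gaps. The decomposition $[Z(\beta,m)] = [\widehat Z(\beta,m)] + \sum_\sigma c_\sigma(\beta,m)[D_\sigma]$ presupposes a class $[\widehat Z(\beta,m)]\in H^2(X_\Gamma^\Sigma,\Q)$ whose generating series is already modular. Kudla--Millson and Borcherds only give modularity in $H^2(X_\Gamma,\Q)$ or $\mathrm{Pic}(X_\Gamma)_\Q$; there is no canonical lift to the compactification, and choosing one ad hoc destroys the modularity you need. The paper sidesteps this by dualizing: it proves a splitting theorem for $H_2(X_\Gamma^\Sigma,\Q)$ (every class is an interior-supported class plus a $\Q$-combination of toric boundary curves $C_\sigma$), and then pairs the generating series with each piece. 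For interior classes the pairing reduces to the Kudla--Millson pairing on $X_\Gamma$, which is modular; for the curves $C_\sigma$ the pairing is computed directly. No global lift $\widehat Z$ is ever needed.

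Two further points. Your boundary picture is inverted: a primitive isotropic \emph{line} $I\subset L$ gives a Type~III \emph{point} of $X_\Gamma^{\rm BB}$, not a modular curve; the modular curves come from isotropic \emph{planes} $J$. Near a Type~III cusp the local model is a genuine toric variety $X(\Sigma_I)$ for the fan $\Sigma_I$ in the hyperbolic lattice $K=I^\perp/I$, not a toric family over a curve. And the relevant computation is not an ``order of vanishing of $Z^o(\beta,m)$ along $D_\sigma$''—the Zariski closure acquires no boundary component by definition—nor does it localize to vectors projecting to multiples of a single ray $v_\sigma$. Instead one intersects $Z(\beta,m)$ with the one-dimensional toric strata $C_\sigma\cong\mathbb P^1$, and the answer is a theta series over \emph{all} of $K^\vee$ weighted by the piecewise-linear function $p^+(\lambda)=\tfrac12\bigl(|\lambda\cdot c^+|+|\lambda\cdot c^-|+\sum_i a_i|\lambda\cdot c_i|\bigr)$ determined by the adjacent maximal cones. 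It is this piecewise-linear indefinite theta series that is completed \`a la Zwegers--Vign\'eras; its non-holomorphic part then decomposes along the individual rays, yielding the $\Theta_{c^\perp}\otimes F_N^+$ and $E_2\cdot\Theta_M$ corrections.
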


Mixed mock modular forms are holomorphic parts of mixed harmonic Maass forms which have been studied by \cite{bruinierfunke,folsom} and which we define in \Cref{s:harmonic-mass-forms}. 

To give a more precise form of the above theorem, we introduce some notation: for any isotropic plane $J\subset L$, the lattice $M:=J^{\bot}/J$ is negative definite and has an associated theta series $\Theta_M$ of weight $\frac{n}{2}-1$ and representation $\overline{\rho}_M$. The image of this theta series under an operator between Weil representations $p_M^L$ is a modular form of the same weight, but for the representation $\overline{\rho}_L$.

On the other hand, for any isotropic line $I\subset L$ and a ray $\R_{\geq 0}c$ in the cone decomposition $\Sigma_I$ with $c\cdot c=2N>0$, the lattice $c^{\bot}\subset I^\perp/I$ is negative definite, and we can similarly consider the theta series $\Theta_{c^{\bot}}$ which is a vector-valued holomorphic modular form of weight $\frac{n-1}{2}$ and representation $\overline{\rho}_{c^{\bot}}$. The theta series $\Theta_N$ of the one dimensional lattice $\Z c$ also plays a role: there exists a weak harmonic Maass form $F_N$ of weight $3/2$ and representation $\overline{\rho}_{\Z c}$, originally constructed by Zagier for $N=1$ and generalized by Funke and Bruinier-Schwagenscheidt, whose shadow is proportional to $\Theta_N$, see \Cref{s:zagier}.
Our main theorem has the following more precise form. 
\begin{theorem}\label{t:precise-main}
The generating series 
\begin{align*} 
\sum_{\beta\in L^{\vee}/L}\,\sum_{m\in Q(\beta)+\Z} &[Z(\beta,m)]\otimes \mathfrak{e}_\beta q^{m} -\frac{1}{2}\sum_{(I,c)} \uparrow_{c^\perp \oplus \Z c}^{L}\left(\Theta_{c^\perp}\otimes F^+_N\right)\otimes \Delta_{I,c} \\ &+ \frac{1}{12}\sum_{J}
E_2(q)\cdot \uparrow_{M}^{L}\left(\Theta_M\right)\otimes \Delta_J
\end{align*}
valued in $H^2(X_\Gamma^\Sigma,\Q)\otimes \C[L^\vee/L]$ is a weakly holomorphic modular form of weight $1+\frac{n}{2}$ and representation $\overline{\rho}_L$. 
\end{theorem}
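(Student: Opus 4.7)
The plan is to invoke Borcherds' theorem on $X_\Gamma$ and to analyze the resulting discrepancy at the boundary of $X_\Gamma^\Sigma$. Let $j\colon X_\Gamma\hookrightarrow X_\Gamma^\Sigma$ denote the open immersion. Applying $j^*$ to the generating series yields $\sum[Z^o(\beta,m)]\otimes v_\beta q^m$, which is modular of weight $1+n/2$ and representation $\overline{\rho}_L$ by Borcherds' theorem. The kernel of $j^*$ on $H^2$ is spanned by boundary divisor classes, so the proof reduces to computing the multiplicity of each boundary class in $[Z(\beta,m)]$ and showing that these multiplicities assemble into the explicit generating series prescribed by the correction terms in the statement.

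The geometric core is the local computation in a formal neighborhood of each boundary divisor. Near the $0$-cusp attached to an isotropic line $I$, the toroidal compactification is modelled on the torus embedding determined by $\Sigma_I$, and $Z^o(\beta,m)$ is, in this neighborhood, a union of sub-Shimura varieties indexed by vectors $\lambda\in \beta+L$ with $Q(\lambda)=m$. Passing to the Zariski closure, the multiplicity of $\Delta_{I,c}$ is controlled by those $\lambda$ whose image in $I^\perp/I$ lies along the ray $\R_{\geq 0}c$. Writing such a $\lambda=\lambda'+tc^*$ with $\lambda'\in c^\perp$ and $tc^*\in \Z c\otimes\Q$, the summation over $\lambda'$ produces $\Theta_{c^\perp}$, while the summation over $t$ produces the Fourier expansion of the Zagier Eisenstein series $F^+_N$; the operator $p^L_{c^\perp\oplus\Z c}$ translates the decomposition back to a modular object for $\overline{\rho}_L$. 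The $\Delta_J$ contribution arises analogously from a boundary neighborhood attached to an isotropic plane $J$: the negative definite transverse lattice $M=J^\perp/J$ gives $\Theta_M$, and the additional fiber direction in $J$ contributes the factor $\tfrac{1}{12}E_2(q)$ reflecting an Euler-characteristic-type correction of an elliptic fibration.

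Having identified the boundary multiplicities, the remaining step is to check that the corrected series $\Phi(\tau)$ is modular. By construction $j^*\Phi$ is modular, so one only needs that the boundary contributions to $\Phi$ assemble modularly. Here the completions of the non-modular pieces enter: the holomorphic part $F^+_N$ extends to a genuine harmonic Maass form $F_N$ whose shadow is proportional to the unary theta $\Theta_N$, and $E_2$ has the quasimodular completion $E_2^* = E_2 - 3/(\pi\,\mathrm{Im}\,\tau)$. The non-holomorphic tails produced by these completions, each a product of a theta series with a $1/\mathrm{Im}\,\tau$ term, must cancel among the various $(I,c)$ and $J$ contributions to $\Phi$, via lattice identifications coming from the incidence relations between isotropic lines, rays, and isotropic planes in the combinatorial data $\Sigma$. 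This cancellation is what makes $\Phi$ simultaneously modular and holomorphic.

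The main obstacle will be the precise multiplicity computation and the verification of the cancellation between the $F^+_N$ and $E_2$ corrections. Computing the multiplicity of $[Z(\beta,m)]$ along each boundary stratum requires intersection theory against the torus-embedding boundary, including a careful separation of transverse from excess contributions, while the cancellation of non-holomorphic tails requires organizing, for each isotropic plane $J$, the coherent contribution of all rays $c$ attached to an isotropic line $I\subset J$. Along the way, one must check that the intertwiners $p^L_{c^\perp\oplus\Z c}$ and $p^L_M$ respect the $\Gamma$-equivariance of $\{\Sigma_I\}$, so that the generating series is genuinely an element of $H^2(X_\Gamma^\Sigma,\Q)\otimes\C[L^\vee/L]$.
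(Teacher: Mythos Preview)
Your reduction step has a genuine gap. You claim that the Zariski closure $Z(\beta,m)=\overline{Z^o(\beta,m)}$ acquires the boundary divisors $\Delta_{I,c}$ as components, with multiplicity determined by the $\lambda$ whose image in $I^\perp/I$ lies along the ray $\R_{\geq 0}c$. This is false: each irreducible component of $Z^o(\beta,m)$ near the Type~III cusp is a (translated) character hypersurface in the torus $U_I\otimes\C^*$, and its closure in the toric variety $X(\Sigma_I)$ meets the toric boundary properly, never containing any $\Delta_{I,c}$. So the ``multiplicity of $\Delta_{I,c}$ in $[Z(\beta,m)]$'' is always zero, and your proposed decomposition of $[Z(\beta,m)]$ into a modular interior part plus explicit boundary corrections does not exist at the cycle level. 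Knowing only that $j^*$ of the series is modular (Borcherds/Kudla--Millson) and that $\ker j^*$ is spanned by boundary classes does not by itself pin down a modular lift to $H^2(X_\Gamma^\Sigma)$. Similarly, the appearance of $F_N^+$ is not produced by a straightforward lattice summation over the $\Z c$-direction as you suggest; in the paper it emerges only after a non-holomorphic completion argument.

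The paper avoids this obstacle by working dually. Rather than decomposing the divisor classes, it decomposes the \emph{test} homology classes: every $\alpha\in H_2(X_\Gamma^\Sigma,\Q)$ splits as an interior class plus a sum of one-dimensional Type~III toric boundary strata $C_\sigma\simeq\mathbb P^1$ (this is the content of the splitting theorems in Section~4). Pairing with the interior piece gives modularity by Kudla--Millson. Pairing $Z(\beta,m)$ with a single $C_\sigma$ is computed explicitly by toric geometry (Proposition~3.5) and yields a theta series on the hyperbolic lattice $K=I^\perp/I$ weighted by a piecewise linear function $p^+(\lambda)=\tfrac12\big(|\lambda\cdot c^+|+|\lambda\cdot c^-|+\sum_i a_i|\lambda\cdot c_i|\big)$. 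The correction terms in the theorem are then \emph{deduced}, not guessed: one smooths $p^+$ via the error function $\psi$, applies Vign\'eras' criterion to obtain a genuine non-holomorphic modular form, and identifies its non-holomorphic part with $\Theta_{c^\perp}\otimes F_N^-$ for the non-isotropic rays and, via a limiting argument as the ray degenerates to an isotropic one, with $\tfrac{1}{2\pi y}\,p_M^K(\Theta_M)$, i.e.\ the non-holomorphic part of $-\tfrac16 E_2\cdot p_M^K(\Theta_M)$. No incidence-relation cancellation between Type~II and Type~III tails is required; the $E_2$ term arises from the isotropic rays of the \emph{same} fan $\Sigma_I$.
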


The quasi-modular form $E_2$ in the Theorem above is defined as $E_2(\tau)=1-24\sum_{n\geq 1}\sigma_1(n)q^n$. Also, $\Delta_J$ and $\Delta_{I,c}\subset X_\Gamma^\Sigma$ are the boundary divisors of the toroidal compactification $X_\Gamma^\Sigma$, associated to ($\Gamma$-orbits of) isotropic planes $J\subset L$ and rays $\R_{\geq 0}c\in \Sigma_I$. 
As $\uparrow_{M}^{L}\left(\Theta_M\right)$ and $\uparrow_{c^\perp\oplus \Z c}^{L}\left(\Theta_{c^\perp}\otimes F^+_N\right)$ are mixed mock modular forms, \Cref{t:precise-main} implies \Cref{t:main}. 

Using Margulis' super-rigidity theorem, we prove that the first Betti number of $X_\Gamma^{\Sigma}$ vanishes when $n\geq 3$ or if $n=2$ and $\Gamma$ is an irreducible lattice, see \Cref{s:superR}. We get the following corollary. 
\begin{corollaire}\label{c:modularity-picard}
Assume either $n\geq 3$ or the Witt index of $L$ is $1$ and $n=2$. Then the generating series in Equation \ref{generating-series-closure} is also a mixed mock modular form in $\mathrm{Pic}(X_\Gamma^{\Sigma})_\Q$ of weight $1+\frac{n}{2}$ and representation $\overline{\rho}_L$. 
\end{corollaire}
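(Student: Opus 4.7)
The plan is to deduce the corollary from Theorem \ref{t:precise-main} by upgrading modularity from $H^2(X_\Gamma^\Sigma,\Q)$ to $\Pic(X_\Gamma^\Sigma)_\Q$ along the first Chern class map
\[
c_1 \colon \Pic(X_\Gamma^\Sigma)_\Q \longrightarrow H^2(X_\Gamma^\Sigma,\Q),
\]
after verifying that this map is injective under the given hypotheses. The passage of (mixed mock) modularity along such a linear injection of $\Q$-vector spaces is then formal.

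First I would observe that every Fourier coefficient of the generating series \ref{generating-series-closure}, together with every term appearing in the completion of Theorem \ref{t:precise-main}, is the class of an algebraic divisor on $X_\Gamma^\Sigma$: the special cycles $Z(\beta,m)$ are Zariski closures of algebraic subvarieties by definition, and the boundary components $\Delta_J$ and $\Delta_{I,c}$ are algebraic by construction. Consequently, the whole completed series lifts naturally from $H^2(X_\Gamma^\Sigma,\Q)\otimes \C[L^\vee/L]$ to $\Pic(X_\Gamma^\Sigma)_\Q \otimes \C[L^\vee/L]$, and it remains to show that its image in $H^2 \otimes \C[L^\vee/L]$ determines it.

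For the injectivity of $c_1 \otimes \Q$, I would use the exponential sheaf sequence, whose kernel is $\Pic^0(X_\Gamma^\Sigma)$. Since $X_\Gamma^\Sigma$ is a normal projective variety with at worst finite quotient (hence rational) singularities, Hodge theory identifies $\Pic^0(X_\Gamma^\Sigma)$ with a complex torus of dimension $\tfrac{1}{2} b_1(X_\Gamma^\Sigma)$; this can be made rigorous by pulling back along a smooth toroidal cover $X_{\Gamma'}^{\Sigma'} \to X_\Gamma^\Sigma$ attached to a torsion-free finite-index normal subgroup $\Gamma' \trianglelefteq \Gamma$ and a suitable refinement $\Sigma'$ of $\Sigma$, and then taking $\Gamma/\Gamma'$-invariants of both $\Pic_\Q$ and $H^2_\Q$. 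Section \ref{s:superR} establishes $b_1(X_\Gamma^\Sigma) = 0$ via Margulis superrigidity precisely under our hypotheses ($n \geq 3$, or $n=2$ with $\Gamma$ irreducible), so $\Pic^0(X_\Gamma^\Sigma) = 0$ and $c_1 \otimes \Q$ is injective. The principal substantive input is therefore the $b_1$-vanishing proved in Section \ref{s:superR}; modulo that, the deduction is formal, with the only technical point being the Hodge-theoretic comparison on the possibly singular $X_\Gamma^\Sigma$, handled by the covering argument just described.
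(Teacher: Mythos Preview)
Your proposal is correct and follows essentially the same approach as the paper: the corollary is deduced from Theorem~\ref{t:precise-main} together with the vanishing $H^1(X_\Gamma^\Sigma,\Q)=0$ established in Section~\ref{s:superR} via Margulis superrigidity, which forces the first Chern class map $c_1\colon \Pic(X_\Gamma^\Sigma)_\Q \to H^{1,1}(X_\Gamma^\Sigma,\Q)$ to be an isomorphism (the paper uses the exponential sequence and Hodge decomposition exactly as you do). Your extra care with the singular case via a smooth cover is not spelled out in the paper but is consistent with its standing reduction in Remark~\ref{r:simplicial}.
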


\subsection{Strategy of the proof}
We prove \Cref{t:precise-main} by showing that the pairing of the generating series \ref{generating-series-closure} with every homology class in $H_2(X_\Gamma^\Sigma,\Q)$ is a holomorphic modular form of weight $1+\frac{n}{2}$ and level $\overline{\rho}_L$. For this, we prove a splitting theorem stating that the second homology of $X_\Gamma^\Sigma$ is rationally generated by the homology of the open part $X_\Gamma$ and the classes of algebraic curves $C\subset X^\Sigma_\Gamma$ contracted in the Baily-Borel compactification. For homology classes supported in the open part, we use Kudla--Millson's result \cite{kudla-millson} as an input to deduce that the generating series is modular.

Our second contribution is to compute the intersection of the special divisors with the curves $C\subset X_\Gamma^\Sigma$ in the boundary. The classes of such curves are generated by the one-dimensional toric boundary strata of $X_\Gamma^\Sigma$. We use tools from toric geometry to explicitly compute such intersection numbers. 

The resulting generating series is a theta series of a real hyperbolic lattice, with a piecewise linear weighting determined by the cone decomposition $\Sigma_I$. We find suitable non-holomorphic completions \`a la Zwegers, and prove that they are non-holomorphic modular forms by a theorem of Vigneras \cite{vigneras}. Finally, we determine the theta series explicitly, in terms of the rays in the cone decomposition. In some sense, our approach is closer to the original approach of Hirzebruch and Zagier \cite{hirzag} for Hilbert modular surfaces. 
\subsection{Comparison with earlier work}
A similar result was proved by Bruinier and Zemel in \cite{bruinierzemel}. In their paper, they analyze the vanishing order of the automorphic forms constructed by Borcherds \cite{borcherds-95-infinite-products}, using an explicit expression of its Petersson metric from \cite{bruinier,borcherds-inventiones-automorphic}. 
After the announcement of our paper, Bruinier pointed out to us \cite{bruinier-letter} that Borcherds' theta lifting method shows that the difference between the holomorphic generating series of \cite{bruinierzemel} and the weakly holomorphic generating series of \ref{t:precise-main} is indeed a weakly holomorphic modular form supported on the type III boundary.

The approach of \cite{bruinierzemel} yields a correction where the coefficients along the type III boundary divisors are a priori not rational, while our approach provides a correction with rational coefficients along those boundary components. By exchanging holomorphic for weakly holomorphic, we gain the rationality of the Fourier coefficients. On the other hand, the completion of \cite{bruinierzemel} is canonical, and ours depends on a choice of preimages for certain unary Theta functions, see \Cref{zagier-series}.

Our desire to understand \cite{bruinierzemel} from an intersection theoretic point of view, in the spirit of Hirzebruch--Zagier's seminal work \cite{hirzag}, was the starting point of our investigations. Our approach is more geometric, and also yields unconditional results in cohomology when $n=2$ and the Witt index of $L$ is $2$. 

Garc\'ia has also recently announced similar results using completely different methods \cite{garcia-kudla-millson}.

There has also been earlier work of \cite{peterson} in the context of moduli space of K3 surfaces of dimension 19 and with compactifications involving only type II boundary components. In the case of modular curves, our work generalizes results of Zagier \cite{zagier-cras} and Funke \cite{funke}.

We are hopeful that this work might extend to modularity results for higher codimension cycles, as well as to other contexts where Borcherds products are not available but Kudla--Millson's results are still valid.



\subsection{Organization of the paper}
In Section 2, we review the Weil representation, introduce mixed mock modular forms, and prove the mixed modularity of theta series with piecewise linear weights on hyperbolic lattices. In Section 3, we review in detail the construction of toroidal compactifications, and prove the main intersection formula of special divisors with toric boundary curves. In Section 4, we prove the splitting theorems for second homology that allow us to reduce the modularity computations to toric boundary curves of the toroidal compactification. We assemble the pieces of the proof of \Cref{t:precise-main} in Section 5.

\subsection{Acknowledgement}
We are grateful to Jan Bruinier and to the referee for pointing out a mistake in Section 2.3. We also thank Jan Bruinier for helpful correspondence, and for explaining the connection to \cite{bruinierzemel}. 
\section{Weil representation and vector-valued modular forms}\label{sec2}
In this section, we recall the Weil representation and define vector-valued mixed mock modular forms following \cite{bruinierfunke}, \cite{folsom}, and \cite{zagier-bourbaki}. Using a theorem of Vign\'eras \cite{vigneras}, we prove that certain theta series of hyperbolic lattices with piecewise linear weights are mixed mock modular forms. Then, we relate them to theta series of rays in the positive cone of the hyperbolic lattice.   
\subsection{Weil representation}
Let $(\Lambda,\cdot)$ be an even lattice of signature $(b^+,b^-)$, with bilinear form 
\begin{align*}
\Lambda\times \Lambda&\rightarrow \Z\\
(x,y)&\mapsto x\cdot y
\end{align*} and quadratic form  $Q(x)=\tfrac{1}{2}{x\cdot x}\in\Z$. Let $\Lambda^\vee$ be the dual lattice of $\Lambda$, defined as 
\[\Lambda^\vee:=\{x\in \Lambda_\Q \,\big{|}\, x\cdot y\in \Z,\,\forall y\in \Lambda\}.\]

The \emph{discriminant lattice} $\Lambda^{\vee}/\Lambda$ is a finite abelian group of cardinality $D_\Lambda=|\det(e_i\cdot e_j)_{i,j}|$ where $\{e_i\}$ form a basis of $\Lambda$. 
It is endowed with a $\Q/\Z$-valued quadratic form: 
\begin{align*}
    Q:\Lambda\times\Lambda\rightarrow \Q/\Z\\
    x\mapsto Q(x).
\end{align*}

Let $\mathrm{Mp}_{2}(\R)$ be the double metaplectic cover of $\mathrm{SL}_{2}(\R)$ whose elements consist of pairs $(M,\phi)$, where $$M=\begin{pmatrix} a&b\\c&d\\ \end{pmatrix}\in \mathrm{SL}_{2}(\R)$$ and $\phi$ is a holomorphic function on the Poincaré upper half-plane $\mathbb{H}$ such  that $\phi(\tau)^2=c\tau +d$, for all $\tau\in \mathbb{H}$. 
The group $\mathrm{Mp}_{2}(\Z)$ is generated by the following elements (see \cite[P.78]{cours}):

$$T=\left(\begin{pmatrix}1&1\\0&1\end{pmatrix},1 \right) \quad \textrm{and}\quad S=\left(\begin{pmatrix}0&-1\\1&0\end{pmatrix},\tau\mapsto\sqrt{\tau} \right).$$

The \emph{Weil representation} $\rho_\Lambda$ associated to $(\Lambda,\cdot)$ is defined as the unique representation \[\rho_\Lambda:\mathrm{Mp}_{2}(\Z)\rightarrow {\rm GL}(\C[\Lambda^\vee/\Lambda])\] 
 such that the elements $S$ and $T$ act in the following way:
\begin{align}\label{weil}
\begin{split}
\rho_{\Lambda}(T)\mathfrak{e}_{\gamma}&=e^{2\pi i Q(\gamma)}\mathfrak{e}_{\gamma},\\
\rho_{\Lambda}(S)\mathfrak{e}_{\gamma}&=\frac{{i}^{\frac{b^{-}-b^{+}}{2}}}{\sqrt{|\Lambda^{\vee}/\Lambda|}}\sum_{\delta\in \Lambda^\vee/\Lambda}e^{-2\pi i (\gamma\cdot\delta)}\mathfrak{e}_{\delta},
\end{split}
\end{align}
where $\gamma\in \Lambda^{\vee}/\Lambda$ and $(\mathfrak{e}_\gamma)_{\gamma\in \Lambda^{\vee}/\Lambda}$ is the canonical basis of $\C[\Lambda^{\vee}/\Lambda]$.

Let $\Lambda_1\subset \Lambda$ be a finite index sublattice. Then we have inclusions 
\[\Lambda_1\subseteq\Lambda\subseteq\Lambda^{\vee}\subseteq\Lambda_1^{\vee}.\]
Let $H=\Lambda/\Lambda_1$. It is a subgroup of $\Lambda_1^{\vee}/\Lambda_1$ and its orthogonal with respect to $\cdot $ is equal to  $\Lambda^{\vee}/\Lambda_1$. Moreover, the subquotient $H^{\bot}/H$ is isomorphic to $\Lambda^{\vee}/\Lambda$, the discriminant lattice of $\Lambda$. Let $p:H^{\bot}\rightarrow \Lambda^{\vee}/\Lambda$ be the projection map. We can then define the following two maps: 
\begin{align*}
\uparrow_{\Lambda_1}^{\Lambda}:\, \C[\Lambda_1^{\vee}/\Lambda_1]&\rightarrow  \C[\Lambda^{\vee}/\Lambda]\\ \numberthis \label{pull}
			\mathfrak{e}_{\gamma}&\mapsto \begin{cases}
                                 \mathfrak e_{p(\gamma)} & \text{if $\gamma \in H^{\bot}$,}\\
                                0 & \text{otherwise,}
                                \end{cases}
\end{align*}
\begin{align*}
 \uparrow_{\Lambda}^{\Lambda_1}:\, \C[\Lambda^{\vee}/\Lambda]&\rightarrow \C[\Lambda_1^{\vee}/\Lambda_1]\\\numberthis\label{push}
			\mathfrak{e}_{\delta}&\mapsto \sum_{\gamma\in H^{\bot},\,p(\gamma)=\delta}\mathfrak{e}_{\gamma}.		 
\end{align*}
These operators intertwine the Weil representations on both sides. 
\medskip

Let $I\subset \Lambda$ be a totally isotropic sublattice. The lattice $I^{\bot}/I$ is a non-degenerate lattice whose discriminant group can be realized as a subquotient of $\Lambda^{\vee}/\Lambda$: Let $\Lambda_I$ be the lattice generated by $\Lambda$ and $I_\Q\cap\Lambda^{\vee}$. Then the discriminant group of $\Lambda_I$ is in fact isomorphic to the discriminant group of $I^{\bot}/I$. Hence we make the following definition.
\begin{definition}\label{d:pull-push-isotropic}
    If $I\subset \Lambda$ is a totally isotropic subspace, then we define
    \[\uparrow_{I^{\bot}/I}^{\Lambda}:=\uparrow_{\Lambda_I}^{\Lambda}\quad\textrm{and}\quad \uparrow^{I^{\bot}/I}_{\Lambda}:=\uparrow^{\Lambda_I}_{\Lambda}~.\]
These operators intertwine the Weil representations on both sides. 
\end{definition}

\subsection{Weak harmonic Maass forms and mixed mock modular forms}\label{s:harmonic-mass-forms}
Let $k\in \frac{1}{2}\Z$. Following \cite[Section 3]{bruinierfunke}, recall the following definition. 
\begin{definition}\label{definition-mass}
A \emph{weak harmonic Maass form} of weight $k$ and representation $\rho_\Lambda$ is a twice-differentiable function $f:\mathbb{H}\rightarrow \C[\Lambda^\vee/\Lambda]$ that satisfies:
\begin{enumerate}
    \item $f(\gamma.\tau)=\phi(\tau)^{2k}\rho_\Lambda(\gamma,\phi)f(\tau)$ for all $(\gamma,\phi)\in\mathrm{Mp}_2(\Z)$;
    \item there exists $C>0$ such that $f(\tau)=O(e^{Cy})$ as $y\rightarrow +\infty$, $\tau=x+iy$ (uniformly in $x$);
    \item $\Delta_kf=0$ where \[\Delta_k:=-y^2\left(\frac{\partial ^2}{\partial x^2}+\frac{\partial ^2}{\partial y^2}\right)+iky\left(\frac{\partial}{\partial x}+i\frac{\partial}{\partial y}\right)\] 
    is the hyperbolic Laplace operator in weight $k$.
\end{enumerate}
\end{definition}

We denote by $H_{k}(\rho_\Lambda)$ the $\C$-vector space of weak harmonic Maass forms of weight $k$ and representation $\rho_\Lambda$. By \cite[Equation (3.2)]{bruinierfunke}, any such form  $f$ has a unique decomposition $f=f^++f^-$ where $f^+$ is holomorphic and $f^-$ is called the \emph{non-holomorphic part} of $f$. 

If $f^-=0$ then $f$ is a weakly holomorphic modular form. Denote by $M^{!}_{k}(\rho_\Lambda)$ the space of weakly holomorphic modular forms of weight $k$ and representation $\rho_\Lambda$.
The \emph{shadow operator} on $H_{k}(\rho_\Lambda)$ is

\begin{align}\label{shadow}
\xi_{k}(f)=2iy^{k}\overline{\partial_{\overline{\tau}}f}\in H_{k}(\overline{\rho}_\Lambda).
\end{align}

By \cite[Prop. 3.2]{bruinierfunke}, $\xi_{k}(f)\in M^{!}_{2-k}(\overline{\rho}_\Lambda)$ and the kernel of $\xi_k$ is equal to $M^{!}_{k}(\rho_\Lambda)$.
The following definition is inspired from \cite[Definition 5.16]{folsom} and is originally due to Zagier \cite{zagier-bourbaki}. We give here a generalization for vector-valued modular forms. 

\begin{definition}
A \emph{vector-valued mock modular form} of weight $k$ is the holomorphic part $f^{+}$ of a weak harmonic Maass form $f$ of weight $k$.
\end{definition}

If $f:\H\rightarrow \C[\Lambda_1^{\vee}/\Lambda_1]$ and $g:\H\rightarrow \C[\Lambda_2^{\vee}/\Lambda_2]$ are two functions which satisfy relation (1) in \Cref{definition-mass}, the function $$f\otimes g:\mathbb{H}\rightarrow \C[\Lambda_1^{\vee}/\Lambda_1]\otimes\C[\Lambda_2^{\vee}/\Lambda_2]$$ can be seen naturally as a function valued on $\C[\Lambda^{\vee}/\Lambda]$ where $\Lambda=\Lambda_1\oplus\Lambda_2$. Then $f\otimes g$ satisfies the same invariance relation with respect to $\rho_{\Lambda^{\vee}/\Lambda}\simeq \rho_{\Lambda_1^{\vee}/\Lambda_1}\otimes \rho_{\Lambda_2^{\vee}/\Lambda_2}$. 
Furthermore, if $\Lambda\subseteq \Lambda'$ is a sublattice of finite index, then we have natural morphisms of vector spaces given by Equation \ref{pull} and Equation \ref{push}: 
\begin{align*} & \uparrow_{\Lambda}^{\Lambda'}:H_{k}(\rho_\Lambda)\rightarrow H_{k}(\rho_{\Lambda'})\textrm{ and} \\
 & \uparrow_{\Lambda'}^{\Lambda}:H_{k}(\rho_{\Lambda'})\rightarrow H_{k}(\rho_\Lambda)\end{align*}
which commute with the shadow operator. In particular, they preserve the holomorphic and weakly holomorphic modular forms. 

The following definition is a slight generalization of Definition 13.1 from \cite{folsom}.
\begin{definition}
A \emph{mixed harmonic Maass form} of weight $(k,\ell)$ and representation $\rho_{\Lambda}$ is a function $h$ of the form 
\[h=\sum_{j=1}^{N}\uparrow_{\Lambda_{j}\oplus \Lambda_j^{\bot}}^{\Lambda}(f_j\otimes g_j),\]
where $\Lambda_j\subseteq \Lambda$ is a non-degenerate sublattice, $f_j$ is a weakly holomorphic vector-valued modular form of weight $k$ with representation $\rho_{\Lambda_j}$ and $g_j$ is vector-valued weak harmonic Maass form of weight $l$ and representation $\rho_{\Lambda_j^{\bot}}$. If $g^+_j$ is the holomorphic part of $g_j$ then by definition, \[h^+=\sum_{j=1}^{N}\uparrow_{\Lambda_{j}\oplus \Lambda_j^{\bot}}^{\Lambda}(f_j\otimes g^+_j),\] is a \emph{mixed mock modular form}. 
\end{definition}

\begin{remarque}
One easily checks that $h^+$ is the holomorphic part of $h$ and hence a different expression of $h$ yields the same function $h^+$.    
\end{remarque}


If we fix a weight $k\in \frac{1}{2}\Z$ and a representation $\rho_\Lambda$, then we can also define mixed mock modular forms of total weight $k$ as follows. 
\begin{definition}\label{d:mixed-mock-modularity}
A \emph{mixed harmonic Maass form} of weight $k$ and representation $\rho_\Lambda$ is a function which can be expressed as a finite sum 
\[\sum_{k_1+k_2=k}h_{k_1,k_2}\] where each $h_{k_1,k_2}$ is a mixed harmonic Maass form of weight $(k_1,k_2)$ and representation $\rho_\Lambda$. By definition, its holomorphic part $\sum_{k_1+k_2=k}h^+_{k_1,k_2}$ is a mixed mock modular form of weight $k$ and representation $\rho_\Lambda$.
\end{definition}



\subsection{Zagier's Eisenstein series and generalizations}\label{s:zagier}

Let $N\geq 1$ and assume in this section that $\Lambda=\Z x$ is endowed with the bilinear form $x\cdot x=2N$. The discriminant lattice is then isomorphic to $\Z/2N\Z$. 

The vector-valued theta series associated to $\Lambda$ is expressed as
\begin{align}\label{theta-half}
    \Theta_{\frac{1}{2},N}=\sum_{n\in \Z}q^{\tfrac{n^2}{4N}}\mathfrak{e}_{[n]}\in M_{\frac{1}{2}}(\rho_N)~,
\end{align} where $[n]\in \Z/2N\Z$ is the congruence class.
It is a holomorphic modular form of weight $\frac{1}{2}$ and representation $\rho_{N}$.

Recall the shadow operator of Equation \ref{shadow} is
\[\xi_{\frac{3}{2}}: H_{\frac{3}{2}}(\overline{\rho}_N)\rightarrow M^{!}_{\frac{1}{2}}(\rho_N)~.\]
In their seminal paper \cite{hirzag}, see also \cite{zagier-cras}, \cite[Theorem 6.3]{folsom}, Hirzebruch and Zagier construct for $N=1$ a particular weak harmonic Maass form $F_1\in H_{\frac{3}{2}}(\overline{\rho}_1)$, {\it Zagier's Eisenstein series}, satisfying \[\xi_{\frac{3}{2}}\left(F_1\right)=-\frac{1}{8\pi}\Theta_{\frac{1}{2},1}.\] 
The holomorphic part of this form has the important property that all its Fourier coefficients are rational numbers of arithmetic significance. In what follows, we recall the construction for general $N\geq 1$ following the work of Bruinier and Schwagenscheidt, see \cite[Theorem 4.3]{bruinier-sch}.

Let $r\in \Z/2N\Z$ and let $D \in \Z_{<0}$ such that $D\equiv r^2\,(4N)$. Let $Q_{N,D,r}$ be the set of integral binary quadratic forms $Q:x\mapsto ax^2+bxy+cy^2$ of discriminant $D=b^2-4ac$ with $N|a$ and $b\equiv r\,(2N)$. The group $\Gamma_{0}(N)$ acts on this set with finitely many orbits  and the order of a stabilizer $w_Q=\frac{1}{2}|\mathrm{Stab_Q}(\Gamma_{0}(N))|$ is finite. For each such binary form, one can define a CM point $z_Q=\frac{-b+\sqrt{D}}{2a}\in \H$ and which satisfies the equation $Q(z_Q,1)=0$.\\ 

Let $M^{!,\infty}_{0}(N)$ be the space of weakly holomorphic modular forms of weight $0$, of level $N$ and which vanish at all cusp except $\infty$. Let $F\in M^{!,\infty}_{0}(N)$ be a form given by \cite[Lemma 4.2]{bruinier-sch}. We can write such form as $F=\sum_{n\gg-\infty}a(n)q^n$, the $a(n)$ are rational coefficients and $a(0)=\frac{1}{2}$. Define 
\[H(D,r,F):=\sum_{Q_{N,D,r}/\Gamma_{0}(N)}\frac{F(z_Q)}{w_Q}~.\]
For $t\geq 0$, let $\beta(t)=\frac{1}{2\pi}\int_{1}^{\infty}u^{-\frac{3}{2}}e^{-\pi t u}du$ and let $(\mathfrak{e}_r)_{r\in \Z/2N\Z}$ be the canonical basis of $\C[\Lambda^\vee/\Lambda]$. We define

\begin{multline}
    F_N(\tau)=4\sum_{n\geq 0}a(-n)\sigma_{1}(n)\mathfrak e_0+\sum_{r\in \Z/2N\Z}\,\sum_{\substack{D\equiv r^2\,(4N) \\ D<0}}^{\infty}H(D,r,F)q^{-\tfrac{D}{4N}}\mathfrak e_r\\
    -\sum_{m>0}\sum_{n=1}^{\infty}ma(-mn)q^{-\tfrac{m^2}{4N}}(\mathfrak{e}_{[m]}+\mathfrak{e}_{[-m]})\\
    +\frac{\sqrt{N}}{\sqrt{y}}\sum_{n\in \Z}\beta\left(\frac{yn^2}{N}\right)q^{-\tfrac{n^2}{4N}}\mathfrak{e}_{[n]}.
\end{multline}

Notice in particular that the principal part of $F_N$ is non-trivial in general, unless $N=1$ (this recovers the series constructed by Zagier) or if $N$ is prime, see \cite[Remark 4.4]{bruinier-sch}. By \cite[Theorem 4.3 (1)]{bruinier-sch}, we have the following lemma. 

\begin{proposition}\label{zagier-series}
The series $F_N$ is an element of $H_{\frac{3}{2}}(\overline{\rho}_N)$. Moreover, 
\[\xi_{\frac{3}{2}}(F_{N})=-\frac{\sqrt{N}}{8\pi}\Theta_{\frac{1}{2},N}.\]
\end{proposition}

\subsection{Vignéras' modularity criterion}

We recall in this section Vignéras' modularity criterion \cite[Th\'eor\`eme 1]{vigneras}, see also \cite[Theorem 2.3]{funkekudla}, which will be our main tool for constructing mixed mock modular forms. 

Let $(\Lambda,Q)$ be a quadratic space of signature $(p,q)$. If we choose a basis of $\Lambda_\R$ in which $Q$ has the shape $Q(x)=\frac{1}{2}\left(\sum_{i=1}^{p}x_i^2-\sum_{i=p+1}^{p+q}x_i^{2}\right)$, then the Euler differential operator $E$ and the Laplace differential operator $\Delta$ associated to $-Q$ are:

\[E=\sum_{i=1}^{p+q}x_i\frac{\partial}{\partial x_i},\qquad \Delta=-\sum_{i=1}^{p}\frac{\partial^2}{\partial x_i^2}+\sum_{i=p+1}^{p+q}\frac{\partial^2}{\partial x_i^2}.\]

We have the following theorem, see \cite[Th\'eor\`eme 1]{vigneras} applied to the quadratic space $(\Lambda,-Q)$.
\begin{theorem}\label{vigneras}
Let $p$ be a twice differentiable function on $\Lambda_\R$ that satisfies the following assumptions: 
\begin{enumerate}
    \item The differential equation \[(E-\tfrac{1}{4\pi}\Delta)p=kp\] holds for an integer $k$.
    \item Let $f:x\mapsto p(x)e^{2\pi Q(x)}$. Then $D(f)$ and $x\mapsto R(x)f(x)$ are in $L^2(\R^{p+q}) \cap L^{1}(\R^{p+q})$ for every derivation $D$ of order at most $2$ and every polynomial $R$ of degree at most $2$.
\end{enumerate} 
Then the series 
\[y^{\frac{k}{2}}\sum_{\lambda\in \Lambda^{\vee}} p(\lambda\sqrt{y})q^{-Q(\lambda)}\mathfrak{e}_{[\lambda]}, \quad q=e^{2\pi i\tau},\]
converges absolutely and defines a non-holomorphic modular form of weight $k+\frac{p+q}{2}$ and representation $\overline{\rho}_\Lambda$.
\end{theorem}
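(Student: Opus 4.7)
The plan is to verify modularity by checking invariance under the generators $S$ and $T$ of $\mathrm{Mp}_2(\Z)$ recalled above, after which absolute convergence and modularity over the full metaplectic group follow.

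For $T\colon \tau\mapsto\tau+1$, the imaginary part $y$ is unchanged, so $p(\lambda\sqrt y)$ is unaffected, and the only effect is the phase $e^{-2\pi i Q(\lambda)}$ picked up by $q^{-Q(\lambda)}$. Since $Q$ descends to a $\mathbb{Q}/\mathbb{Z}$-valued form on $\Lambda^\vee/\Lambda$, this phase depends only on the class $[\lambda]$ and reproduces the action of $\overline\rho_\Lambda(T)$ given in \eqref{weil}. For absolute convergence, one writes the summand as $f(\lambda\sqrt y)\,e^{-2\pi i x Q(\lambda)}$ with $f(x):=p(x)e^{2\pi Q(x)}$; by hypothesis (2), $f\in L^1(\Lambda_\mathbb{R})$, and the standard comparison of a lattice-point sum of an $L^1$ function with its integral yields absolute convergence uniformly on compact subsets of $\mathbb{H}$.

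The essential step is $S$-invariance, which I would handle through Poisson summation. Split the sum over $\Lambda^\vee$ into cosets $\mu+\Lambda$ indexed by $\mu\in\Lambda^\vee/\Lambda$, and apply Poisson summation to the function
$$\phi_\tau(x) \;:=\; p(x\sqrt y)\,e^{-2\pi i \tau Q(x)},$$
with Fourier transform taken with respect to the bilinear form attached to $Q$. This reduces the $S$-transformation law to a single Fourier identity
$$\widehat{\phi_\tau}(\xi) \;=\; c_\Lambda\,(-i\tau)^{-k-(p+q)/2}\,\phi_{-1/\tau}(\xi),$$
where $c_\Lambda$ is the eighth root of unity depending only on the signature $(p,q)$ that appears in $\rho_\Lambda(S)$. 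Once this identity is in hand, reindexing $\nu\in\Lambda^\vee$ through the dual pairing produces exactly the $\overline\rho_\Lambda(S)$-twist, and the weight $k+(p+q)/2$ appears from the power of $\tau$, with the $y^{k/2}$ factor absorbed correctly on both sides of the equation.

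The main obstacle is the Fourier identity above, and condition (1) is exactly what drives it. The cleanest route is to check it first at $\tau=iy$ purely imaginary, where $\phi_{iy}(x)=f(x\sqrt y)$; the dilation rule for the Fourier transform then reduces matters to computing $\widehat f$, and the PDE $(E-\tfrac{1}{4\pi}\Delta)p = kp$ translates to an eigenvalue equation for $f$ under an operator that conjugates to its negative under Fourier transform, with the eigenvalue $k$ pinning down the correct power of $y$. Since both sides of the Fourier identity are holomorphic in $\tau\in\mathbb{H}$, with differentiation under the integral justified by the integrability bounds in hypothesis (2), equality for imaginary $\tau$ extends to all $\tau$ by analytic continuation. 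Combining this with the Poisson step gives the $S$-transformation law, and together with the $T$-invariance above, completes the verification of modularity with weight $k+(p+q)/2$ and representation $\overline\rho_\Lambda$.
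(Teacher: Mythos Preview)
The paper does not prove this theorem; it is quoted as Vign\'eras' criterion \cite[Th\'eor\`eme~1]{vigneras}, with only the remark that the vector-valued version follows from formulas (1) and (3) of that reference. So there is no ``paper's own proof'' to compare against, only the original literature.

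Your outline follows the standard strategy (and indeed Vign\'eras' own): $T$-invariance is immediate, and $S$-invariance is reduced via Poisson summation on each coset $\mu+\Lambda$ to a single Fourier identity for the summand, which is then derived from the differential equation in hypothesis~(1). That architecture is correct.

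There is, however, a genuine gap in your analytic continuation step. You claim that both sides of the Fourier identity
\[
\widehat{\phi_\tau}(\xi)=c_\Lambda\,(-i\tau)^{-k-(p+q)/2}\,\phi_{-1/\tau}(\xi)
\]
are holomorphic in $\tau\in\mathbb{H}$, so that checking it on the imaginary axis suffices. But your own definition $\phi_\tau(x)=p(x\sqrt{y})e^{-2\pi i\tau Q(x)}$ depends on $y=\mathrm{Im}(\tau)$ through $\sqrt{y}$, so neither side is holomorphic in $\tau$; the theta series in the theorem is explicitly a \emph{non-holomorphic} modular form. The identity on the imaginary axis therefore does not propagate. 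The correct way to close this is either to prove the Fourier identity directly for all $\tau$ (the differential equation $(E-\tfrac{1}{4\pi}\Delta)p=kp$ translates into a first-order ODE in $y$ for the family $\widehat{\phi_\tau}$, which one integrates), or to complexify by introducing two independent variables $(\tau_1,\tau_2)$ in place of $(\tau,\bar\tau)$, prove a holomorphic identity in those, and then specialize to $\tau_2=\bar\tau_1$. Vign\'eras' original argument proceeds via the explicit action of the Weil representation of $\mathrm{SL}_2(\mathbb{R})$ on Schwartz space, with condition~(1) identifying $f$ as a vector of the correct $K$-type.

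A smaller point: your convergence argument (``comparison of a lattice-point sum of an $L^1$ function with its integral'') is not sufficient as stated, since $f\in L^1$ alone does not control a lattice sum. You need the full strength of hypothesis~(2): the $L^1\cap L^2$ bounds on $f$, its derivatives, and its polynomial multiples combine (e.g.\ via Sobolev-type estimates) to give enough pointwise decay of $f$ for the sum over $\Lambda^\vee$ to converge.
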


Here $[\lambda]\in K^\vee/K$ denotes the class of $\lambda$ in the discriminant group.

\begin{remarque}
In \cite[Th\'eor\`eme 1]{vigneras}, the above theorem is stated for scalar modular forms by introducing a level and a character, but the vector-valued version follows easily from the proof, especially formulas (1) and (3) in {\it loc. cit.}.
\end{remarque}

\subsection{Hyperbolic lattices}\label{hyperboliclattices}
We specialize our discussion from before to hyperbolic lattices. Let $(K,\cdot)$ be an even lattice of signature  $(1,n-1)$. Let $C_K$ be a connected component of $\{x\in K_{\R}\,|\, x\cdot x>0\}$. Then for any $v_1$, $v_2$ in $C_K$, we have  $v_1\cdot v_2>0$.

Let $\mathcal{E}(x)=2\int_0^x e^{-\pi t^2}dt$ be the Gauss error
function and define $\psi(x)=x\mathcal{E}(x)+\frac{1}{2\pi}\mathcal{E}'(x)$. Notice that $\psi'=\mathcal E$ and one can easily check that $\psi$ satisfies the following differential equation: 
\begin{align}\label{eigenvalue}
    x\psi'(x)+\frac{1}{2\pi}\psi''(x)=\psi.
\end{align}

Moreover, we have 
\[\mathcal{E}(x)=\sgn(x)(1-2\int_{|x|}^\infty e^{-\pi t^2}dt),\]
and we can thus write:
\begin{align}\label{beta_decomposition}
    \psi(x)=x\mathcal{E}(x)+\frac{1}{2\pi}\mathcal{E}'(x)=|x|+\beta(x^2),
\end{align}
where for $t\geq 0$, \[\beta(t)=\frac{1}{2\pi}\int_1^{\infty}u^{-\frac{3}{2}}e^{-\pi tu}du.\]

The importance of the functions $\mathcal E$ and $\psi$ is that they are smooth approximations of the sign and the absolute value functions respectively, and they will help us build smooth approximations to the piecewise linear functions appearing in the intersection formulas of special cycles. 

For a collection of vectors $\{c_i\}_{i\in S}$ in the closure of the positive cone $C_K$ that satisfy $\sum_{i\in S} a_ic_i=0$, define the function $$p^+(\lambda):=\sum_{i\in S}a_i|\lambda\cdot c_i|$$ and consider the formal power series: 
\begin{align}\label{eq:main-power-series}
\Theta^+=\sum_{\lambda\in K^{\vee}}p^+(\lambda)q^{-Q(\lambda)}\mathfrak{e}_{[\lambda]}\in\C[K^{\vee}/K][[q^{\frac{1}{D_K}}]].
\end{align}
Our goal in the next two sections is to analyze the convergence and the modularity properties of this series. 

\subsection{Case of non-isotropic vectors}

Let $c\in C_K$ and define  
\[\phi_c(\lambda):=\sqrt{\frac{c\cdot c}{2}}\psi\left(\frac{\sqrt{2}\, \lambda\cdot c}{\sqrt{c\cdot c}}\right)\textrm{ for }\lambda\in K_\R. \]
Then using \Cref{eigenvalue}, one can check that: 
\begin{align}\label{Eq:eigen_value}
    \left(E-\tfrac{1}{4\pi}\Delta \right)\phi_c=\phi_c.
\end{align}

Now consider the special case where $(c_i)_{i\in S}$ is a finite set of primitive integral elements of the positive cone $C_K\cap K$, such that $\sum_{i\in S}a_ic_i=0$, with $a_i\in\Z$ non-zero integers. Let $c_i\cdot c_i=2N_i>0$. Define 
\begin{align*}
    p(\lambda):&=\sum_{i\in S}a_i\phi_{c_i}(\lambda)\\\numberthis \label{completion}
            &=\sum_{i\in S}a_i\sqrt{N_i}\psi\left(\frac{\lambda\cdot c_i}{\sqrt{N_i}}\right).
\end{align*}
\begin{proposition}\label{p:convergence}
Let $c_i$ be a collection of (non-isotropic) primitive integral vectors of $C_K$ which satisfy $\sum_{i}a_ic_i=0$ for some integers $a_i$. Then the function $p$ satisfies the conditions of \Cref{vigneras}.

\end{proposition}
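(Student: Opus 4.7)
The plan is to verify the two hypotheses of \Cref{vigneras} for $p$. The first is immediate from linearity: each summand $\phi_{c_i}$ is smooth on $K_\R$ and satisfies $(E-\tfrac{1}{4\pi}\Delta)\phi_{c_i}=\phi_{c_i}$ by \Cref{Eq:eigen_value}, so $p=\sum_i a_i\phi_{c_i}$ is a smooth eigenfunction with integer eigenvalue $k=1$. The bulk of the work is to show that $f(x):=p(x)e^{2\pi Q(x)}$, together with its derivatives $D(f)$ of order at most $2$ and polynomial multiples $R(x)f(x)$ of degree at most $2$, lies in $L^1(\R^n)\cap L^2(\R^n)$.

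I would split $K_\R$ by the sign of $Q$. On the spacelike region $\{Q<0\}$ the factor $e^{2\pi Q(\lambda)}$ is bounded above by $1$ and decays Gaussianly in the $(n-1)$ negative directions; since $\psi(x)=O(|x|+1)$, $\psi'(x)=E(x)=O(1)$, and $\psi''(x)=E'(x)=2e^{-\pi x^2}$, the function $p$ and its first two derivatives grow at most polynomially in $\lambda$, so $f$ is Schwartz on this region and remains so after multiplication by polynomials of bounded degree.

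The subtle region is the cone of positive norms $\pm C_K$, where $e^{2\pi Q(\lambda)}$ grows Gaussianly, and the key device is the cancellation $\sum_i a_ic_i=0$. By the reversed Cauchy--Schwarz inequality in signature $(1,n-1)$ applied to the timelike $c_i$, every $\lambda\in\overline{C_K}\setminus\{0\}$ satisfies $\lambda\cdot c_i>0$ for all $i$, with the opposite uniform sign on $-\overline{C_K}$. The decomposition $\psi(x)=|x|+\beta(x^2)$ from \Cref{beta_decomposition} then gives
\[
p(\lambda)=\sum_i a_i|\lambda\cdot c_i|+\sum_i a_i\sqrt{N_i}\,\beta\!\left(\frac{(\lambda\cdot c_i)^2}{N_i}\right)=\sum_i a_i\sqrt{N_i}\,\beta\!\left(\frac{(\lambda\cdot c_i)^2}{N_i}\right)
\]
on $\pm C_K$, since the piecewise linear part collapses as $\pm\lambda\cdot(\sum_i a_ic_i)=0$. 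Writing $\lambda=\alpha_ic_i+\mu_i$ with $\mu_i\in c_i^\perp$ yields the identity $(\lambda\cdot c_i)^2/N_i=4Q(\lambda)+4|\mu_i|^2$, where $|\mu_i|^2:=-Q(\mu_i)\ge 0$, and together with the uniform bound $\beta(t)\le\pi^{-1}e^{-\pi t}$ this produces
\[
|p(\lambda)|\,e^{2\pi Q(\lambda)}\lesssim\sum_i e^{-2\pi Q(\lambda)-4\pi|\mu_i|^2}=\sum_i e^{-2\pi N_i\alpha_i^2-2\pi|\mu_i|^2},
\]
each term of which is a product of independent Gaussians in $(\alpha_i,\mu_i)$ and thus integrable over $K_\R$. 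Derivations and polynomial multipliers are controlled by the same device: $\partial\phi_{c_i}$ contributes a factor $E(\lambda\cdot c_i/\sqrt{N_i})\,c_i$, which on $\pm C_K$ equals $\pm c_i$ up to an exponentially small error and so again inherits the cancellation $\sum_i a_ic_i=0$, while $\partial^2\phi_{c_i}$ carries the intrinsically Gaussian factor $E'(\lambda\cdot c_i/\sqrt{N_i})=2e^{-\pi(\lambda\cdot c_i)^2/N_i}$. The main obstacle I anticipate is the bookkeeping required to propagate these cancellations through all three layers—derivations, polynomial weights, and the Gaussian factor $e^{2\pi Q}$—and to verify that the resulting $L^1$ majorant also serves as an $L^2$ majorant; but no new analytic estimate beyond the identity $(\lambda\cdot c_i)^2/N_i=4Q(\lambda)+4|\mu_i|^2$ combined with reversed Cauchy--Schwarz should be required.
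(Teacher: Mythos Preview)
Your verification of condition~(1) and your estimate on the timelike cone $\pm C_K$ are correct and coincide with the paper's treatment of the $\beta$-piece $p^-=\sum_i a_i\sqrt{N_i}\,\beta\bigl((\lambda\cdot c_i)^2/N_i\bigr)$: your identity $(\lambda\cdot c_i)^2/N_i-4Q(\lambda)=4|\mu_i|^2$ is precisely the paper's observation that $\tfrac{(\lambda\cdot c_i)^2}{N_i}-2Q(\lambda)=2Q_{c_i}(\lambda)$ for the positive-definite majorant $Q_{c_i}$, and this bounds $|p^-|\,e^{2\pi Q}$ by a Gaussian \emph{globally}, not only on the cone.

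The gap is your handling of the spacelike region. The assertion that $f=p\,e^{2\pi Q}$ is Schwartz on $\{Q<0\}$ because $e^{2\pi Q}\le 1$ and $p$ grows at most linearly does not follow: the bound $e^{2\pi Q(\lambda)}\le 1$ gives no decay along directions asymptotic to the light cone, where $Q\to 0^-$ while $|\lambda|\to\infty$, and a generic linearly-growing function times $e^{2\pi Q}$ is not in $L^1(\{Q<0\})$---a Fubini computation in coordinates $(x_1,x')$ with $Q=\tfrac12(x_1^2-|x'|^2)$ leaves a divergent integral $\int |x_1|^{n-2}\,dx_1$. The actual reason $p\,e^{2\pi Q}$ is integrable is that the piecewise-linear part $p^+(\lambda)=\sum_i a_i|\lambda\cdot c_i|$ vanishes not only on $\pm\overline{C_K}$ but on the full open set where the forms $\lambda\cdot c_i$ share a sign, and this set contains a definite cone about the light cone. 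The paper makes this quantitative in \Cref{l:euclidean}: on the support of $p^+$ one has $-Q(\lambda)\ge \mu\,Q_{c_1}(\lambda)$ for some positive-definite $Q_{c_1}$, which supplies exactly the missing decay. The same device is needed again for the sign-function part $(\partial_v p)^+$ of the first derivative. Thus the effective decomposition is not spacelike versus timelike but $p=p^++p^-$ globally: $p^-$ is controlled everywhere by your Gaussian identity, and $p^+$ is controlled on its support by \Cref{l:euclidean}.
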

The proof of this proposition is inspired from the proof of Proposition 2.4 in \cite{zwegers}. We first prove the following lemma. 

\begin{lemma}\label{l:euclidean}
Let $(c_i)_{i\in S}$ be a finite set of distinct integral elements of $\overline{C}_K$ which satisfies a relation of the form $\sum_ia_ic_i=0,$ where $a_i$ are non-zero numbers and let $p^+(\lambda)=\sum_{i}a_i|\lambda\cdot c_i|$. Let $U$ denote the open set of elements $\lambda\in K_\R$ where the function $\lambda\mapsto p^+(\lambda)$ does not vanish. Then there exists an euclidean norm $||\cdot ||$ on $K_\R$ such that:
\begin{enumerate}
    \item If all the vectors $c_i$ are in $C_K$, then \[\forall \lambda\in U,\quad  -Q(\lambda)\geq ||\lambda||^2.\]
    \item In general, 
    \[\forall \lambda\in U\cap K,\quad  -Q(\lambda)\geq ||\lambda||.\]
\end{enumerate}

\end{lemma}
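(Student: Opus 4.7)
I would first observe that $p^{+}$ is piecewise linear and vanishes identically on any open chamber where the signs $\epsilon_i=\sgn(\lambda\cdot c_i)$ are all equal: on such a chamber $p^{+}(\lambda)=\pm\sum_i a_i(\lambda\cdot c_i)=0$ by the relation $\sum_i a_i c_i=0$. Consequently $U$ lies outside both the ``all positive'' and the ``all negative'' chambers, which contain $C_K$ and $-C_K$ respectively, and in particular $-Q\geq 0$ on $\overline{U}$. For Part (1), when all $c_i\in C_K$ are strictly timelike, an elementary Lorentzian computation gives $\lambda_0\cdot c>0$ for every nonzero future-null $\lambda_0$ and every $c\in C_K$; by continuity a full conical neighborhood of $\partial C_K\setminus\{0\}$ (and of $\partial(-C_K)\setminus\{0\}$) therefore lies in the ``all positive'' (resp.\ ``all negative'') chamber, hence outside $\overline{U}$. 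Since $U$ is scale-invariant, fixing any Euclidean norm $||\cdot||_0$ on $K_\R$, the compact set $\overline{U}\cap\{||\lambda||_0=1\}$ is disjoint from the null cone, so $-Q$ attains a positive minimum $\kappa>0$ there; scaling then yields $-Q(\lambda)\geq \kappa||\lambda||_0^{2}$ throughout $U$, and taking $||\lambda||^{2}:=\kappa||\lambda||_0^{2}$ proves Part (1).

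For Part (2), the above argument only fails in small angular neighborhoods of the isotropic rays $\R_{>0}c_{i_0}$. My plan is to split $U$ into a ``bulk'' piece $U^{\mathrm{bulk}}$ which stays a fixed angular distance from every such ray, plus finitely many cuspidal pieces $U^{(i_0)}$, one localized at each isotropic $c_{i_0}$. Part (1) applies to the bulk and gives $-Q(\lambda)\geq \kappa'||\lambda||_0^{2}\geq \kappa'||\lambda||_0$ once $||\lambda||_0\geq 1$. On a cuspidal piece near $c=c_{i_0}$, I would fix a rational Witt decomposition $K_\Q=\Q c\oplus\Q c^{*}\oplus M_\Q$ with $c\cdot c^{*}=1$, $Q(c^{*})=0$, and $M$ negative definite, and write $\lambda=sc+tc^{*}+\mu$ with $s,t\in\Q$ and $\mu\in M_\Q$; then $\lambda\cdot c=t$ and $-Q(\lambda)=\tfrac{1}{2}|\mu|^{2}-st$, where $|\mu|^{2}=-\mu\cdot\mu\geq 0$. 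The local chamber analysis at $\R c$ forces any $\lambda\in U$ near this ray to satisfy $t<0$ after appropriate orientation, while closeness to $\R_{>0}c$ automatically gives $s>0$; hence $-Q(\lambda)\geq -st=s|t|$. For $\lambda\in K$ the denominators of $s$ and $t$ are bounded by a fixed integer $N_0$ depending only on $K$, so $|t|\geq 1/N_0$ whenever $t\neq 0$; combined with $||\lambda||_0=O(s)$ on the cuspidal region, this yields $-Q(\lambda)\geq \kappa''||\lambda||_0$ on $U^{(i_0)}\cap K$. Assembling the bulk and cuspidal bounds and choosing the final Euclidean norm with a small enough constant will produce the $||\cdot||$ asserted in (2).

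The main obstacle I anticipate is the chamber-sign bookkeeping near the isotropic rays: verifying rigorously that lattice points of $U$ lying in a cuspidal region really do force $\lambda\cdot c_{i_0}$ to have the claimed sign, and that the resulting sign conventions are compatible across all cuspidal regions simultaneously. I would model this step closely on the proof of \cite[Proposition 2.4]{zwegers}, from which, as the authors note, the present lemma takes its inspiration.
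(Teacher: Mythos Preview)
Your argument is correct and takes a genuinely different route from the paper. The paper covers $U$ by the finitely many wedges $U_{(i,j)}=\{\lambda\cdot c_i\geq 0,\ \lambda\cdot c_j\leq 0\}$ and, on each, produces an explicit positive-definite majorant: when $c_1,c_2$ are both anisotropic it uses orthogonal projection and Cauchy--Schwarz to show $-Q(\lambda)\geq \mu\,Q_{c_1}(\lambda)$ for the explicit form $Q_{c_1}(\lambda)=-Q(\lambda_{c_1^\perp})+\tfrac{(\lambda\cdot c_1)^2}{2(c_1\cdot c_1)}$; when $c_1$ is isotropic it writes $\lambda=n_1c_1+n_2c_2+v$ and splits into the subcases $n_2\neq 0$ (where integrality gives $|n_2(c_1\cdot c_2)|\geq 1$) and $n_2=0$ (where a third vector $c_3$ with $\lambda\cdot c_3>0$ must be invoked). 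Your approach replaces all of this in Part~(1) by a single compactness argument on the unit sphere, after checking that every nonzero null direction lies in the interior of an ``all same sign'' chamber; for Part~(2) your angular bulk/cusp decomposition forces $t=\lambda\cdot c_{i_0}<0$ on each cuspidal piece, so the awkward $n_2=0$ subcase and the auxiliary $c_3$ never arise.

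Two small points. First, your bounded-denominator remark understates what you have: since $c_{i_0}\in K$ and $\lambda\in K$, one has $t=\lambda\cdot c_{i_0}\in\Z$, so $|t|\geq 1$ directly. Second, note that the paper's explicit majorants $Q_{c_i}$ are reused verbatim in the proof of the next proposition (the $L^1\cap L^2$ estimates for $p^-$ and its derivatives); your compactness argument does not supply these, but they can of course be written down independently of the lemma. The trade-off is that your proof is shorter and conceptually cleaner, while the paper's constructive bounds feed directly into the subsequent convergence estimates.
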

\begin{proof}
Notice that $S$ has cardinality at least $3$ and that whenever $\lambda\cdot c_i$ have the same sign for all $i\in S$, then the function $\lambda\mapsto p^+(\lambda)$ vanishes. Hence $U$ is contained in the set where the linear forms $\lambda\mapsto \lambda\cdot c_i$ don't have all the same sign. In particular, $U$ is a finite union over $(i,j)$ of the sets $U_{(i,j)}$ where $\lambda\cdot c_i\geq 0$ and $\lambda\cdot c_j\leq 0$. So it suffices to prove the lemma for each $U_{(i,j)}$. Fix such pair and call it $(1,2)$. 

{\bf First case:} Assume that $c_1$ and $c_2$ are not isotropic. Write \begin{align*} \lambda&=\lambda_{c_1^{\bot}}+\frac{\lambda\cdot c_1}{c_1\cdot c_1}c_1. \textrm{ Then} \\
0\geq \lambda\cdot c_2&=\lambda_{c_1^{\bot}}\cdot c_2+\frac{\lambda\cdot c_1}{c_1\cdot c_1}c_1\cdot c_2\implies \\
& \hspace{-15pt}\frac{(\lambda\cdot c_1)(c_1\cdot c_2)}{c_1\cdot c_1}
\leq -\lambda_{c_1^{\bot}}\cdot c_2.\end{align*}

Let $\widetilde{c}_2$ be the orthogonal projection of $c_2$ to $c_1^{\bot}$. By Cauchy--Schwarz inequality, we have \[(\lambda_{c_1^{\bot}}\cdot c_2)^2=(\lambda_{c_1^{\bot}}\cdot \widetilde{c}_2)^2\leq (\lambda_{c_1^{\bot}}\cdot \lambda_{c_1^{\bot}})(\widetilde{c}_2\cdot \widetilde{c}_2).\]

Hence we get 
\begin{align*}
    \frac{(\lambda\cdot c_1)^2}{|c_1\cdot c_1|}&\leq \frac{|c_1\cdot c_1|}{(c_1\cdot c_2)^2}(\lambda_{c_1^{\bot}}\cdot \lambda_{c_1^{\bot}})\left(c_2\cdot c_2-\frac{(c_1\cdot c_2)^2}{c_1\cdot c_1}\right)\\
    &= \left(1-\frac{(c_2\cdot c_2)(c_1\cdot c_1)}{(c_1\cdot c_2)^2}\right)|\lambda_{c_1^{\bot}}\cdot\lambda_{c_1^{\bot}}|.
\end{align*}
Let $\mu>0$ such that $\frac{1-\mu}{1+\mu}=1-\frac{(c_2\cdot c_2)(c_1\cdot c_1)}{(c_1\cdot c_2)^2}$. Then one can check that 

\begin{align}\label{norm-square}
-Q(\lambda)\geq \mu Q_{c_1}(\lambda),
\end{align} where $Q_{c_1}$ is a positive definite quadratic form defined by: \[Q_{c_1}(\lambda):=-Q(\lambda_{c_1^{\bot}})+\frac{(\lambda\cdot c_1)^2}{2(c_1\cdot c_1)}.\]

{\bf Second case:} Assume now that $c_1$ is isotropic and $c_2$ and is still anisotropic. The lattice $(c_1,c_2)$ has signature $(1,1)$ so its orthogonal $M\subset K$ is negative definite.
We can write \[\lambda=n_1c_1+n_2c_2+v,\] where $v\in M^\vee$. Then by assumption $\lambda\cdot c_1=n_2(c_1\cdot c_2)\geq 0$ which implies that $n_2\geq 0$ and $\lambda\cdot c_2=n_1(c_1\cdot c_2)+n_2(c_2\cdot c_2)\leq 0$. So 
\[n_2(c_2\cdot c_2)\leq -n_1(c_1\cdot c_2),\]
and in particular, $n_1\leq 0$. 

On the other hand, 
\begin{align*}-Q(\lambda)&=-n_1n_2(c_1\cdot c_2)-n_2^{2}\frac{c_2\cdot c_2}{2}-Q(v)\\
&= -\frac{n_1n_2(c_1\cdot c_2)}{2}-n_2\frac{n_1(c_1\cdot c_2)+n_2(c_2\cdot c_2)}{2}-Q(v)\\
&\geq  \frac{n_2^2|c_2\cdot c_2|}{2}-Q(v).
\end{align*}
If $n_2\neq 0$, then since $\lambda$ and $c_1$ are in $K$, we have that $|n_2(c_1\cdot c_2)|\geq 1$\footnote{If we assume that $c_i$ are rational, then we have to modify $1$ by a constant here and the rest of proof holds.}, hence we get also $-Q(\lambda)\geq \frac{|n_1|}{2}$ and therefore 
\[-3Q(\lambda)\geq |n_1|+n_2^2|c_2\cdot c_2|-Q(v)\gg ||\lambda||\]
for the euclidean norm $||\lambda||^2=n_1^2+n_2^2-Q(v)$. 

If $n_2=0$, then there exists $c_3$ such that $\lambda\cdot c_3>0$, otherwise all $\lambda\cdot c_i$ would have the same sign for $i\neq 1$ and hence \[p(\lambda)=2a_1|\lambda\cdot c_1|=2a_1|n_2(c_1\cdot c_2)|,\] contradicting the fact that $p$ does not vanish on $\lambda$. From the inequality $\lambda\cdot c_3>0$ we get then: 
\[n_1(c_1\cdot c_3)+n_2(c_2\cdot c_3)+\widetilde{c_3}\cdot v>0,\]
where $\widetilde{c_3}$ is the projection of $c_3$ to  $M$. From Cauchy-Schwartz inequality, we have: 
\[|n_1(c_1\cdot c_3)|\leq |n_2(c_2\cdot c_3)|+\sqrt{(v\cdot v)(\widetilde{c_3}\cdot \widetilde{c_3})},\]
from which it follows that there exists a constant $\mu_1>0$ such that 
\[-Q(\lambda)\geq \mu_1\left(n_1^2+n_2^2-Q(v)\right),\]
and we can choose again the euclidean norm $||\lambda||^2=n_1^2+n_2^2-Q(v)$.

{\bf Last case:} Assume now that both $c_1$ and $c_2$ are isotropic. Then we can introduce $c_2'=c_1+c_2$ which is not isotropic and notice that $U_{1,2}$ is contained in $U_{1,2'}\cup U_{2',2}$ and hence we use the previous analysis on each of them. This finishes the proof of the lemma.
\end{proof}

We are now ready to prove Proposition \ref{p:convergence}. 
\begin{proof}[Proof \Cref{p:convergence}]
Condition (1) follows from Equation \ref{Eq:eigen_value}. We only need to prove condition (2). By Equation \ref{beta_decomposition}, we have the decomposition for $x\in\R$ 
\[\psi(x)=|x|+\beta(x^2).\]
This corresponds to a decomposition
\[p(\lambda)=p^{+}(\lambda)+p^{-}(\lambda),\] 
where $p^{+}(\lambda)=\sum_{i}a_i|\lambda\cdot c_i|$. From the estimate $\beta(t)\ll e^{-\pi t }$, for $t\geq 0$, we get that 
\begin{align*}
    |p^{-}(\lambda)e^{\pi(\lambda\cdot\lambda)}|&\ll\sum_{i\in S} e^{-\pi\big[-\lambda\cdot\lambda+2\tfrac{(\lambda\cdot c_i)^2}{c_i\cdot c_i}\big]}\\
    &\ll \sum_{i\in S} e^{-2\pi Q_{c_i}(\lambda)},
\end{align*}
where as before, $Q_c(\lambda)=-Q(\lambda_{c^{\bot}}) +\tfrac{(\lambda\cdot c)^2}{2(c\cdot c)}$ is the positive quadratic form associated to a positive integral vector $c$.

The function $p^+$ vanishes if all the linear forms $\lambda\mapsto \lambda\cdot c_i$ have the same sign when $i\in S$. Hence, in the region where it is non-vanishing, by (1) in \Cref{l:euclidean}, there exists an Euclidean norm on $K_\R$ such that: 
\[|p^{+}(\lambda)e^{\pi(\lambda\cdot\lambda)}|\leq |p^{+}(\lambda)e^{-\pi||\lambda||^2}|.\]
Combining the two previous estimates, this proves that for any polynomial $R$, the function $\lambda\mapsto R(\lambda)p(\lambda)e^{\pi(\lambda\cdot\lambda)}$ is in $L^2(\R^{n}) \cap L^{1}(\R^{n})$.

For the first order derivatives, notice that 
\[\psi'(x)=\mathcal{E}(x)={\rm sgn}(x)-{\rm sgn}(x)\cdot 2\int_{|x|}^{\infty}e^{-\pi t^2}dt.\] 

For any non-zero vector $v\in K_\R$, the  partial derivative $\partial_{v}p$ can then be decomposed accordingly as \[(\partial_{v}p)^+(\lambda)+(\partial_{v}p)^{-}(\lambda).\]

One can easily check that the function $(\partial_{v}p)^+$ vanishes on the region where all the $\lambda\mapsto \lambda\cdot c_i$ have the same sign. Furthermore, we have the estimate 
\[\int_{|x|}^{\infty}e^{-\pi t^2}dt\ll e^{-\pi x^2},\]
which implies in a similar way that

\begin{align*}
    |(\partial_{v}p)^{-}(\lambda)e^{\pi(\lambda\cdot\lambda)}|\ll
     \sum_{i\in S} e^{-2\pi Q_{c_i}(\lambda)}.
\end{align*}
For the second order derivative, since $\psi''(x)=e^{-\pi x^2}$, we get immediately for any two non-zero vectors $v,v'\in K_{\R}$ the estimate
\begin{align*}
    |\partial_v\partial_{v'}p(\lambda)e^{-\pi(\lambda\cdot\lambda)}|\ll\sum_{i\in S} e^{-\pi\big[\lambda\cdot\lambda-2\tfrac{(\lambda\cdot c_i)^2}{c_i\cdot c_i}\big]}\\
    \leq \sum_{i\in S} e^{-2\pi Q_{c_i}(\lambda)}.
\end{align*}
Hence the result. 
\end{proof}

As it appeared in the previous proof, the function $p$ is a smooth approximation of the function $p^{+}(\lambda)=   \sum_{i\in S}a_i|\lambda\cdot c_i|$.
By \Cref{vigneras} the series
\begin{align*}
    \Theta(\tau)&=\frac{1}{\sqrt{y}}\sum_{\lambda\in K^{\vee}}p(\sqrt{y}\lambda)q^{-Q(\lambda)}\mathfrak{e}_{[\lambda]}\\
    &=\frac{1}{\sqrt{y}}\sum_{\gamma\in K^\vee/K}\sum_{\lambda\in \gamma+K}p(\sqrt{y}\lambda)q^{-Q(\lambda)}\mathfrak{e}_\gamma
\end{align*}
transforms like a non-holomorphic modular form of weight $1+\frac{n}{2}$ and representation $\overline{\rho}_K$, where $(\mathfrak{e}_\gamma)_{\gamma\in K^\vee/K}$ is the usual basis of the $\C$-vector space  $\C[K^\vee/K]$.
\medskip

The holomorphic part of this series is equal to the series introduced in \Cref{eq:main-power-series}: 
\[\Theta^{+}(\tau)=\sum_{\lambda \in K^\vee} p^{+}(\lambda)q^{-Q(\lambda)}\mathfrak{e}_{[\lambda]}.\]
The non-holomorphic part is equal to

\begin{align*}
\sum_{i\in S}a_i \Theta^{c_i}_K(\tau),
\end{align*}
where $c_i\cdot c_i=2N_i$ and the series $\Theta^{c_{i}}_K$ is an absolutely convergent series given as follows: 

\begin{align*}
    \Theta^{c_i}_K(\tau)&=\frac{\sqrt{N_i}}{\sqrt{y}}\sum_{\lambda\in K^\vee}\beta\left(\frac{y(\lambda\cdot c_i)^2}{N_i}\right)q^{-Q(\lambda)}\mathfrak{e}_{[\lambda]}\\
    &=\frac{\sqrt{N_i}}{\sqrt{y}}\uparrow_{c_i^\perp\oplus \Z c_i}^{K}\Bigg(\sum_{\substack{\lambda=\delta+\tfrac{n}{2N_i}c_i  \\ \delta\in (c_i^\perp)^\vee,\, n\in \Z}}\beta\left(\frac{yn^2}{N_i}\right)q^{-\tfrac{n^2}{4N_i^2}Q(c_i)-Q(\delta)}\mathfrak{e}_{[\delta]}\otimes \mathfrak{e}_{[n]}\Bigg)\\
    &=\frac{\sqrt{N_i}}{\sqrt{y}}\uparrow_{c_i^\perp\oplus \Z c_i}^{K}\Bigg[\Bigg(\sum_{\delta\in (c_i^\perp)^\vee} q^{-Q(\delta)}\mathfrak{e}_{[\delta]} \Bigg)\otimes \Bigg(\sum_{n\in \Z}  
    \beta\left(\frac{yn^2}{N_i}\right)q^{-\tfrac{n^2}{4N_i}}\mathfrak{e}_{[n]}\Bigg)\Bigg] \\
    &=\uparrow_{c_i^\perp\oplus \Z c_i}^{K}\left(\Theta_{c_i^{\bot}}\otimes 
    F_{N_i}^-\right)
\end{align*}
Here $\Theta_{c_i^{\bot}}$ is the theta series of the negative definite lattice $c_i^\perp$ which is a holomorphic modular form of weight $\frac{n-1}{2}$ and representation $\overline{\rho}_{c_i^\perp}$ and $F_{N_i}^-$ is the non-holomorphic part of the generalized Zagier Eisenstein series introduced in \Cref{s:zagier}. 
In particular, the non-holomorphic part of 
\[\Theta-\sum_{i\in S}a_i\uparrow_{c_i^\perp\oplus \Z c_i}^{K}(\Theta_{c_i^{\bot}}\otimes F_{N_i}).\]
vanishes.
Therefore, it is a (weakly) holomorphic modular form of weight $1+\frac{n}{2}$ and representation $\overline{\rho}_K$.
We have thus proven the following theorem.

\begin{theorem}\label{completion-type-III}
Let $(c_i)_{i\in S}$ be a finite set of integral vectors of $C_K$ with a relation $\sum_{i\in S}a_ic_i=0$, $a_i\in\Z$. Then the function $\Theta^{+}$ is a mixed harmonic Maass form of weight $1+\frac{n}{2}$ and representation $\overline{\rho}_K$. More precisely, the function 
\[\Theta^+-\sum_{i\in S}a_i\uparrow_{c_i^\perp\oplus \Z c_i}^{K}(\Theta_{c_i^{\bot}}\otimes F^+_{N_i}).\]
is a (weakly) holomorphic modular form of weight $1+\frac{n}{2}$ and representation $\overline{\rho}_K$. 
\end{theorem}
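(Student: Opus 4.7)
The strategy is to construct a smooth non-holomorphic completion of $\Theta^+$ using the function $\psi$ from the preceding discussion, show that this completion is a genuine (non-holomorphic) vector-valued modular form via Vignéras' criterion, and then identify its non-holomorphic part with a sum of Zagier Eisenstein-series contributions so that the desired holomorphic modular form appears.

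First, I would form the candidate completion
\[
\Theta(\tau) \;=\; \frac{1}{\sqrt{y}}\sum_{\lambda \in K^\vee} p(\sqrt{y}\,\lambda)\, q^{-Q(\lambda)}\, v_{[\lambda]},
\]
where $p(\lambda) = \sum_{i\in S} a_i \sqrt{N_i}\,\psi(\lambda\cdot c_i/\sqrt{N_i})$ is the smooth replacement for $p^+$ introduced in \Cref{completion}. Because $\psi$ satisfies the scalar eigenvalue equation \Cref{eigenvalue} and each $\phi_{c_i}$ satisfies \Cref{Eq:eigen_value}, linearity gives $(E-\tfrac{1}{4\pi}\Delta)p = p$, which is condition (1) of \Cref{vigneras} with $k=1$. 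Condition (2), namely the required $L^1\cap L^2$ decay for $p(\lambda)e^{2\pi Q(\lambda)}$ and its derivatives up to order two (multiplied by polynomials), is exactly \Cref{p:convergence}. Hence Vignéras' theorem guarantees that $\Theta$ transforms as a non-holomorphic vector-valued modular form of weight $1+\tfrac{n}{2}$ and representation $\overline{\rho}_K$.

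Next I would separate the holomorphic and non-holomorphic parts of $\Theta$ using the decomposition $\psi(x)=|x|+\beta(x^2)$ from \Cref{beta_decomposition}. The $|x|$-part yields, after the substitution $\lambda\mapsto \sqrt{y}\lambda$ and cancellation of the prefactor $1/\sqrt{y}$, exactly
\[
\Theta^+(\tau) \;=\; \sum_{\lambda\in K^\vee} p^+(\lambda)\, q^{-Q(\lambda)}\, v_{[\lambda]},
\]
which is the series in the theorem. The $\beta$-part contributes, for each $i\in S$, the series $a_i\,\Theta^{c_i}_K(\tau)$. By splitting $\lambda\in K^\vee$ according to its component along $c_i$ and its orthogonal component in $(c_i^\perp)^\vee$, and recognizing the theta series of $c_i^\perp$ and the non-holomorphic part $F_{N_i}^-$ of the Zagier Eisenstein series from \Cref{s:zagier}, this contribution rewrites as $a_i\, p_{c_i^\perp\oplus \Z c_i}^{K}(\Theta_{c_i^\perp}\otimes F_{N_i}^-)$, using the push operator of \Cref{d:pull-push-isotropic}. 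The crucial calculation here is matching the coset decomposition of $K^\vee$ with the lattice $c_i^\perp \oplus \Z c_i$ and its image in $K^\vee/K$; this is essentially the computation already sketched in the text preceding the theorem.

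Finally I would subtract off the full mixed Eisenstein contribution and observe that
\[
\Theta^+ - \sum_{i\in S} a_i\, p_{c_i^\perp\oplus \Z c_i}^{K}\bigl(\Theta_{c_i^\perp}\otimes F_{N_i}^+\bigr) \;=\; \Theta - \sum_{i\in S} a_i\, p_{c_i^\perp\oplus \Z c_i}^{K}\bigl(\Theta_{c_i^\perp}\otimes F_{N_i}\bigr),
\]
since $F_{N_i}=F_{N_i}^++F_{N_i}^-$. The right-hand side is a difference of non-holomorphic modular forms of weight $1+\tfrac{n}{2}$ and representation $\overline{\rho}_K$ (the second term is modular because $p_{c_i^\perp\oplus \Z c_i}^K$ intertwines the Weil representations and $\Theta_{c_i^\perp}\otimes F_{N_i}$ is modular of the right weight for $\overline{\rho}_{c_i^\perp}\otimes \overline{\rho}_{\Z c_i}$), while its non-holomorphic part vanishes by construction. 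A weak harmonic Maass form with vanishing non-holomorphic part is weakly holomorphic, giving the claim. The main technical obstacle is really the input \Cref{p:convergence}; once that is in hand the rest is a bookkeeping exercise in splitting the lattice $K^\vee$ along each ray $\R_{\geq 0}c_i$ and matching the resulting series with the known shapes of $\Theta_{c_i^\perp}$ and $F_{N_i}$.
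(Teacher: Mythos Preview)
Your proposal is correct and follows essentially the same approach as the paper: build the Vign\'eras completion $\Theta$ from $p$, invoke \Cref{p:convergence} to verify the hypotheses of \Cref{vigneras}, split via $\psi(x)=|x|+\beta(x^2)$, and identify the $\beta$-part with $\sum_i a_i\,p_{c_i^\perp\oplus \Z c_i}^K(\Theta_{c_i^\perp}\otimes F_{N_i}^-)$ so that subtracting the full $F_{N_i}$ contributions leaves a form with vanishing non-holomorphic part. The only cosmetic difference is that you phrase the conclusion as ``weakly holomorphic'' rather than ``holomorphic''; the paper likewise does not spell out the growth at the cusp, which is implicit in the estimates of \Cref{p:convergence} and \Cref{l:euclidean}.
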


\subsection{Case of isotropic vectors}
We now analyze the general case where the vectors $c_i$ are allowed to be isotropic. For each isotropic vector $c_i$, let $M_i:=c_i^\bot/c_i$ be the associated negative definite lattice.

We can write \[\sum_{i\in S_1}a_ic_i=-\sum_{i\in S_2}a_i c_i,\]
where $S_1$ is the set of indices where $a_i>0$ and $S_2$ is the set of indices where $a_i<0$. Define $c:=\sum_{i\in S_1} a_ic_i$, so that $c$ is an integral vector in the positive cone $C_K$. 

For $\epsilon>0$, define $$c_i(\epsilon):=\twopartdef{c_i+\epsilon \frac{c}{\sum_{\ell\in S_1} a_\ell}}{i\in S_1}{c_i-\epsilon \frac{c}{\sum_{\ell\in S_2} a_\ell}}{i\in S_2.}$$ Notice that the relation $\sum_ia_ic_i(\epsilon)=0$ still holds and for $\epsilon$ small enough, the vectors $c_i(\epsilon)$ all lie in the positive cone $C_K$.



For $\lambda\in K_\R$, let $p^+(\lambda)=\sum_{i}a_i|\lambda\cdot c_i|$ and consider again the series: 
\[\Theta^+=\sum_{\lambda\in K^{\vee}}p^{+}(\lambda)q^{-Q(\lambda)}\mathfrak{e}_{[\lambda]}\in\C[K^{\vee}/K][[q^{\frac{1}{D_K}}]].\]

Let $E_2(\tau)=1-24\sum_{n\geq 1}\sigma_1(n)q^n$. It is a quasi-modular form that admits the completion $E_2^*(\tau)=E_2(\tau)-\frac{3}{\pi y}$ into a non-holomorphic modular form of weight $2$ with respect to $\mathrm{SL}_{2}(\Z)$. Our goal in this section is to prove the following theorem. 
\begin{theorem}\label{typeII+typeIII}
Let $(c_i)_{i\in S}$ be a finite set of integral vectors of $\overline{C}_K$ with a relation $\sum_{i}a_ic_i=0$, $a_i\in\Z$. Then the series $\Theta^+$ converges absolutely for $\tau\in \mathbb H$ where $q=e^{2 \pi i\tau}$ to a holomorphic function which is a mixed mock modular form. More precisely, the function 
    \[ \Theta^+-\sum_{Q(c_i)>0} a_i \uparrow_{c_i^\perp \oplus \Z c_i}^{K}(\Theta_{c_i^{\bot}}\otimes F^+_{N_i})-\frac{1}{6}\sum_{Q(c_i)=0}a_iE_2 \uparrow_{D}^{K}\big( \Theta_{c_i^{\bot}/c_i}\big)\]
    is a (weakly) holomorphic modular form of weight $1+\frac{n}{2}$ and representation $\overline{\rho}_K$. 
\end{theorem}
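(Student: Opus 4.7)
The plan is to perturb the isotropic vectors into the interior of the positive cone and reduce to Theorem~\ref{completion-type-III}. For small $\epsilon>0$ the perturbed vectors $c_i(\epsilon)$ defined just above the statement lie in $C_K$ and still satisfy $\sum_i a_i c_i(\epsilon)=0$. Setting $p^+_\epsilon(\lambda) := \sum_i a_i|\lambda\cdot c_i(\epsilon)|$ and $\Theta^+_\epsilon := \sum_{\lambda\in K^\vee}p^+_\epsilon(\lambda)q^{-Q(\lambda)}v_{[\lambda]}$, the proof of Proposition~\ref{p:convergence} goes through for the family $\{c_i(\epsilon)\}$ using only part~(1) of Lemma~\ref{l:euclidean}, since no integrality of the $c_i(\epsilon)$ is required once all vectors lie strictly in $C_K$. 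Theorem~\ref{completion-type-III} then produces a holomorphic modular form
\[
\Theta^+_\epsilon - \sum_i a_i\, p^K_{c_i(\epsilon)^\perp\oplus \Z c_i(\epsilon)}\bigl(\Theta_{c_i(\epsilon)^\perp}\otimes F^+_{N_i(\epsilon)}\bigr)
\]
of weight $1+n/2$ and representation $\overline{\rho}_K$ for every sufficiently small $\epsilon$.

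Next I would establish absolute convergence of the unperturbed $\Theta^+$ and justify passage to the limit $\epsilon\to 0^+$. Part~(2) of Lemma~\ref{l:euclidean} gives $-Q(\lambda)\geq \|\lambda\|$ for $\lambda\in K^\vee$ in the support of $p^+$, which yields the exponential decay needed for absolute convergence of $\Theta^+$ on $\H$. A uniform version of this bound, valid for $\epsilon$ in a neighborhood of $0$, together with the pointwise convergence $p^+_\epsilon\to p^+$, gives $\Theta^+_\epsilon\to\Theta^+$ by dominated convergence. For each $i$ with $Q(c_i)>0$ the corresponding correction term depends continuously on $\epsilon$ and converges to the expected limit $a_i\, p^K_{c_i^\perp\oplus \Z c_i}(\Theta_{c_i^\perp}\otimes F^+_{N_i})$, so these pieces present no difficulty.

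The hardest step, and the crux of the argument, will be to identify the $\epsilon\to 0^+$ limit of the correction term at an isotropic $c_i$, where $N_i(\epsilon)\to 0$. The plan is to factor the tensor product $\Theta_{c_i(\epsilon)^\perp}\otimes F_{N_i(\epsilon)}$ along the rank one direction through $c_i(\epsilon)$, apply Poisson summation in that direction, and extract the singular behavior of $F_{N_i(\epsilon)}$ as $N\to 0$. The $\sqrt{N}/\sqrt{y}$ prefactor of the non-holomorphic part of $F_N$ should, after this Poisson transform, combine with the residue of the isotropic summation to produce the quasi-modular anomaly $-3/(\pi y)$ of $E_2^\ast = E_2 - 3/(\pi y)$, while the Hurwitz class numbers $H(D,r)$ in the holomorphic part of $F_N$ should degenerate, as $N\to 0$, to the divisor sum Fourier coefficients of $E_2$ up to an overall factor of $-\tfrac{1}{6}$. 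The net effect is that the isotropic correction converges to $-\tfrac{1}{6}\,E_2(\tau)\cdot p^K_{M_i}(\Theta_{M_i})$ modulo a holomorphic modular term, where $M_i = c_i^\perp/c_i$. Adding back $\tfrac{1}{6}\sum_{Q(c_i)=0}a_i E_2(\tau)\cdot p^K_{M_i}(\Theta_{M_i})$ exactly cancels this contribution and yields a genuine holomorphic modular form of weight $1+n/2$ and representation $\overline{\rho}_K$. The main obstacle is carrying out this $N\to 0$ asymptotic analysis cleanly and simultaneously matching both the $1/y$ singularity and the constant term of $E_2$ with the expected factor.
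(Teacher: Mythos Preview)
Your perturbation-and-limit strategy is exactly the one the paper uses, but there is a genuine technical gap in how you package the $\epsilon$-dependent correction terms. You propose to invoke Theorem~\ref{completion-type-III} for the perturbed vectors $c_i(\epsilon)$ and thereby obtain the holomorphic modular form
\[
\Theta^+_\epsilon - \sum_i a_i\, p^K_{c_i(\epsilon)^\perp\oplus \Z c_i(\epsilon)}\bigl(\Theta_{c_i(\epsilon)^\perp}\otimes F^+_{N_i(\epsilon)}\bigr).
\]
But the $c_i(\epsilon)$ are not integral vectors of $K$, so $\Z c_i(\epsilon)$ is not a sublattice and the operator $p^K_{c_i(\epsilon)^\perp\oplus \Z c_i(\epsilon)}$ is undefined. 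More seriously, $F_N$ is only constructed for $N\in\Z_{\geq 1}$ (its holomorphic part is built from the class numbers $H(D,r)$ attached to $\Gamma_0(N)$), so $F^+_{N_i(\epsilon)}$ has no meaning for generic $\epsilon$, and your plan to ``degenerate the Hurwitz class numbers as $N\to 0$ to divisor sums'' is not something you can carry out within the framework set up here.

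The paper sidesteps this entirely: instead of subtracting an $\epsilon$-dependent holomorphic correction, it works with the full non-holomorphic completion $\widehat{\Theta}_\epsilon$, whose modular transformation law comes directly from Vign\'eras' criterion and requires no integrality of the $c_i(\epsilon)$. The non-holomorphic part of $\widehat{\Theta}_\epsilon$ is the concrete, lattice-independent series
\[
\Theta^{c_i(\epsilon)}_K(\tau)=\sqrt{\tfrac{N_i(\epsilon)}{y}}\sum_{\lambda\in K^\vee}\beta\!\left(\tfrac{y(\lambda\cdot c_i(\epsilon))^2}{N_i(\epsilon)}\right)q^{-Q(\lambda)}v_{[\lambda]},
\]
and the limit $\epsilon\to 0$ is computed on this expression directly, by splitting $K^\vee$ along the fixed integral sublattice $U=\Z c_i\oplus\Z c$ and applying Poisson summation there. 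The $\epsilon$-independent corrections $p^K(\Theta_{c_i^\perp}\otimes F_{N_i})$ and $\tfrac{1}{6}E_2^*\, p^K_{M_i}(\Theta_{M_i})$ are then introduced only for the original integral $c_i$, and one checks that their non-holomorphic parts exactly match the limits just computed. Your proposal becomes correct once you replace the $\epsilon$-dependent holomorphic difference by the $\epsilon$-dependent non-holomorphic completion and carry out the limit on the $\beta$-series rather than on $F_N$.
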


\begin{proof}
We first prove the convergence of $\Theta^+$: by \Cref{l:euclidean}, there exists an euclidean norm $||\cdot||$ such that in the region $U\cap K^{\vee}$ where $\lambda\mapsto p(\lambda)$ does not vanish, we have $-Q(\lambda)\geq ||\lambda||$.
So for every $m\in \frac{1}{D_K}\Z$, the number of vectors $\lambda\in K^{\vee}$ such that $|Q(\lambda)|\leq m$ is finite and upper bounded by a polynomial in $m$. This implies the uniform absolute convergence of $\Theta^+$ on compact subsets of $\mathbb H$ to a holomorphic function on $\H$. 

To obtain the mixed mock modularity, we approach $\Theta^+$ with a sequence of mixed mock modular theta series. Let $\epsilon>0$ be small enough so that all the vectors $c_i(\epsilon)$ are in the positive cone $C_K$. 

Consider the holomorphic function on $\mathbb{H}$:

\[\Theta_\epsilon(\tau)=\sum_{\lambda\in K^{\vee}}p_\epsilon^{+}(\lambda)q^{-Q(\lambda)}\mathfrak{e}_{[\lambda]}\in\C[K^{\vee}/K][[q^{\frac{1}{D_K}}]]\]
where $p_\epsilon^{+}(\lambda):=\sum_{i}a_i|\lambda\cdot c_i(\epsilon)|.$
The above series is absolutely convergent and uniformly convergent on compact subsets of $\mathbb H$. Notice also that, as $\epsilon\rightarrow 0$, $p_\epsilon(\lambda)\rightarrow p(\lambda)$ for every $\lambda\in K^{\vee}$. 

\begin{lemma}\label{l:holo-part}
    As $\epsilon \rightarrow 0$, $\Theta^{+}_{\epsilon}(\tau)\rightarrow \Theta^+(\tau)$ uniformly in $\tau$ in compact regions of $\H$.
\end{lemma}
\begin{proof}

Note that in the region where $(\lambda\mapsto\lambda\cdot c_i)_{i\in S}$
all have the same sign, the functions $p^+$ and $p_\epsilon$ both vanish. Indeed, if all the linear forms $\lambda\mapsto \lambda\cdot c_i$ have the same sign, then $\lambda\mapsto \lambda\cdot c$ also has the same sign as them, and hence for any $\epsilon>0$, the linear forms $\lambda\mapsto\lambda\cdot c_i(\epsilon)$ have the same sign. Hence the function $p^+_\epsilon$ vanishes.  Let $U$ be the complement of the above region. Then, using \Cref{l:euclidean}, we have for any $\tau \in \mathbb{H}$ with $\mathrm{Im}(y)>B$: 
\begin{align*}
\left|\left|\Theta^{+}_\epsilon(\tau)-\Theta^+(\tau)\right|\right|&\leq \sum_{\lambda\in K^{\vee}\cap\,U}\sum_{i\in S} |a_i|\cdot\bigg||\lambda\cdot c_i|-|\lambda\cdot c_i(\epsilon)|\bigg|e^{2\pi y Q(\lambda)}\\
&\leq \sum_{\lambda\in K^{\vee}\cap\,U}\sum_{i\in S} |a_i|\cdot|\lambda\cdot c_i-\lambda\cdot c_i(\epsilon)|e^{2\pi y Q(\lambda)}\\
&\ll \epsilon \!\!\sum_{\lambda\in K^{\vee}\cap \,U}\sum_{i\in S} |a_i|\cdot|\lambda\cdot c|e^{-2\pi B ||\lambda||}~.
\end{align*}
    Hence the result. 
\end{proof}




Let $p_\epsilon$ be the completion of $p^{+}_\epsilon$ as in \Cref{completion}. Then by \Cref{p:convergence}\footnote{We proved this proposition when the vectors $c_i$ are integral vectors $c_i$ but the proof holds also when they are rational, as we can assume $\epsilon$ to be a rational number.} and \Cref{vigneras}, the function: 

\[\widehat{\Theta}_\epsilon(\tau)=\frac{1}{\sqrt{y}}\sum_{\lambda\in K^{\vee}}p_{\epsilon}(\sqrt{\lambda})q^{-Q(\lambda)}\mathfrak{e}_{[\lambda]}\]
transforms like a non-holomorphic modular form of weight $1+\frac{n}{2}$ and representation $\overline{\rho}_K$. As $\epsilon \rightarrow 0$, its holomorphic part converges to $\Theta^+$ by \Cref{l:holo-part} and we will analyze the convergence of its non-holomorphic part. For $\epsilon>0$, the non-holomorphic part is given as the sum over $i$ of terms of the form: 
\begin{align*}
    \Theta^{c_i(\epsilon)}_{K}(\tau)=\sqrt{\frac{N_i(\epsilon)}{y}}\sum_{\lambda\in K^{\vee}}\beta\left(\frac{y(\lambda\cdot c_i(\epsilon))^2}{N_i(\epsilon)}\right)q^{-Q(\lambda)}\mathfrak{e}_{[\lambda]}.
\end{align*}
If $c_i$ is not isotropic, then, as $\epsilon\rightarrow 0$, $N_i(\epsilon)\rightarrow N_i \neq 0$ and the quadratic form $\lambda\mapsto \frac{(\lambda\cdot c_i(\epsilon))^2}{N_i(\epsilon)}-2Q(\lambda)$ limits to the positive definite quadratic form $2Q_{c_1}$, hence there exists $A>0$ depending only on $c$ and $c_1$ such that for $\epsilon\leq A$, we have for every $\lambda \in K_\R$ the inequality
\[ \frac{(\lambda\cdot c_i(\epsilon))^2}{N_i(\epsilon)}-2Q(\lambda)\geq Q_{c_1}(\lambda)\geq 0.\]

Hence, using the estimate $\beta(t)\ll e^{-\pi t}$, we get 
\begin{align*}
 \left|\beta\left(\frac{y(\lambda\cdot c_i(\epsilon))^2}{N_i(\epsilon)}\right)q^{-Q(\lambda)}\right|&
\ll \frac{1}{2\pi}e^{-\pi y\left(\tfrac{(\lambda\cdot c_i(\epsilon))^2}{N_i(\epsilon)}-2Q(\lambda)\right)}\\
&\ll e^{-\pi y Q_{c_1}(\lambda)}.
\end{align*}

This implies the uniform convergence and hence we can interchange the limit as $\epsilon\rightarrow 0$ with the sum over $\lambda\in K^{\vee}$ yielding the convergence of the function $\Theta^{c_i(\epsilon)}_K$ to $\Theta_K^{c_i}$ introduced in the previous section. 
\bigskip

We now identify the limit of $\Theta^{c_i(\epsilon)}_K$ when $c_i$ is an isotropic vector. To simplify the notations and the computations hereafter, assume that $i=1$ and up to rescaling $\epsilon$, that $c$ is primitive integral in $K$ and $c_1(\epsilon)=c_1+\epsilon c$. Let $M$ be the orthogonal of $U=\Z c_1+\Z c$ in $K$. 
Denote the intersection matrix of $U$ by $$\begin{pmatrix}
    0&a\\  a&2b
\end{pmatrix}.$$ then the dual lattice $U^{\vee}$ is generated by $a^{-1}c_1$ and $a^{-1}\widetilde c$ where $\widetilde c=c-2\frac{b}{a} c_1$. The lattice $M$ maps injectively to a finite index sub-lattice of $M_1=c_1^{\bot}/c_1$ and we have the chain of inclusions 
\[U\oplus M\subseteq K\subseteq K^\vee\subseteq U^{\vee}\oplus M^{\vee}.\]

Hence every element of $K^{\vee}$ has a decomposition $a^{-1}(n_1c_1+n_2\widetilde{c})+ \gamma$ where $\gamma\in M^{\vee}$, $n_1,n_2\in \Z$. We denote by $[n_1,n_2]\in (\Z/a\Z)^2$ the image of $a^{-1}(n_1c_1+n_2\widetilde{c})$ in $U^{\vee}/U$. We have
that $\Theta^{c_1(\epsilon)}_K(\tau)$ equals

\begin{align*}
        &\sqrt{\frac{N_1(\epsilon)}{y}}\uparrow_{U\oplus M}^{K}\Bigg(\sum_{\substack{n_1,n_2\in \Z \\ \gamma\in M^\vee}}\beta\left(\frac{y(n_2+\epsilon n_1)^2}{\epsilon a+\epsilon^2 b}\right) q^{-\tfrac{n_1n_2}{a}+\tfrac{n_2^2 b}{a^2}-Q(\gamma)}\mathfrak{e}_{[ n_1, n_2]}\otimes \mathfrak{e}_{[\gamma]}\Bigg)\\
        &=\sqrt{\frac{N_1(\epsilon)}{y}}\uparrow_{U\oplus M}^{K}\left[\Bigg(\sum_{n_1,n_2\in \Z}\beta\left(\frac{y(n_2+\epsilon n_1)^2}{\epsilon a+\epsilon^2 b}\right)q^{-\tfrac{n_1n_2}{a}+\tfrac{n_2^2 b}{a^2}}\mathfrak{e}_{[n_1,n_2]}\Bigg)\right. \\ &\left. \hspace{240pt}\otimes\left(\sum_{\gamma\in M^{\vee}}q^{-Q(\gamma)}\mathfrak{e}_{[\gamma]}\right)\right].
\end{align*}

Notice that because $\beta(t)\ll e^{-\pi t}$, the first sum above is absolutely convergent for $\epsilon$ small enough and $\frac{N_1(\epsilon)}{\epsilon}=a+b\epsilon$. Furthermore, we can decompose the first sum according to $n_2=0$ or $n_2\neq 0$ as: 
\begin{multline*}
  \sqrt{\frac{N_1(\epsilon)}{y}}\sum_{n_1,n_2\in \Z}\beta\left(\frac{y(n_2+\epsilon n_1)^2}{\epsilon a+\epsilon^2 b}\right)q^{-\tfrac{n_1n_2}{a}+\tfrac{n_2^2 b}{a^2}}\mathfrak{e}_{[n_1,n_2]}\\=\left(\frac{\sqrt{N_1(\epsilon)}}{2\pi\sqrt{y}}\sum_{n_1\in \Z}\int_1^{\infty}u^{-\frac{3}{2}}e^{-\pi y u \tfrac{\epsilon n_1^2}{a+b\epsilon}}du\right) \mathfrak{e}_{[n_1,0]}\\
  +\sqrt{\frac{N_1(\epsilon)}{y}}\sum_{n_1,n_2\in \Z,\, n_2\neq 0}\beta\left(\frac{y(\epsilon n_1+n_2)^2}{a\epsilon+b\epsilon^2}\right)q^{-\tfrac{n_1n_2}{a}+\tfrac{n_2^2 b}{a^2}}\mathfrak{e}_{[n_1,n_2]}.
\end{multline*}
Write $\mathfrak{e}_{[n_1,0]}=\mathfrak{e}_{[n_1]}$. The first sum can be rewritten using dominated convergence and Poisson's summation formula as follows:
\begin{align*}
\sum_{n_1\in \Z}\int_1^{\infty}u^{-\frac{3}{2}}e^{-\pi y u \tfrac{\epsilon n_1^2}{a+b\epsilon}}du \mathfrak{e}_{[n_1]} &=\sum_{r\in\Z/a\Z}\sum_{n_1\in r+a\Z}\int_1^{\infty}u^{-\frac{3}{2}}e^{-\pi y u \tfrac{\epsilon n_1^2}{a+b\epsilon}}du \mathfrak{e}_{[r]}\\ &=\sum_{r\in\Z/a\Z}\int_1^{\infty}u^{-\frac{3}{2}}\left(\sum_{n_1\in r+a\Z}e^{-\pi y u \tfrac{\epsilon n_1^2}{a+b\epsilon}}\right)du \mathfrak{e}_{[r]}\\
&=\sum_{r\in\Z/a\Z}\int_1^{\infty}u^{-\frac{3}{2}}\frac{\sqrt{a+\epsilon b}}{a \sqrt{y u\epsilon}}\sum_{m\in \Z}e^{\tfrac{2\pi i mr}{a}-\tfrac{\pi m^2(a+b\epsilon)}{a^2y u\epsilon}}du\mathfrak{e}_{[r]}~.
\end{align*}
The last sum, multiplied by $\frac{1}{2\pi}\sqrt{\frac{N_1(\epsilon)}{y}}$, converges uniformly as $\epsilon\rightarrow 0$ to its constant term which is 
\[\sum_{r\in \Z/a\Z}\frac{1}{2\pi y}\int_{1}^{\infty}u^{-{2}}\mathfrak{e}_{[r]}=\frac{1}{2\pi y}\sum_{r\in \Z/a\Z}\mathfrak{e}_{[r]}=\frac{1}{2\pi y}\uparrow_\Z^{a\Z} (1)~.\]   
As for the second sum, we can write it using the estimate $\beta(t)\ll e^{-\pi t}$ as: 
\begin{multline*}
    \left|\left|\sqrt{\frac{N_1(\epsilon)}{y}}\sum_{n_1,n_2\in \Z, n_2\neq 0}\beta\left(\frac{y(n_2+\epsilon n_1)^2}{\epsilon a+b\epsilon ^2}\right)q^{-\tfrac{n_1n_2}{a}+\tfrac{n_2^2b}{a^2}}\mathfrak{e}_{[n_1,n_2]}\right|\right|\\ \ll \sqrt{\frac{N_1(\epsilon)}{y}}\sum_{n_1,n_2\in \Z, n_2\neq 0} e^{-\pi y\left( n_2^2\left(\tfrac{2b}{a^2}+\tfrac{1}{\epsilon(a+b\epsilon)}\right)-\tfrac{2b\epsilon n_1n_2}{a(a+b\epsilon)} +\epsilon \tfrac{n_1^2}{a+b\epsilon}\right)}~.
\end{multline*}
By computing the discriminant of the quadratic form in the exponent above, we see that there exists $A_1$ such that for $\epsilon \leq A_1$, the following inequality holds for every $n_1,n_2\in \Z$: 
\[n_2^2\left(\frac{2b}{a^2}+\frac{1}{\epsilon a+b\epsilon}\right)-\frac{2b\epsilon n_1n_2}{a(a+b\epsilon)} +\epsilon \frac{n_1^2}{a+b\epsilon}\geq \frac{1}{2a}\left(\frac{n_2^2}{\epsilon} +\epsilon n_1^2\right).\]
Hence combining the previous inequalities and Poisson summation formula we get: 
\begin{multline*}
    \left|\left|\sqrt{\frac{N_1(\epsilon)}{y}}\sum_{n_1,n_2\in \Z, n_2\neq 0}\beta\left(\frac{y(n_2+\epsilon n_1)^2}{\epsilon a+b\epsilon ^2}\right)q^{-\tfrac{n_1n_2}{a}+\tfrac{n_2^2b}{a^2}} \mathfrak{e}_{[n_2]}\right|\right|\\\ll \sqrt{\frac{N_1(\epsilon)}{y}}\sum_{n_1,n_2\in \Z, n_2\neq 0} e^{-\tfrac{\pi y}{2a} \left(\tfrac{n_2^2}{\epsilon} +\epsilon n_1^2\right)}\\
\ll \left(\sqrt{\frac{\epsilon}{y}}\sum_{n_1\in\Z} e^{-\tfrac{\pi y}{2a} \epsilon n_1^2}\right) \left(\sum_{n_2\in \Z, n_2\neq 0} e^{-\tfrac{\pi y}{2a} \tfrac{n_2^2}{\epsilon}} \right)\\
\ll \left(\frac{\sqrt{2aN_1(\epsilon)}}{y\sqrt{\epsilon}}\sum_{n_1\in \Z} e^{-2\pi a \tfrac{n_1^2}{y \epsilon }}\right) \left(\sum_{n_2\in \Z, n_2\neq 0} e^{-\tfrac{\pi y}{2a} \tfrac{n_2^2}{\epsilon}} \right),\\
\end{multline*}
and by normal convergence, the last product converges to $0$ as 
$\epsilon\rightarrow 0$.
We conclude that the limit of $\Theta^{c_1(\epsilon)}_K$ as $\epsilon\rightarrow 0$ is equal to \[\frac{1}{2\pi y}\uparrow_{U\oplus M}^{K}\left(\sum_{\underset{r\in \Z/a\Z}{\gamma\in M^{\vee}}}q^{-Q(\gamma)}\mathfrak{e}_{[\gamma+\frac{rc_1}{a}]}\right)=\frac{1}{2\pi y}\uparrow_{M_1}^{K}(\Theta_{M_1})\] where $M_1=c_1^{\bot}/c_1$, which is the non-holomorphic part of the mixed mock modular form $-\frac{1}{6}E_2\cdot \uparrow_{M_1}^K(\Theta_{M_1})$.


Notice that \[\widehat{\Theta}_\epsilon-\sum_{Q(c_i)>0}a_i\uparrow_{c_i^\perp\oplus \Z c_i}^{K}(\Theta_{c_i^{\bot}}\otimes F_{N_i})+\sum_{Q(c_i)=0}a_i\uparrow_{M_1}^K(\Theta_{M_1})\otimes \tfrac{1}{6}E_2^*\]
transforms like a holomorphic modular form of weight $1+\frac{n}{2}$ with respect to $\overline{\rho}_K$. Hence by letting $\epsilon\rightarrow 0$, it still transforms appropriately, and the previous computations show that its non-holomorphic part vanishes, whereas its holomorphic part is equal to 
\[\Theta^+-\sum_{Q(c_i)>0}a_i\uparrow_{K_{c_i}\oplus \Z c_i}^{K}(\Theta_{c_i^{\bot}}\otimes F^+_{N_i})+\sum_{Q(c_i)=0}\frac{a_i}{6}E_2 \uparrow_{M_i}^K(\Theta_{M_i})\].
This concludes the proof of \Cref{typeII+typeIII}.
\end{proof}

\section{Orthogonal Shimura varieties}
In this section, we give an explicit description of the special divisors in toroidal compactifications of orthogonal Shimura varieties and we derive the main intersection formula, see \Cref{t:intersection-formula}, of special divisors with curves contained in the boundary. For general references on toroidal compactifications, the reader may consult \cite{mumford-amrt} or \cite{bruinierzemel,fiori,looijenga,zemel-parabolic} for the orthogonal case. 

\subsection{Baily-Borel compactification}
Let $(L,\cdot)$ be an even lattice of signature $(2,n)$, let $L^\vee$ be its dual lattice and let $\cD$ be the associated Hermitian symmetric domain, a connected component of $$\{\C x\in \mathbb{P}(L_\C)\,\big{|}\,x\cdot x=0, \,x\cdot\overline{x}>0\}.$$ It embeds into its compact dual $$\cD^{c}:=\{\C x\in \mathbb{P}(L_\C)\,\big{|}\,x\cdot x=0\}$$ which is a quadric hypersurface in the projective space $\mathbb{P}(L_\C)\simeq \mathbb{P}^{n+1}$. 

Let $\Gamma\subset \mathrm{O}^+(L)$ be a subgroup of the stable orthogonal group, defined as the kernel of the reduction $\mathrm{O}^+(L)\rightarrow \mathrm{O}(L^{\vee}/L)$. Then $\Gamma$ acts on $\cD$ and the quotient has the structure of a complex orbifold, it is an \emph{orthogonal Shimura variety}. The restriction of the tautological line bundle $\mathcal{O}_{\mathbb{P}^{n+1}}(-1)$ to $\cD$ is naturally $\Gamma$-equivariant and hence descends to an orbi-line bundle $\mathcal{L}$ on $X_\Gamma:=\Gamma\backslash \cD$, called the \emph{Hodge bundle}. 
For any $\beta\in L^{\vee}/L$ and $m\in -Q(\beta)+\Z$, we define the special divisor of discriminant $(\beta,m)$ in $X_\Gamma$ to be: 
\begin{align}\label{special-divisor}
Z^o(\beta,m)=\bigcup_{\substack{\lambda\in\beta+L \\  Q(\lambda)=-m}}\{ x\in \cD\,\big{|}\,x\cdot \lambda=0\}\textrm{ mod }\Gamma. 
\end{align}

By the Baily-Borel theorem \cite{bailyborel}, the line bundle $\mathcal{L}$ is ample and sections of its powers determine an embedding of the quotient $X_\Gamma$ into a projective space. 
The closure of the image is a projective variety, and is by definition the \emph{Baily-Borel compactification}: $X_\Gamma\hookrightarrow X_\Gamma^{\rm BB}$.

This compactification can be described more explicitly using the rational boundary components of $\cD$ in its topological closure $\overline{\cD}\subset \cD^c$. 

\begin{definition} The {\it rational boundary components} $\cD_I$ of $\cD$ are
in bijection with primitive isotropic sublattices $I\subset L$ as follows: $$\cD_I:=\{\C x\in \overline{\cD}\subset \cD^c\,\big{|}\,{\rm span}\{{\rm Re}\,x,{\rm Im}\,x\}=I_\R\}.$$ \end{definition}

\begin{notation} The possible ranks of an isotropic sublattice of $L$
are $1$ and $2$. To distinguish them,
we henceforth use the letter $I$
when the rank is $1$ and the letter
$J$ when the rank is $2$. \end{notation}

\begin{definition}  For a rank $1$ isotropic lattice, $\cD_I$ is a point. For a rank $2$ isotropic lattice,
$\cD_J$ is a copy of the upper half-plane $\mathbb{H}$.
We call these boundary components
{\it Type III} and {\it Type II} respectively. \end{definition}
 
\begin{remarque} The terminology of Type II and III comes from
the classification \cite{Kulikov, Persson} of one-parameter degenerations of K3 surfaces.\end{remarque}

One defines the {\it rational closure} of $\cD$ to be $$\cD^+:=\cD\cup_J \cD_J\cup_I \cD_I\subset \overline{\cD},$$ endowed with a horoball topology near the boundary components $\cD_I$ and $\cD_J$. An explicit topological description of the Baily-Borel compactification is then
$X_\Gamma^{\rm BB}=(\Gamma\backslash \cD)^{\rm BB}=\Gamma\backslash \cD^+$. Thus,
boundary components of $(\Gamma\backslash \cD)^{\rm BB}$ are the
\begin{enumerate}
    \item Type III components, which are points, in bijection
    with $\Gamma$-orbits of isotropic lines $I\subset L$, and
    \item Type II components, which are modular curves ${\rm Stab}_\Gamma(J)\backslash \mathbb{H}$, in bijection with $\Gamma$-orbits of isotropic planes $J\subset L$.
\end{enumerate}

The closures of Type II modular curves in $(\Gamma\backslash \cD)^{\rm BB}$
contain Type III boundary points, which form the cusps of the modular curves. Such incidences correspond to $\Gamma$-orbits of
inclusions $I\subset J\subset L$.

\subsection{Special divisors at the Type III boundary}

We now describe the geometry of special divisors of $\cD$ near a cusp. Let $I=\Z \delta\subset L$ be a primitive isotropic
sublattice. Given any point $\C x\in \cD$, there is a unique
representative $x\in L_\C$ for which $x\cdot \delta=1$---we cannot have $x\cdot \delta=0$ for signature reasons. This defines an embedding of $\cD$ into the affine hyperplane $\{x\in L_\C\,\big{|}\,x\cdot \delta=1\}$.

Let $U_I\subset {\rm Stab}_\Gamma(I)$ be the unipotent radical of the integral parabolic stabilizer of $I$. Consider an integral (but non-orthogonal) decomposition $L^\vee = \Z\delta'+I^\perp_{L^\vee}$ with $\delta'\cdot \delta=1$.
Then we can write
$$x=\delta'+x_0+c\delta$$ where $x_0$ lies in
$\{\delta,\delta'\}^\perp_\C\simeq (I^\perp/I)_\C$
and $c\in \C$ is uniquely determined by the property that
$x \cdot x=0$. Let $K:=I^\perp/I$. The action of some $\gamma\in U_I$ necessarily takes the form: \begin{align*}
\delta&\mapsto \delta \\ 
v&\mapsto v-(v\cdot w)\delta\textrm{ for }v\in I^\perp \\
\delta'& \mapsto \delta'+ w - (\delta'\cdot w)\delta - \tfrac{1}{2}(w\cdot w)\delta\textrm{ for some }w\in K.
\end{align*} 
This formula above doesn't depend on how $w$ is lifted into $I^\perp$. 

On the coordinate $x_0\in K_\C$,
the unipotent radical $U_I$ acts by a finite index subgroup
of the translation group $K$. In the full stable orthogonal group, we get the full translation group $K$.
Thus, $$U_I \backslash \cD\hookrightarrow U_I \backslash (U_I\otimes \C) \simeq (\C^*)^n$$ admits a torus embedding. This torus
$U_I\otimes \C^*=U_I\backslash (U_I)_\C$ is isogenous to the torus $K\backslash K_\C = K\otimes \C^*$.

We analyze the special divisors in the above torus embedding. It suffices to restrict to $U_I=K$---in the more general case where $U_I\subsetneq K$, we just take the inverse images of the special divisors under the isogeny $U_I\backslash K_\C\to K\backslash K_\C.$
Consider $$Z^o(\widetilde{\lambda}):=\{x\in \cD\,\big{|}\,x \cdot \widetilde{\lambda}=0\}\textrm{ mod }\Gamma$$ for some primitive $\widetilde{\lambda}\in L^\vee$ which passes through the cusp of $(\Gamma\backslash \cD)^{\rm BB}$ corresponding to $I$. Choose a representative of the $\Gamma$-orbit for which $\widetilde{\lambda}\in I^\perp_{L^\vee}$.
Let $\lambda$ be the image of $\widetilde{\lambda}$ under reduction to $I^\perp_{L^\vee}/I^{\rm sat}_{L^\vee}=K^\vee$, where $I^{\rm sat}_{L^\vee}$ is the saturation of $I$ in $L^\vee$.

Note that the vector $\lambda\in K^\vee$ may not be primitive. 
Write $\lambda = u\lambda^{\rm prim}$ where $\lambda^{\rm prim}\in K^\vee$ is primitive. The lifts of $\lambda$ to $\widetilde{\lambda}\in I^\perp_{L^\vee}$ are the set $$\{u\widetilde{\lambda}^{\rm prim}+\tfrac{k}{d}\delta\,\big{|}\,k\in \Z\}$$ where $d$ is the imprimitivity of $\delta\in L^\vee$ and $\widetilde{\lambda}^{\rm prim}$ is a lift of $\lambda^{\rm prim}$. Such a lift is primitive in $L^\vee$ if and only if $\gcd(k,u)=1$. Furthermore,
there exists some $\gamma\in K$ for which $\gamma(\widetilde{\lambda}^{\rm prim})=\widetilde{\lambda}^{\rm prim}+\delta$. So a collection of coset representatives of the $K$-action on $\{u\widetilde{\lambda}^{\rm prim}+\tfrac{k}{d}\delta\,\big{|}\,k\in \Z\}$ is provided by taking all $0\leq k<du$.

The equation of $Z^o(\widetilde{\lambda})$ in coordinates $x_0\in K_\C$ is the affine hyperplane $$x_0\cdot \lambda = - \delta'\cdot \widetilde{\lambda}\in \Q.$$
Under the exponentiation isomorphism $K\backslash K_\C\to K\otimes \C^*$, the equation of $Z^o(\widetilde{\lambda})$ becomes a torsion translate of a character hypersurface $$\chi_{\lambda^{\rm prim}}(z)=z^{\lambda^{\rm prim}} ={\rm exp}(-\tfrac{2\pi ik}{du}-2\pi i \delta'\cdot \widetilde{\lambda}^{\rm prim})$$ where $z\in (\C^*)^n$ is the coordinate-wise exponential of $x_0\in K_\C$. 

 \begin{proposition}\label{typeiii-special} Let $\lambda\in K^\vee$ have imprimitivity $u$. In the torus embedding $U_I\backslash \cD\hookrightarrow U_I\otimes \C^*$ there are $d\varphi(u)$ special divisors associated to lifts of $\lambda$ to a primitive vector $\widetilde{\lambda}\in I^\perp_{L^\vee}$, each one being a torsion translate of the hypersurface $\chi_{\lambda^{\rm prim}}(z)=1$. \end{proposition}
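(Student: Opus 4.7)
The plan is to take the parametrization of lifts already established in the discussion preceding the proposition and combine it with a short combinatorial count. Concretely, I would first note that since $\widetilde{\lambda}$ is required to lie in $I^\perp_{L^\vee}$ and to reduce to the given $\lambda\in K^\vee$, every such lift has the form
\[
\widetilde{\lambda}_k := u\widetilde{\lambda}^{\rm prim}+\tfrac{k}{d}\delta,\qquad k\in\Z,
\]
where $u$ is the imprimitivity of $\lambda$ and $d$ the imprimitivity of $\delta\in L^\vee$. The primitivity condition in $L^\vee$ then translates into the arithmetic condition $\gcd(k,u)=1$, since the content of $\widetilde{\lambda}_k$ in $L^\vee$ is precisely $\gcd(k,u)$.

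Next I would analyze the $K$-action. The element $\gamma\in K$ with $\gamma(\widetilde{\lambda}^{\rm prim})=\widetilde{\lambda}^{\rm prim}+\delta$ (whose existence is recorded above the proposition) sends $\widetilde{\lambda}_k$ to $\widetilde{\lambda}_{k+du}$, and no smaller shift comes from $K$ because the composition $K\to L^\vee/L^{\rm sat}$ sending $w\mapsto w\cdot \widetilde{\lambda}^{\rm prim}$ takes values in $\Z$ (rescaled appropriately). Consequently the $K$-orbits of primitive lifts correspond bijectively to residues $k\bmod du$ with $\gcd(k,u)=1$. The count is then immediate: the condition $\gcd(k,u)=1$ depends only on $k\bmod u$, and each of the $\varphi(u)$ admissible residues mod $u$ has exactly $d$ lifts in $\{0,1,\dots,du-1\}$, giving $d\varphi(u)$ distinct special divisors.

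Finally, for the shape of each divisor, I would substitute the lift $\widetilde{\lambda}_k$ into the hyperplane equation $x_0\cdot\lambda=-\delta'\cdot\widetilde{\lambda}_k$ derived just before the proposition. Applying the exponentiation isomorphism $K\backslash K_\C\xrightarrow{\sim}K\otimes\C^*$ converts $x_0\cdot\lambda$ into the character $\chi_{\lambda^{\rm prim}}(z)^{u}$; but since $u\widetilde{\lambda}^{\rm prim}=\lambda$ inside $K^\vee$, the correct identification is with $\chi_{\lambda^{\rm prim}}$, and the right-hand side $-\delta'\cdot\widetilde{\lambda}_k = -\delta'\cdot u\widetilde{\lambda}^{\rm prim}-k/d$ exponentiates to the torsion factor $\exp(-2\pi i\delta'\cdot\widetilde{\lambda}^{\rm prim}-2\pi i k/(du))$ displayed in the text. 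Thus each $Z^o(\widetilde{\lambda}_k)$ is the torsion translate $\{\chi_{\lambda^{\rm prim}}(z)=\zeta\}$ of the hypersurface $\{\chi_{\lambda^{\rm prim}}(z)=1\}$ for the explicit root of unity $\zeta$.

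There is no genuinely hard step here; the proposition is really a bookkeeping assertion. The subtlest point is to be careful with the two different imprimitivities ($u$ for $\lambda$ in $K^\vee$ and $d$ for $\delta$ in $L^\vee$), to make sure the $K$-action shifts $k$ by $du$ rather than by $u$ or $d$, and to correctly identify the character as $\chi_{\lambda^{\rm prim}}$ rather than $\chi_\lambda$. Once these identifications are in place, the count $d\varphi(u)$ and the assertion that each component is a torsion translate of $\chi_{\lambda^{\rm prim}}=1$ both drop out directly.
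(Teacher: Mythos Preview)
Your argument is correct and follows the paper's approach for the case $U_I=K$: you unpack the parametrization of primitive lifts and the $K$-action established just before the proposition, and the count $d\varphi(u)$ drops out exactly as you describe. The paper's own proof is terser, simply pointing back to that preceding discussion.

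However, the proposition is stated for the torus embedding into $U_I\otimes\C^*$, and you work entirely with the $K$-action, so implicitly you are assuming $U_I=K$. The paper's proof contains a second step addressing the case $U_I\subsetneq K$: one observes that the special divisors in $U_I\otimes\C^*$ are the pullbacks along the isogeny $U_I\otimes\C^*\to K\otimes\C^*$, so the count and the description as torsion translates transfer directly (with the caveat that the hypersurface $\chi_{\lambda^{\rm prim}}(z)=1$ may become reducible in $U_I\otimes\C^*$ when $\lambda^{\rm prim}$ is imprimitive in the larger character lattice $U_I^\vee\supset K^\vee$). You should add this reduction to make the argument complete for general $U_I$; it is the only genuine omission.

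A minor remark: your description of the exponentiation step is slightly tangled. The aside about $\chi_{\lambda^{\rm prim}}(z)^{u}$ versus $\chi_{\lambda^{\rm prim}}$ is unnecessary---dividing the affine equation $x_0\cdot\lambda=-\delta'\cdot\widetilde{\lambda}_k$ through by $u$ before exponentiating yields $\chi_{\lambda^{\rm prim}}$ directly---but you arrive at the correct displayed formula regardless.
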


 \begin{proof} When $U_I=K$, the proposition follows from the above computation of the $K$-coset representatives of lifts of $\lambda$. When $U_I\subsetneq K$, the hypersurface $\chi_{\lambda^{\rm prim}}(z)=1$ may break into multiple character hypersurfaces in $U_I\otimes \C^*$---this occurs exactly when $\lambda^{\rm prim}\in K^\vee$ becomes imprimitive in the character group $U_I^\vee\supset K^\vee$. But the equations of the special divisors are still pulled back from $K\otimes \C^*$ along the isogeny $U_I\otimes \C^*\to K\otimes \C^*$, reducing the proposition to the $U_I=K$ case. \end{proof}

 \begin{corollary}\label{typeiii-special-cor} Let $Z_I^o(\beta,m)\subset Z^o(\beta,m)$ be the union of the components of $Z^o(\beta,m)$, see \ref{special-divisor}, whose Zariski closure in $(\Gamma\backslash \cD)^{
 \rm BB}$ contains the Type III cusp associated to $I$. The pullback of $Z_I^o(\beta,m)$ to $U_I\backslash \cD\subset U_I\otimes \C^*$ is $$\bigcup_{\substack{\lambda\in p^K_L(\beta)+K \\  Q(\lambda)=-m}} \{u\textrm{ translates of }\chi_{\lambda^{\rm prim}}(z)=1\}$$ where $u=u(\lambda)$ is the imprimitivity of $\lambda$.\end{corollary}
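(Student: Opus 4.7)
The strategy is to identify those components of $Z^o(\beta,m)$ whose closures pass through the Type III cusp at $I$, parametrize them by their reductions in $K^\vee$, and then apply Proposition~\ref{typeiii-special} to describe each contribution.

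First, I would show that a component $\{x\cdot\widetilde\lambda=0\}$ of $Z^o(\beta,m)$, indexed by $\widetilde\lambda\in\beta+L$ with $Q(\widetilde\lambda)=-m$, extends in the rational closure $\cD^+$ through the Type III boundary point $\C\delta\in\cD^c$ corresponding to $I=\Z\delta$ if and only if the hyperplane contains $\C\delta$, equivalently $\widetilde\lambda\cdot\delta=0$, i.e., $\widetilde\lambda\in I^\perp_{L^\vee}$. Consequently the pullback of $Z_I^o(\beta,m)$ to $U_I\backslash\cD$ is the union, modulo the $U_I$-action, of the hyperplanes $\{x\cdot\widetilde\lambda=0\}$ for $\widetilde\lambda\in I^\perp_{L^\vee}\cap(\beta+L)$ with $Q(\widetilde\lambda)=-m$.

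Next, reducing modulo $I^{\rm sat}_{L^\vee}$ yields a map $I^\perp_{L^\vee}\to K^\vee$ that preserves $Q$ (since $\widetilde\lambda\cdot\delta=0$ and $Q(\delta)=0$). By the construction of the intertwiner in Definition~\ref{d:pull-push-isotropic}, the image $\lambda\in K^\vee/K$ is precisely $p^K_L(\beta)$, so $\lambda$ ranges over $\{\lambda\in p^K_L(\beta)+K : Q(\lambda)=-m\}$. For each such $\lambda$ with imprimitivity $u$, the computation in the proof of Proposition~\ref{typeiii-special} identifies every lift $\widetilde\lambda=u\widetilde\lambda^{\rm prim}+\tfrac{k}{d}\delta$ (for $k\in\Z$) with a torsion translate of the character hypersurface $\chi_{\lambda^{\rm prim}}(z)=1$ in the torus embedding $U_I\backslash\cD\hookrightarrow U_I\otimes\C^*$.

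Finally, I would count how many such lifts, modulo the $U_I$-action, lie in the prescribed coset $\beta+L$. The $U_I$-action shifts $k$ by multipliers in $\widetilde\lambda\cdot U_I\subset u\Z$, so $k$ is well defined in $\Z/(du)\Z$; and since $\delta\in L$ has imprimitivity $d$ in $L^\vee$, the class of $\widetilde\lambda$ in $L^\vee/L$ depends only on $k\bmod d$. Thus exactly $u$ of the $du$ residues give $\widetilde\lambda\in\beta+L$, producing the stated $u$ distinct torsion translates of $\chi_{\lambda^{\rm prim}}(z)=1$ for each $\lambda$.

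The main subtlety will be the coset bookkeeping at the end: carefully matching how the $U_I$-action interacts with the $\beta$-constraint (and in particular correctly handling the general case $U_I\subsetneq K$ via the isogeny $U_I\otimes\C^*\to K\otimes\C^*$, exactly as in Proposition~\ref{typeiii-special}). Once this is in place, assembling the formula for the pullback is essentially a reindexing.
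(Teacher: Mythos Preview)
Your proposal is correct and follows essentially the same approach as the paper: identify the components meeting the cusp as those with $\widetilde\lambda\in I^\perp_{L^\vee}$, reduce to $K^\vee$, and count the $du$ lifts modulo $U_I$, equidistributed among the $d$ cosets $\beta$ over $[\lambda]$. The only organizational difference is that the paper reaches the count $du$ by applying Proposition~\ref{typeiii-special} to the \emph{primitive} lifts of each $u'\lambda^{\rm prim}$ for $u'\mid u$ and invoking $\sum_{u'\mid u}\varphi(u')=u$, whereas you count all lifts directly---a slightly more streamlined route to the same conclusion.
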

 
 \begin{proof} 
Note that $Z^o(\beta,m)$ ranges over both primitive and imprimitive vectors. So it contains the divisors $Z^o(\widetilde{\mu})$ associated to primitive lifts of $u'\lambda^{\rm prim}$ ranging over the divisors $u'\mid u$. By the formula $\sum_{u'\mid u} \varphi(u')=u$ and Proposition \ref{typeiii-special}, there are $du$ special divisors associated to lifts of $\lambda$, whose classes may vary in the discriminant group $L^\vee/L$.
Observe that the classes $[\widetilde{\lambda}]$ of lifts of $\lambda$ equidistribute in the classes $\beta\in L^\vee/L$ which push to the class $[\lambda]=p^K_L(\beta)\in K^\vee/K$. Hence there are exactly $u$ translates of the hypersurface $\chi_{\lambda^{\rm prim}}(z)=1$ appearing in $Z_I^o(\beta,m)$. \end{proof}

\subsection{Toroidal compactification}\label{s:tor-comp} We now describe the toroidal
compactification in a neighborhood of the Type III cusps. As above, let $I\subset L$ denote an isotropic line, and let $K:=I^\perp/I$. Define a finite index subgroup $\Gamma_I\subset O^+(K)$ by the exact sequence $$0\to U_I\to {\rm Stab}_\Gamma(I)\to \Gamma_I\to 0.$$ Let $C_K\subset K_\R$ denote the {\it positive cone}---a connected component of the positive norm vectors in $K_\R\simeq \R^{1,n-1}$ and let $C_K^+$ denote its {\it rational closure}: The union of $C_K$ with all rational isotropic rays at its boundary.

\begin{definition} A {\it $\Gamma$-admissible collection of fans} (or simply {\it fan}) is a collection $\Sigma=\{\Sigma_I\}$ of polyhedral decompositions $\Sigma_I=\{\sigma\}$ of $C_K^+$ ranging over all primitive isotropic $I\subset L$, such that: 
\begin{enumerate}
\item the cones $\sigma\subset C_K^+$ are strictly convex, rational polyhedral cones, and $\Sigma_I$ is closed under taking intersections and faces, and
\item the collection $\Sigma=\{\Sigma_I\}$ is $\Gamma$-invariant with only finitely many $\Gamma$-orbits of cones $\sigma\in \Sigma_I$.
\end{enumerate}
\end{definition}

Equivalently, we can choose one polyhedral decomposition $\Sigma_I$ for each $\Gamma$-orbit of isotropic line $I$, and require that it be $\Gamma_I$-invariant, with finitely many orbits of cones. A useful visualization is to consider $\{\mathbb{P}\sigma\}_{\sigma\in\Sigma_I}$ which is a $\Gamma_I$-periodic tiling of (rationally cusped) hyperbolic $(n-1)$-space $\mathbb{P} C_K^+$.

We extend the algebraic torus $U_I\otimes \C^*$ by the 
infinite-type toric variety $X(\Sigma_I)$ whose fan is $\Sigma_I$.
It admits an action of $U_I\otimes \C^*$ whose orbits are in bijection
with the cones $\sigma\in \Sigma_I$. The orbit corresponding to $\sigma$ is isomorphic to $(U_I/\textrm{span}(\sigma))\otimes \C^*\simeq (\C^*)^{\rm codim(\sigma)}$.

Let $V_I$ be a $\Gamma_I$-invariant, analytic tubular neighborhood
of the {\it Type III strata} of $X(\Sigma_I)$:
These are the torus orbits corresponding to cones $\sigma$
satisfying $\sigma\cap C_K\neq \emptyset$---exactly the 
strata corresponding to cones $\sigma$ which are neither
the origin $0\in K$ nor a rational isotropic ray.
Such cones $\sigma$ only have finite
stabilizers in $\Gamma_I$, so we may assume that the action
of $\Gamma_I$ on $V_I$ is properly discontinuous. Let $$A_I:=V_I\cap (U_I\otimes \C^*)$$ be the intersection with the open torus orbit.

Choosing $V_I$ sufficiently small, we can ensure that
$\Gamma_I\backslash A_I$ embeds into $\Gamma\backslash \cD$. This follows from the fact that a punctured neighborhood of the Type III boundary component associated to $I$ is locally modeled as the quotient of some analytic open subset of $\cD$ by the parabolic stabilizer ${\rm Stab}_\Gamma(I)$.
Then, we define the {\it Type III extension} of $\Gamma\backslash \cD$ to be the glued analytic space $$(\Gamma\backslash \cD)^{\Sigma,{\rm III}}:=(\Gamma\backslash \mathbb{D})\cup_{\Gamma_I\backslash A_I} (\Gamma_I\backslash V_I)$$ with the union taken over all $I$.

Taking the further union with the Type II extensions $\Gamma_J\backslash V_J$ defined in the following subsection
gives the {\it toroidal compactification}: $$(\Gamma\backslash \cD)^\Sigma:=(\Gamma\backslash \cD)^{\Sigma,{\rm III}}\cup_{\Gamma_J\backslash A_J}(\Gamma_J\backslash V_J).$$

For any $\Sigma$, it admits a morphism $(\Gamma\backslash \cD)^\Sigma\to (\Gamma\backslash \cD)^{\rm BB}.$

\begin{definition}
We say that $\Gamma$ is {\it neat} if the eigenvalues of every element $\gamma\in \Gamma$ acting on $L_\C$ generate a torsion free subgroup of $\C^*$. 
\end{definition}

\begin{definition}
We say that $\Sigma$ is {\it regular} if every cone
$\sigma$ is a standard affine cone ($\sigma\cap U_I$ is isomorphic
to $\N^{\dim(\sigma)}$ as a monoid). More generally, $\Sigma$ is {\it simplicial} if the generators of any
cone $\sigma\in \Sigma_I$ are linearly independent.
\end{definition}

If $\Gamma$ is neat and $\Sigma$ is regular,
then the toroidal compactification $X_\Gamma^\Sigma= (\Gamma\backslash \cD)^{\Sigma}$ is smooth with reduced normal crossings boundary, because $\Gamma$, $\Gamma_I$, $\Gamma_J$ will act freely on $\cD$ and the respective toroidal extensions. Since the toric variety associated to a simplicial cone $\sigma$ has only finite quotient singularities, it is a local $\Q$-Poincar\'e duality space. For the same reason, $X^\Sigma_\Gamma$ is a $\Q$-Poincar\'e duality space (regardless of whether $\Gamma$ is neat) for any simplicial $\Sigma$.

\begin{remarque} \label{r:simplicial}
We can and will assume that $\Gamma$ is neat
and $\Sigma$ is regular. One can reduce the
main result of the paper for simplicial fans $\Sigma$ 
and arbitrary $\Gamma$ to this case:
By passing to a finite index subgroup
$\Gamma'\subset \Gamma$ we can make the action
neat, and by refining $\Sigma\prec \Sigma'$,
we can make the fan regular. We get a
morphism $$\pi\colon X^{\Sigma'}_{\Gamma'}\to X_{\Gamma'}^{\Sigma}\to X_\Gamma^{\Sigma}$$ which is the composition of a birational
morphism and a finite cover.
Then $\pi^*\colon H^2(X_\Gamma^\Sigma,\Q)\to H^2(X_{\Gamma'}^{\Sigma'},\Q)$
is an embedding.

The main theorem is proved by intersecting
the Zariski closures $Z(\beta,m)=\overline{Z^o(\beta,m)}$ with curve
classes $C\in H_2(X_\Gamma^\Sigma,\Q)$. Such a class $C$ can be expressed as $[\Gamma:\Gamma']C \equiv \pi_*D$ for some $D\in H_2(X_{\Gamma'}^{\Sigma'},\Z)$ which intersects the exceptional divisors of $\pi$ to be zero: This is achieved by using a wrong-way map $D:=({\rm PD}\circ \pi^*\circ {\rm PD})(C).$ By push-pull, \begin{align*} Z(\beta,m)\cdot_{X_\Gamma^\Sigma} C &= \frac{1}{[\Gamma:\Gamma']} \pi^*Z(\beta,m)\cdot_{X_{\Gamma'}^{\Sigma'}} D \\ &=\frac{1}{[\Gamma:\Gamma']}
Z(\beta,m)\cdot_{X_{\Gamma'}^{\Sigma'}}D.\end{align*} The last
equality holds because $D$ intersects the exceptional divisors
to be zero and the strict transforms of the $Z(\beta,m)$'s for $\Gamma,
\Sigma$ are the $Z(\beta,m)$'s for $\Gamma',\Sigma'$. The same equality holds for any boundary divisor in $X_\Gamma^\Sigma$ associated to a ray in $\Sigma$. This implies that the pairing with $D$ of the generating series from Theorem \ref{t:precise-main} applied  to $X_{\Gamma'}^{\Sigma'}$ is equal (up to the factor $[\Gamma:\Gamma']$) to the pairing of $C$ with the analogous series for $X_{\Gamma}^{\Sigma}$. This proves that the validity of Theorem \ref{t:precise-main} for $X_{\Gamma'}^{\Sigma'}$ implies its validity for $X_{\Gamma}^{\Sigma}$.
\end{remarque}

\subsection{The Type II boundary}\label{subsec-ii} We now roughly
sketch the structure of the Type II boundary component associated to the $\Gamma$-orbit of an isotropic plane $J\subset L$. 

Choose a basis $J=\Z\delta\oplus \Z\lambda$
and as before, represent $\C x\in \cD$ uniquely by the $x\in L_\C$ for which $x\cdot \delta=1$. Choose $\delta',\lambda'$
which generate a lattice $J'=\Z\delta'+\Z\lambda'$ in $L^\vee$ realizing
${\rm Hom}(J,\Z)$ and write $$x= (\delta'+\tau\lambda')+x_0+(c_1\delta+c_2\lambda)$$
where $x_0\in \{\delta,\delta',\lambda,\lambda'\}^\perp_\C\cong (J^\perp/J)_\C$ and $c_1,c_2\in \C$. The condition $x\cdot x=0$ specifies that $(c_1,c_2)\in L\subset \C^2$ lies in an affine line $L$, and the condition $x\cdot \overline{x}>0$ further implies that $(c_1,c_2)$ lies in a half-space $\mathbb{H}_{(x_0,\tau)}\subset L$ inside this line depending on $(x_0,\tau)\in (J^\perp/J)_\C\times \mathbb{H}$. The unipotent radical $U_J\subset {\rm Stab}_\Gamma(J)$
is an extension of the form $$0\to \Z \to U_J\to T_J\to 0 $$
where $T_J$ is a finite index translation subgroup $$T_J\subset (J^\perp/J)\otimes (\Z\oplus \Z\tau)\subset (J^\perp/J)_\C.$$

First taking the quotient by $\Z\subset U_J$ embeds
$\Z\backslash \cD$ as a holomorphic punctured disk bundle
(the punctured disks are $\Z\backslash \mathbb{H}_{(x_0,\tau)}$) over the product $(J^\perp/J)_\C\times \mathbb{H}$. We then
quotient by the action of $T_J$. The punctured disk bundle descends to one over a family of abelian varieties $T_J\otimes \mathcal{E}$. Here $\mathcal{E} \to \mathbb{H}$ is the universal elliptic curve whose fiber of $\tau\in \mathbb{H}$ is $\C/(\Z\oplus \Z\tau)$.

We define $V_J$ as a tubular neighborhood of the zero section of the extension of $U_J\backslash \cD$ from a punctured disk bundle to a disk bundle. Then the {\it Type II toroidal extension} $(\Gamma\backslash \cD)^{\rm II}$ is the result of gluing $\Gamma\backslash \cD$ to $\Gamma_J\backslash V_J$ along a punctured tubular neighborhood $\Gamma_J\backslash A_J$ of the zero section. Here $\Gamma_J$ is defined by the exact sequence $$0\to U_J\to {\rm Stab}_\Gamma(J)\to \Gamma_J\to 0$$ and is a finite index subgroup $\Gamma_J\subset {\rm SL}(J)\times O(J^\perp/J).$ The Type II extension is notably independent of the choice of a fan $\Sigma$; it is the same in every toroidal compactification,
and the boundary divisor is always isomorphic to $\Gamma_J\backslash (T_J\otimes \mathcal{E})$.

Finally, we comment on how the Type II and III locus meet. Given a flag of isotropic subspaces $I\subset J \subset L$,
the quotient $\Gamma_I\backslash V_I$ contains a tubular neighborhood of the intersection of the closure of the Type II locus $\Gamma_J\backslash V_J$ with the Type III locus. This intersection is a maximal degeneration of the family $T_J\otimes \mathcal{E}$ of abelian varieties over the cusp of the modular curve ${\rm Stab}_\Gamma(J)\backslash \cD_J$ corresponding to $I\subset J$. More explicitly, it is
a Mumford degeneration, whose fan is the quotient fan
associated to the isotropic ray $\overline{J}\subset I^\perp/I$, necessarily a cone of $\Sigma_I$. This fan
is the cone over a periodic polyhedral decomposition of $(J^\perp/J)_\R$ (put at height $1$), which is invariant under the action of ${\rm Stab}_{\overline{J}}(\Gamma_I)$.

A special divisor $Z(\widetilde{\lambda})$ intersects the Type II extension associated to $J$ if and only if we can represent $\widetilde{\lambda}\in J^\perp_{L^\vee}.$ Let $M:=J^\perp/J$. In general, if we denote by $J^{\rm sat}_{L^\vee}$ the saturation of $J$ in $L^\vee$, then the image $\lambda\in J^\perp_{L^\vee}/J^{\rm sat}_{L^\vee}=M^\vee$ may be imprimitive, and we can write $\lambda = u\lambda^{\rm prim}.$ Essentially, the computations
are the same as in the Type III case: The primitive lifts of $\lambda$ correspond to primitive $u$-torsion translates of a character hypersurface $\{\chi_{\lambda^{\rm prim}}=1\}\subset T_J\otimes \mathcal{E}$,
 which is a family of codimension $1$ abelian subvarieties of $T_J\otimes \mathcal{E}$.

\begin{proposition} In any toroidal compactification, the intersection of $Z(\beta,m)$ with the boundary divisor $\Delta_J$ associated to $J$ is $$\bigcup_{\substack{\lambda\in p_L^M(\beta)+M \\ Q(\lambda)=-m}} \{u^2\textrm{ translates of 
 }\chi_{\lambda^{\rm prim}}(z)=1\}$$ \end{proposition}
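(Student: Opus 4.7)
The plan is to adapt the Type III analysis of \Cref{typeiii-special} and \Cref{typeiii-special-cor} to the Type II boundary, using the coordinate description of $\Delta_J$ recalled in \Cref{subsec-ii}. First I would use the parametrization $x = (\delta' + \tau\lambda') + x_0 + (c_1\delta + c_2\lambda)$ to identify $\Delta_J$ (locally) with the zero section $\{c_1 = c_2 = 0\}$ of the disk bundle, fibered over $\mathrm{Stab}_\Gamma(J) \backslash \mathbb{H}$ with abelian variety fibers $T_J \otimes \mathcal{E}_\tau$, prior to the ambient quotient by $\mathrm{Stab}_\Gamma(J)$.

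For a primitive lift $\widetilde\lambda \in J^\perp_{L^\vee}$ of $\lambda \in M^\vee$, the equation $x \cdot \widetilde\lambda = 0$ restricts to $\Delta_J$ as the affine equation $x_0 \cdot \lambda = -\delta' \cdot \widetilde\lambda - \tau\, \lambda' \cdot \widetilde\lambda$, since $\widetilde\lambda \cdot \delta = \widetilde\lambda \cdot \lambda = 0$. Passing to $T_J \otimes \mathcal{E}_\tau$ and exponentiating in $x_0$ identifies this locus as a torsion translate of the character hypersurface $\chi_{\lambda^{\rm prim}}(z) = 1$, where $\lambda^{\rm prim} \in M^\vee$ is the primitivization of $\lambda = u\lambda^{\rm prim}$. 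The translate depends on $\widetilde\lambda$ through its $\delta'$- and $\lambda'$-inner products, and varies holomorphically with $\tau$.

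It remains to enumerate the primitive lifts $\widetilde\lambda \in J^\perp_{L^\vee}$ of $\lambda$ in a fixed discriminant class $\beta \in L^\vee/L$, modulo the $T_J$-action on the equation. The key new feature compared to \Cref{typeiii-special-cor} is that the kernel of $J^\perp_{L^\vee} \to M^\vee$ is $J^{\rm sat}_{L^\vee}$ of rank \emph{two}. Choosing generators of $J^{\rm sat}_{L^\vee}$ aligned with the imprimitivities of $\delta$ and $\lambda$ in $L^\vee$, the coset representatives modulo $T_J$ are indexed by a two-dimensional finite grid $(k_1, k_2)$; applying the divisor-sum identity $\sum_{u'\mid u}\varphi(u')=u$ separately on each axis, together with the equidistribution of $[\widetilde\lambda] \in L^\vee/L$ among the classes pushing to $[\lambda] \in M^\vee/M$ exactly as in the Type III case, yields $u \cdot u = u^2$ distinct translates of $\chi_{\lambda^{\rm prim}}(z) = 1$ per class $\beta$.

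I expect the main obstacle to be the correct interpretation of the character hypersurface $\chi_{\lambda^{\rm prim}}(z) = 1$ on the abelian variety $T_J \otimes \mathcal{E}_\tau$, since a character $\chi_{\lambda^{\rm prim}}$ on the universal cover $M_\C$ does not literally descend under the $\tau T_J$-action. One must verify that the affine equations $x_0 \cdot \lambda = \text{const}(\tau)$ nonetheless assemble into a well-defined Cartier divisor varying holomorphically with $\tau$, and that the $T_J$-translations of the lifts $\widetilde\lambda$ produce divisors which are distinct on $T_J \otimes \mathcal{E}_\tau$ (as opposed to merely on the universal cover). Once this bookkeeping is in place, the computation parallels the Type III case \emph{mutatis mutandis}, with the doubling of rank of $J^{\rm sat}_{L^\vee}$ accounting for the promotion of $u$ to $u^2$ in the final count.
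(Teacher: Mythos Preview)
Your overall plan is sound and closely parallels what the paper does, but the paper itself gives only a one-line justification rather than a formal proof: it simply observes that the quotient of $T_J\otimes\mathcal{E}$ by the sub-abelian variety $\{\chi_{\lambda^{\rm prim}}=1\}$ is an elliptic curve, and the $u$-torsion of an elliptic curve has $u^2$ points. That is the entire content of the paper's argument for the promotion from $u$ to $u^2$.

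Your lift-counting route via the rank-two kernel $J^{\rm sat}_{L^\vee}$ is a legitimate alternative and ultimately equivalent, but the step ``apply the divisor-sum identity $\sum_{u'\mid u}\varphi(u')=u$ separately on each axis'' is not correct as written. Primitivity of $u'\widetilde{\lambda}^{\rm prim}+k_1e_1+k_2e_2$ in $L^\vee$ is the \emph{joint} condition $\gcd(k_1,k_2,u')=1$, not two independent conditions on $k_1$ and $k_2$. The identity you actually need is the Jordan totient relation $\sum_{u'\mid u}J_2(u')=u^2$, where $J_2(n)=\#\{(a,b)\in(\Z/n)^2:\gcd(a,b,n)=1\}$. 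Alternatively, and more simply, you can bypass the primitivity bookkeeping entirely: since $Z(\beta,m)$ already ranges over all (not just primitive) vectors, the total count of lifts of $\lambda$ modulo the translation action is directly the size of the relevant $u$-torsion group, which is $u^2$ because the quotient is an elliptic curve. This is exactly the paper's shortcut, and it saves you from the two-variable Euler/Jordan combinatorics.

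Your flagged obstacle about the meaning of $\chi_{\lambda^{\rm prim}}(z)=1$ on an abelian variety is real, and the paper handles it the way you anticipate: the hypersurface is interpreted as the kernel of the homomorphism $T_J\otimes\mathcal{E}\to\mathcal{E}'$ induced by $\lambda^{\rm prim}\colon M\to\Z$, i.e.\ a family of codimension-one abelian subvarieties. Once you phrase it that way, the translates are indexed by torsion points of the target elliptic curve $\mathcal{E}'$, and the count $u^2$ is immediate.
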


 There are $u^2$ translates rather than $u$ translates
 of the character hypersurface, as the $u$-torsion on an elliptic curve has size $u^2$.
\subsection{Intersection formula} Let $\sigma = \textrm{span}\{c_1,\dots,c_{n-1}\}\in \Sigma_I$ be a codimension $1$ Type III face of a regular fan
and let \begin{align*} \sigma^+ & =\textrm{span}(\sigma,c^+) \\
 \sigma^- & =\textrm{span}(\sigma,c^-) \end{align*}
 be the two maximal cones containing $\sigma$, so that $\sigma^+\cap \sigma^-=\sigma$.
The corresponding open torus orbit is isomorphic to $\C^*$ and the closed stratum is $C_\sigma=\mathbb{P}^1$. A small analytic neighborhood of
$C_\sigma \subset (\Gamma\backslash \cD)^{\Sigma}$ is analytically-locally modeled by the toric variety with two maximal cones $\sigma^+,\sigma^-$, which is isomorphic to \begin{align*} {\rm Spec}\,\C[(\sigma^+)^\vee]\cup {\rm Spec}\,\C[(\sigma^-)^\vee] & \simeq \C^n\cup_{\C^{n-1}\times \C^*}\C^n \\ &\simeq{\rm Tot}(\mathcal{O}_{\mathbb{P}^1}(a_1)\oplus \cdots \oplus \mathcal{O}_{\mathbb{P}^1}(a_{n-1}))\end{align*} for some
integers $a_i\in \Z$.

Since $\sigma^+, \sigma^-$ are standard affine cones, $\det(c_1,\dots,c_{n-1},c^+)=\pm 1$ and
$\det(c_1,\dots,c_{n-1},c^-)=\mp 1$. Thus, there is an expression $$c^++c^- + \sum_{i=1}^{n-1} a_i c_i=0$$ for unique integers $a_i\in \Z$, identical to the integers as above.

\begin{proposition}\label{p:intersection} Let $\lambda$ be primitive in $K^\vee$ and consider $Z(\lambda)$ the Zariski closure of the character hypersurface $$Z^o(\lambda):=\{z\in K\otimes \C^*\,\big{|}\,\chi_\lambda(z)=1\}$$ in the toric variety $X(\sigma^+,\sigma^-)$, with $\sigma, \sigma^{\pm}$ as above. Then $$Z(\lambda)\cdot C_\sigma = \tfrac{1}{2}\left(\textstyle |\lambda\cdot c^+|+|\lambda\cdot c^-| + \sum_i a_i |\lambda\cdot c_i|\right).$$
In particular, if $\lambda\cdot c^{\pm}$ and $\lambda\cdot c_i$ all have the same sign, $Z(\lambda)\cdot C_\sigma=0$. \end{proposition}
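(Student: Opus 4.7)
The plan is to establish a linear equivalence between $Z(\lambda)$ and a rational combination of the five toric boundary divisors of $X(\sigma^+,\sigma^-)$, and then evaluate each term's intersection with $C_\sigma$ by standard toric intersection theory.

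Since $\lambda\in K^\vee$ is primitive, the character $\chi_\lambda\colon K\otimes\C^*\to\C^*$ is surjective with connected kernel, so $Z^o(\lambda)$ is a subtorus of codimension one and $Z(\lambda)$ is an irreducible divisor on the smooth toric variety $X(\sigma^+,\sigma^-)$ appearing with multiplicity one in $\mathrm{div}(\chi_\lambda-1)$. The order of $\chi_\lambda$ along the toric divisor $D_c$ indexed by $c\in\{c_1,\dots,c_{n-1},c^+,c^-\}$ equals $\lambda\cdot c$, so $\chi_\lambda-1$ has order $\min(0,\lambda\cdot c)$ along $D_c$. Since principal divisors are trivial in $\mathrm{Pic}$, this yields
\[
[Z(\lambda)] = \sum_{c}\max(0,-\lambda\cdot c)\,[D_c].
\]
Applying the same argument to $\chi_{-\lambda}-1$ (whose zero locus is again $Z(\lambda)$) gives $[Z(\lambda)] = \sum_c\max(0,\lambda\cdot c)\,[D_c]$, and averaging the two expressions produces the symmetric identity $[Z(\lambda)] = \tfrac{1}{2}\sum_c|\lambda\cdot c|\,[D_c]$.

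Next I would compute the toric intersections with $C_\sigma$ using the local description $X(\sigma^+,\sigma^-)\cong \mathrm{Tot}\bigl(\bigoplus_i\mathcal{O}_{\mathbb{P}^1}(a_i)\bigr)$ over $C_\sigma\cong\mathbb{P}^1$. The divisors $D_{c^\pm}$ are the two fibers of this projection, so $D_{c^\pm}\cdot C_\sigma=1$. The divisor $D_{c_i}$ is the total space of $\bigoplus_{j\neq i}\mathcal{O}(a_j)$, whose associated line bundle restricts to $\mathcal{O}_{\mathbb{P}^1}(a_i)$ on the zero section $C_\sigma$, giving $D_{c_i}\cdot C_\sigma=a_i$. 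Substituting into the averaged formula yields
\[
Z(\lambda)\cdot C_\sigma = \tfrac{1}{2}\bigl(|\lambda\cdot c^+|+|\lambda\cdot c^-|+\textstyle\sum_i a_i|\lambda\cdot c_i|\bigr).
\]
The final assertion is immediate from the defining relation $c^++c^-+\sum_i a_ic_i=0$: if all of $\lambda\cdot c^\pm,\lambda\cdot c_i$ share a common sign, the absolute values can be stripped with a uniform sign, and the expression becomes $\pm\tfrac{1}{2}\,\lambda\cdot(c^++c^-+\sum_i a_ic_i)=0$.

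The main point requiring care is that $Z(\lambda)$ may contain $C_\sigma$, which occurs precisely when $\lambda$ is orthogonal to every generator of $\sigma$, so a transverse intersection is not available in general. The linear-equivalence step above is designed to dodge this: it works entirely in $\mathrm{Pic}(X(\sigma^+,\sigma^-))$ and therefore handles the degenerate case uniformly, which is what makes the argument robust. Primitivity of $\lambda$ enters crucially here, as it is what ensures $Z(\lambda)$ appears with multiplicity exactly one in $\mathrm{div}(\chi_\lambda-1)$.
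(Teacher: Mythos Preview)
Your proof is correct and follows essentially the same approach as the paper's: both establish the linear equivalence $[Z(\lambda)]=\sum_c\max(0,\lambda\cdot c)[D_c]$ (you derive it explicitly from the principal divisor of $\chi_\lambda-1$, the paper states it and suggests verification via cocharacters), both average with the $-\lambda$ version to obtain the absolute-value formula, and both compute $D_{c^\pm}\cdot C_\sigma=1$ and $D_{c_i}\cdot C_\sigma=a_i$ from the total-space description. Your closing remark about the case $C_\sigma\subset Z(\lambda)$ is a nice observation that the paper leaves implicit.
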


\begin{proof} Let $X(\mathfrak{F})$ be a finite type toric variety, and let $v_i$ be primitive integral generators of the
one-dimensional rays of $\mathfrak{F}$. There is a bijection $\Delta_{v_i}\leftrightarrow v_i$ with toric boundary divisors.
 We have a linear equivalence $$Z(\lambda) \equiv \sum_i \textrm{max}\{0,\lambda\cdot v_i\}\Delta_{v_i}$$ which can, for instance, be verified by intersecting $Z(\lambda)$ with 
 cocharacters $v_i\otimes \C^*\subset K\otimes \C^*$. Note
 that replacing $\lambda$ with $-\lambda$ does not change
 the answer, since $$\sum_i \textrm{max}\{0,\lambda\cdot v_i\}\Delta_{v_i}-\sum_i \textrm{max}\{0,-\lambda\cdot v_i\}\Delta_{v_i}=\sum_i (\lambda\cdot v_i)\Delta_{v_i}\equiv 0.$$

Applying this formula to the case at hand, we have
\begin{align*} Z(\lambda)\cdot \mathbb{P}^1 =&\,\textrm{max}\{0,\lambda\cdot c^+\}\Delta_{c^+}\cdot \mathbb{P}^1+\textrm{max}\{0,\lambda\cdot c^-\}\Delta_{c^-}\cdot \mathbb{P}^1 \\ &+\sum_i \textrm{max}\{0,\lambda\cdot c_i\}\Delta_{c_i}\cdot \mathbb{P}^1\end{align*}
where $\mathbb{P}^1=C_\sigma$.
Re-expressing the function ${\rm max}\{0,r\}=\tfrac{1}{2}(|r|+r)$ and noting that $\sum_i(\lambda\cdot c_i) \Delta_{c_i}\equiv 0,$
we get $$Z(\lambda)\cdot \mathbb{P}^1 = \tfrac{1}{2}(|\lambda\cdot c^+|(\Delta_{c^+}\cdot \mathbb{P}^1)+|\lambda\cdot c^-|(\Delta_{c^-}\cdot \mathbb{P}^1)+\textstyle \sum_i |\lambda\cdot c_i|(\Delta_{c_i}\cdot \mathbb{P}^1)).$$

We now describe geometrically the toric boundary divisors associated to $c^{\pm}$, $c_i$. In the total space interpretation $$X(\sigma^+,\sigma^-)={\rm Tot}(\mathcal{O}_{\mathbb{P}^1}(a_1)\oplus\cdots\oplus \mathcal{O}_{\mathbb{P}^1}(a_{n-1})),$$ the divisors
$\Delta_{c^+}$ and $\Delta_{c^-}$ are the fibers over $0,\infty\in \mathbb{P}^1$ and so $\Delta_{c^{\pm}}\cdot \mathbb{P}^1=1$.
The divisor $\Delta_{c_i}$ is ${\rm Tot}(\mathcal{O}_{\mathbb{P}^1}(a_1)\oplus\cdots \oplus 0 \oplus \cdots \oplus \mathcal{O}_{\mathbb{P}^1}(a_{n-1}))$, leaving out the $i$th line bundle factor. Hence the normal bundle to $\Delta_{c_i}$ is the pullback of $\mathcal{O}_{\mathbb{P}^1}(a_i)$ to $X(\sigma^+,\sigma^-)$
and so its restriction to the zero section $\mathbb{P}^1$ has degree $a_i$. We conclude $$Z(\lambda)\cdot C_\sigma = \tfrac{1}{2}(|\lambda\cdot c^+|+|\lambda\cdot c^-|+\textstyle \sum_i a_i|\lambda\cdot c_i|).$$
as desired.\end{proof}

\begin{theorem}\label{t:intersection-formula} Let $C_\sigma\subset X_\Gamma^\Sigma$ be a one-dimensional Type III boundary stratum of the toroidal compactification, corresponding to the cone $\sigma=\sigma^+\cap \sigma^-$ in the hyperbolic lattice $K=I^\perp/I$. We have: \begin{align*}
(Z(0,0)\cdot C_\sigma)\mathfrak{e}_0 &+\sum_{\beta\in L^{\vee}/L}\sum_{m\in -Q(\beta)+\Z} (Z(\beta,m)\cdot C_\sigma)\mathfrak{e}_\beta q^m  \\
&=\sum_{\lambda \in K^\vee}  p^+(\lambda)
p_K^L(\mathfrak{e}_{[\lambda]})q^{-Q(\lambda)} 
\end{align*} where $[\lambda]$ is the class
of $\lambda$ in $K^\vee/K$, and \[p^+(\lambda)=\tfrac{1}{2}(|\lambda\cdot c^+|+|\lambda\cdot c^-|+\sum_i a_i|\lambda\cdot c_i|)\] is the piecewise linear function from \Cref{p:intersection}.
\end{theorem}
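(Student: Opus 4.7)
The plan is to combine the local-analytic description of the special divisors at the Type III cusp (\Cref{typeiii-special-cor}) with the toric intersection calculation of \Cref{p:intersection}, and then repackage the result into the generating series on the right-hand side.

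First, I would observe that $C_\sigma$ lies in the fiber of the natural morphism $X_\Gamma^\Sigma\to X_\Gamma^{\mathrm{BB}}$ over the Type III cusp associated to $I$, so only those components of $Z(\beta,m)$ whose Zariski closure passes through this cusp can have nonzero intersection with $C_\sigma$; in particular $Z(\beta,m)\cdot C_\sigma = Z_I(\beta,m)\cdot C_\sigma$. This reduces the calculation to a purely toric computation in the local model $X(\sigma^+,\sigma^-)\subset X(\Sigma_I)$. By \Cref{typeiii-special-cor}, the pullback of $Z_I^o(\beta,m)$ to this chart is the union, over $\lambda\in p_L^K(\beta)+K$ with $Q(\lambda)=-m$, of $u(\lambda)$ torsion translates of the character hypersurface $\{\chi_{\lambda^{\mathrm{prim}}}=1\}$, where $u(\lambda)$ is the imprimitivity of $\lambda$.

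Next I would apply \Cref{p:intersection} to each such translate. Torsion translates arise from the action of $U_I\otimes\C^*$ on $X(\sigma^+,\sigma^-)$, which preserves $C_\sigma$ setwise; equivalently, they are all linearly equivalent to $Z(\lambda^{\mathrm{prim}})$ and hence intersect $C_\sigma$ with the same multiplicity $p^+(\lambda^{\mathrm{prim}})$. Since $p^+$ is homogeneous of degree one in $\lambda$, summing over the $u(\lambda)$ translates yields a total contribution $u(\lambda)\,p^+(\lambda^{\mathrm{prim}})=p^+(\lambda)$ for each $\lambda$, so that
\[ Z(\beta,m)\cdot C_\sigma \;=\; \sum_{\substack{\lambda\in p_L^K(\beta)+K \\ Q(\lambda)=-m}} p^+(\lambda). \]

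Finally, I would assemble the generating series. By the definition of $p_K^L\colon \C[K^\vee/K]\to\C[L^\vee/L]$ from \Cref{d:pull-push-isotropic}, the element $p_K^L(v_{[\lambda]})$ records precisely those classes $\beta\in L^\vee/L$ satisfying $p_L^K(\beta)=[\lambda]$. Multiplying the previous identity by $v_\beta q^m$ and summing over both $\beta$ and $m$ then collapses the double sum into $\sum_{\lambda\in K^\vee} p^+(\lambda)\,p_K^L(v_{[\lambda]})\,q^{-Q(\lambda)}$, giving the stated identity. The main technical input has already been packaged in \Cref{typeiii-special-cor} and \Cref{p:intersection}; the only residual subtlety is the bookkeeping of the $u(\lambda)$-fold translate multiplicity and the distribution of lifts of $[\lambda]$ across $L^\vee/L$, both of which are absorbed into the statement of \Cref{typeiii-special-cor} together with the homogeneity of $p^+$.
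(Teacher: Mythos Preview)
Your argument is essentially the paper's own proof: restrict to $Z_I(\beta,m)$, invoke \Cref{typeiii-special-cor} to write the pullback as $u(\lambda)$ translates of $Z(\lambda^{\mathrm{prim}})$, apply \Cref{p:intersection}, and use the degree-one homogeneity $u(\lambda)\,p^+(\lambda^{\mathrm{prim}})=p^+(\lambda)$. Two small points the paper makes explicit that you elide: the constant term $Z(0,0)\cdot C_\sigma=0$ must be treated separately (it is $c_1(\mathcal{L}^\vee)$, not a special divisor, and vanishes because $\mathcal{L}$ descends from $X_\Gamma^{\mathrm{BB}}$ while $C_\sigma$ is contracted), and the passage from the quotient $\Gamma_I\backslash X(\Sigma_I)$ to the toric cover $X(\Sigma_I)$ is justified by noting that $\Gamma_I$ freely permutes the copies of $C_\sigma$ in its orbit, so one may compute against a single lift.
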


\begin{proof} Since $C_\sigma$
is contracted in $X_\Gamma^{\rm BB}$, we have $Z(0,0)\cdot C_\sigma=0$ because the Hodge bundle is ample on $X_\Gamma^{\rm BB}$. Next, observe that only the components $Z_I(\beta,m)\subset Z(\beta,m)$ entering the Baily-Borel cusp associated to $I$ can possibly intersect $\mathbb{P}^1=C_\sigma$. We wish to compute the intersection number $$Z(\beta,m)\cdot_{X_\Gamma^\Sigma}C_\sigma=Z_I(\beta,m)\cdot_{\Gamma_I\backslash X(\Sigma_I)}C_\sigma = Z_I(\beta,m)\cdot_{X(\Sigma_I)} C_\sigma.$$
The first equality holds from the construction of the toroidal compactification, while the second equality holds because the action of $\Gamma_I$ freely permutes the infinitely many $\mathbb{P}^1$s in the orbit $\Gamma_I\cdot C_\sigma$. Thus, to compute the intersection number with $Z_I(\beta,m)$ in $\Gamma_I\backslash X(\Sigma_I)$, it suffices to pull back $Z_I(\beta,m)$ to the toric variety $X(\Sigma_I)$, then intersect the pullback with a single
orbit representative $C_\sigma$.

On the toric variety $X(\Sigma_I)$, we have by Corollary \ref{typeiii-special-cor} that
$$Z_I(\beta,m)\sim_{\rm num} \sum_{\substack{\lambda\in p_L^K(\beta)+K \\ Q(\lambda)=-m}} u(\lambda) Z(\lambda^{\rm prim})$$ where $Z(\lambda^{\rm prim})$
is the closure of the character hypersurface and $u(\lambda)$ is the imprimitivity of $\lambda$. While this divisor is not finite, it is locally finite in a neighborhood of the Type III toric boundary.

We take the intersection
with $C_\sigma$ using \Cref{p:intersection}. It gives $$Z_I(\beta,m)\cdot_{X(\Sigma_I)} C_\sigma = \sum_{\substack{\lambda\in p_L^K(\beta)+K \\ Q(\lambda)=-m}} u(\lambda)p^+(\lambda^{\rm prim}),$$
which ends up as a sum with only finitely many nonzero terms.
Finally, we observe that $p^+(r\lambda)=rp^+(\lambda)$ for any $r\in \R^+$ and so $u(\lambda)p^+(\lambda^{\rm prim})=p^+(\lambda)$. The theorem follows. \end{proof}

\section{Vanishing theorems and splitting criteria}
We prove in this section the splitting statement \ref{t:algsplitting} which allows us to prove modularity separately in the boundary and in the interior of $X_\Gamma^{\Sigma}$. We refer to \cite[Lem.~26]{greer} for a similar application. 
\subsection{Vanishing from super-rigidity}\label{s:superR}
The goal of this subsection is to prove that the first Chern class map
\[c_1:\Pic(X_\Gamma^\Sigma )_\Q \to H^{1,1}(X_\Gamma^\Sigma,\Q ) \]
is an isomorphism. This implies that our modularity results for special cycles automatically hold at the level of the $\Q$-Picard group, even though the methods are topological. Using the exponential exact sequence and the Hodge decomposition, it suffices to show that
\[ H^1( X_\Gamma^\Sigma,\Q ) = 0.\]
First, observe that by deforming loops away from the boundary strata that have real codimension 2, we have surjectivity of the pushforward
\[ \pi_1(X_\Gamma) \to \pi_1( X_\Gamma^\Sigma). \]
Since $H_1$ is the abelianization of $\pi_1$, it suffices to prove that the interior $X_\Gamma = \Gamma\backslash \cD$ satisfies
\[H^1(X_\Gamma,\Q)=0.\]
Since $\cD$ is contractible, this coincides with group cohomology of the arithmetic group $\Gamma$. A consequence of Margulis super-rigidity \cite[\S 7.6, Corollary 7.6.17]{margulis}, see also \cite{davewitte}, is the following.

\begin{theorem}
Let $G$ be a semisimple Lie group of rank $r\geq 2$ and $\Gamma\subset G$ an irreducible arithmetic subgroup. Then we have $H^1(\Gamma,V)=0$ for any $\Gamma$-module $V$.
\end{theorem}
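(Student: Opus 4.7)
The theorem as stated is Corollary 7.6.17 of \cite{margulis}, so a proof sketch amounts to explaining how it is deduced from Margulis' super-rigidity theorem together with standard vanishing for semisimple groups. The plan is in three steps: reduce to finite-dimensional $V$, extend the action from $\Gamma$ to $G$ via super-rigidity, and invoke vanishing of $H^1$ for semisimple Lie groups with finite-dimensional coefficients.

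First I would note that any class in $H^1(\Gamma,V)$ is represented by a cocycle taking values in a finitely generated $\Gamma$-submodule, and that $H^1$ commutes with filtered colimits of coefficient modules. This reduces the statement to the case of a finite-dimensional $\Gamma$-module $V$ over $\R$ or $\C$. (For the particular application used elsewhere in the paper, namely $V=\Q$ with trivial action, one can bypass this step: $H^1(\Gamma,\Q)=\Hom(\Gamma^{\mathrm{ab}},\Q)$, and $\Gamma^{\mathrm{ab}}$ is finite because every higher-rank irreducible lattice has Kazhdan's property (T), which immediately gives the desired vanishing.)

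For finite-dimensional $V$, the key input is Margulis super-rigidity: after replacing $\Gamma$ by a finite-index normal subgroup $\Gamma'$ and twisting by a representation with compact image (which does not affect the $\Q$-cohomology in degree one), the $\Gamma'$-action on $V$ is the restriction of a continuous rational representation $\tilde{\rho}\colon G\to \mathrm{GL}(V)$. Then $H^1(\Gamma',V)$ can be computed in terms of continuous group cohomology via the Eckmann--Shapiro / Van Est framework: since $G$ is semisimple, Whitehead's first lemma gives $H^1(\mathfrak{g},V)=0$, and hence $H^1_{\mathrm{cts}}(G,V)=0$, from which one deduces $H^1(\Gamma',V)=0$ by the standard comparison for arithmetic lattices (this is the content of the proof of Cor.~7.6.17 in loc.\ cit.). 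Finally, the Lyndon--Hochschild--Serre spectral sequence for $\Gamma'\triangleleft \Gamma$ with finite quotient shows that $H^1(\Gamma,V)$ injects into $H^1(\Gamma',V)^{\Gamma/\Gamma'}=0$ after tensoring with $\Q$.

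The main obstacle is of course the invocation of super-rigidity itself, which is deep and uses either ergodic-theoretic arguments (in the non-uniform case) or algebraic ones (in the uniform case); in the present context it is a black-box input. A secondary subtlety is handling the compact-image twist in the super-rigidity statement, but since compact groups have vanishing rational cohomology in positive degree, this contributes nothing to $H^1$ and so does not affect the conclusion.
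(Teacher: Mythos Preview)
The paper does not give its own proof of this theorem; it is quoted as a black box from \cite[\S 7.6, Corollary 7.6.17]{margulis}, with a pointer to the expository notes \cite{davewitte}. Your proposal is therefore not competing with a proof in the paper but is an independent sketch of how the cited result is obtained. The outline you give---extend the representation to $G$ via super-rigidity, then use Whitehead/van~Est vanishing for semisimple $G$, then descend along a finite-index inclusion via Hochschild--Serre---is the standard route and is in the spirit of the argument in Margulis.

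There is, however, a gap in your first reduction step. You correctly observe that a cocycle on the finitely generated group $\Gamma$ takes values in a finitely generated $\Gamma$-submodule and that $H^1$ commutes with filtered colimits, but a finitely generated $\Gamma$-module is in general \emph{not} finite-dimensional (for instance $\Z[\Gamma]$ itself). So the passage from ``arbitrary $V$'' to ``finite-dimensional $V$'' is not justified by the argument you give. For the paper's application this is harmless: only $V=\Q$ with trivial action is needed, and there your parenthetical remark is exactly right---property~(T) (or the normal subgroup theorem) forces $\Gamma^{\mathrm{ab}}$ to be finite, hence $H^1(\Gamma,\Q)=\Hom(\Gamma^{\mathrm{ab}},\Q)=0$. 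If you want to defend the theorem in the generality stated, you should either supply the missing reduction or check that the cited corollary in Margulis is really formulated for arbitrary coefficient modules rather than only finite-dimensional ones over a local field.
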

Since $O(p,q)$ has rank $\min(p,q)$, the desired vanishing statement holds for any $\Gamma\subset O(2,n)$ for $n>2$ and $V=\Q$ the trivial module. In the case $n=2$, $O(2,2)$ is isogenous to $SL(2)\times SL(2)$, so we need to assume that $\Gamma$ is not commensurable to a product $\Gamma_1\times \Gamma_2$. 

\subsection{Splitting homology classes}
Let $X$ be a smooth complex variety, $\Delta = \bigcup_i \Delta_i\subset X$ a simple normal crossings divisor, and $U = X\setminus \Delta$ its open complement. A basic question is: which rational homology classes $\alpha \in H_2(X,\Q)$ can be expressed as $\alpha = \alpha_0 + \alpha_1$
where $\alpha_0 \in H_2(U,\Q)$ and $\alpha_1 \in H_2(\Delta,\Q)$? In this case, we say that $\alpha$ {\it splits}. Consider the covering of $X$ by two open subsets: 
$$X = U\cup N_\Delta$$ where $N_\Delta$ is a tubular neighborhood of $\Delta$. Then $\Delta$ is a deformation retract of $N_\Delta$. The Mayer-Vietoris sequence for rational homology with this covering reads
\[ H_2(U) \oplus H_2(N_\Delta) \to H_2(X) \overset{\delta}\to H_1(N_\Delta^*) \to H_1(U)\oplus H_1(N_\Delta) \]
where $N_\Delta^*$ denotes $N_\Delta \setminus \Delta$. By exactness, $\alpha \in H_2(X)$ can be split as above if and only if $\delta(\alpha)=0$. Since $H_1(U)$ always vanishes in our desired application, we will use the following criterion whose proof is immediate from the Mayer-Vietoris sequence.
\begin{proposition}\label{splitcriterion}
If the pushforward map $H_1(N_\Delta^*) \to H_1(N_\Delta)$ is injective, then any class $\alpha\in H_2(X)$ splits.
\end{proposition}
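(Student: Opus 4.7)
The plan is to read off the statement directly from the Mayer--Vietoris sequence already displayed. By exactness at $H_2(X)$, a class $\alpha\in H_2(X)$ lies in the image of $H_2(U)\oplus H_2(N_\Delta)$ --- that is, splits as $\alpha_0+\alpha_1$ --- if and only if $\delta(\alpha)=0$. So it suffices to show that the connecting map $\delta$ is identically zero under the hypothesis.

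To this end, I would use exactness at the next spot $H_1(N_\Delta^*)$. The subsequent map is the difference of the two inclusion-induced maps $i_*\colon H_1(N_\Delta^*)\to H_1(U)$ and $j_*\colon H_1(N_\Delta^*)\to H_1(N_\Delta)$, sending $x\mapsto (i_*x,-j_*x)$. Exactness gives $\mathrm{im}(\delta)=\ker(i_*,-j_*)\subseteq \ker(j_*)$. Under the hypothesis that $j_*$ is injective, the right-hand side is zero, hence $\mathrm{im}(\delta)=0$ and every class $\alpha\in H_2(X)$ splits.

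There is essentially no obstacle here; the statement is a one-line diagram chase in the already-given Mayer--Vietoris sequence. The only thing to be a little careful about is identifying the Mayer--Vietoris map $H_1(N_\Delta^*)\to H_1(U)\oplus H_1(N_\Delta)$ with the difference of inclusions, but this is standard. The real content of the proposition lies in verifying the injectivity hypothesis in the geometric situations of interest later in the paper, not in this purely formal implication.
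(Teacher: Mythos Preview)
Your proof is correct and is exactly the argument the paper has in mind: the paper states the proposition as ``immediate from the Mayer--Vietoris sequence'' (with the splitting criterion $\delta(\alpha)=0$ already recorded just before), and your diagram chase simply spells this out. No differences to report.
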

In the case where $\Delta$ is irreducible and smooth, this criterion is easily checked using a Gysin sequence.
\begin{proposition}\label{irredcriterion}
If $\Delta$ is irreducible and smooth, then the pushforward map $H_1(N_\Delta^*) \to H_1(N_\Delta)$ is injective if and only if the normal bundle $N_{\Delta/X}$ has non-trivial Euler class.
\end{proposition}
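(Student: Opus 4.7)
The plan is to reduce the statement to the Gysin sequence of the unit circle bundle of the normal bundle. Since $\Delta$ is smooth, a choice of Hermitian metric identifies a tubular neighborhood $N_\Delta$ with a disk bundle in the complex line bundle $N_{\Delta/X}$, and $N_\Delta^*$ deformation retracts onto the associated unit circle bundle $\pi\colon S \to \Delta$. Under this identification, the pushforward $H_1(N_\Delta^*,\Q) \to H_1(N_\Delta,\Q)$ becomes $\pi_*\colon H_1(S,\Q) \to H_1(\Delta,\Q)$.

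Next, I would invoke the rational homology Gysin sequence for the oriented $S^1$-bundle $S \to \Delta$ with Euler class $e \in H^2(\Delta,\Q)$:
\begin{align*}
 H_2(\Delta,\Q) \xrightarrow{\,\cap e\,} H_0(\Delta,\Q) \xrightarrow{\,\iota_*\,} H_1(S,\Q) \xrightarrow{\,\pi_*\,} H_1(\Delta,\Q) \to 0,
\end{align*}
where $\iota_*$ sends the class of a point of $\Delta$ to the class of the fiber circle over that point. In particular, $\ker\pi_* = \operatorname{im}\iota_* = \operatorname{coker}(\cap e)$.

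Finally, since $\Delta$ is irreducible we have $H_0(\Delta,\Q) = \Q$, and the cap product with $e$ is the linear functional $\alpha \mapsto \langle e, \alpha\rangle$ on $H_2(\Delta,\Q)$. By rational Poincar\'e duality on $\Delta$ (it is smooth and connected, hence a rational Poincar\'e duality space, at least once one restricts to a compact neighborhood or passes to a compactification---and the Euler class is detected integrally over any smooth compact subvariety), this map is surjective onto $\Q$ precisely when $e \neq 0$ in $H^2(\Delta,\Q)$. Thus $\ker\pi_* = 0$ if and only if $e \neq 0$, which is exactly the stated criterion.

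The only potentially delicate point is the Poincar\'e duality step: in the intended application $\Delta$ is a boundary divisor of the toroidal compactification which is itself typically smooth and proper (after the neatness reductions of Remark \ref{r:simplicial}), so $H_2(\Delta,\Q) \to H_0(\Delta,\Q)$ via $\cap e$ is nonzero iff $e \neq 0$; if one wants the statement purely locally, one should phrase the Euler class as an element of $H^2(\Delta,\Q)$ with respect to which $\cap e$ is tested, and the equivalence is a tautology. This is the step I would be most careful to state cleanly.
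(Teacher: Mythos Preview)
Your argument is essentially identical to the paper's: reduce to the circle bundle, apply the Gysin sequence, and observe that $\ker\pi_*$ is the cokernel of $\cap e\colon H_2(\Delta,\Q)\to H_0(\Delta,\Q)\cong\Q$.

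Your only hesitation, the Poincar\'e duality step, is a red herring. No duality or compactness is needed: over $\Q$ the universal coefficient theorem gives a canonical isomorphism $H^2(\Delta,\Q)\cong\Hom(H_2(\Delta,\Q),\Q)$, so $e\neq 0$ in $H^2(\Delta,\Q)$ is \emph{equivalent} to the existence of some $\alpha\in H_2(\Delta,\Q)$ with $\langle e,\alpha\rangle\neq 0$, i.e.\ to surjectivity of $\cap e$ onto $\Q$. The paper's proof simply asserts this equivalence without comment; you can do the same, or cite universal coefficients, and drop the hedging about compactifications.
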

\begin{proof}
Since $\Delta$ is smooth, the punctured neighborhood $N^*_\Delta$ retracts to a circle bundle over $\Delta$. The Gysin sequence for $N^*_\Delta \to \Delta$ reads:
$$H_2(\Delta) \to H_0(\Delta) \to H_1(N^*_\Delta) \to H_1(\Delta) \to 0.$$
The first map in this sequence is given by cap product with $e(N_{\Delta/X})$. Since $H_0(\Delta)\simeq \Q$, this map is surjective if and only if $e(N_{\Delta/X})\neq 0$. By exactness of the Gysin sequence, this in turn is equivalent to injectivity of the map $H_1(N^*_\Delta) \to H_1(\Delta)\simeq H_1(N_\Delta)$.
\end{proof}

In the case where $\Delta$ is reducible, $N_\Delta$ is a plumbing of normal bundles to the irreducible components $\Delta_i$. The splitting criterion can fail to be satisfied even when the individual $\Delta_i$ have non-trivial normal bundles, e.g. for $\Delta$ the triangle of lines in $\P^2$ we get $N^*_\Delta$ deformation retracts to $T^3$ while $H_1(\Delta)\simeq \Q$, so $H_1(N^*_\Delta) \to H_1(\Delta)$ cannot be injective.\\

To check the criterion of Prop. \ref{splitcriterion} in the context of toric boundary divisors, we introduce the moment map. Let $(X,\Delta)$ be a projective toric variety of dimension $n$ defined by a lattice polytope $P$ in the character lattice of $X$. The Hamiltonian action of $T^n\subset (\C^*)^n$ on $X$ defines a surjective map
\[\mu: X \to P\]
whose fiber at an interior point of $P$ is $T^n$, canonically identified with the cocharacter lattice quotient. The fiber at a point in the relative interior of a face of codimension $k$ is $T^{n-k}$. The pre-image of the boundary $\partial P\subset P$ is the toric boundary divisor $\Delta$:
\[ \mu^{-1}(\partial P) = \Delta = \textstyle \bigcup_i \Delta_i \]
with each facet $P_i\subset \partial P$ giving a component $\Delta_i\subset \Delta$. From this description, the punctured neighborhood $N^*_\Delta$ retracts to $\mu^{-1}(S^{n-1})$ which is diffeomorphic to $S^{n-1}\times T^n$.\\

Consider now the Type III boundary divisors $\Delta_I=\cup_c \Delta_{I,c}$ of a regular toroidal compactification $X_\Gamma^\Sigma$ lying over a Type III cusp of $X_\Gamma^{\rm BB}$. Here $\R_{\geq 0}c\in\Sigma_I$ are the one-dimensional cones of the fan $\Sigma_I$ supported on the rational closure of the positive cone $C_K^+\subset K_\R$, $K=I^\perp/I$.

To get a polytope from $\Sigma_I$ requires a convex $\Z$-piecewise linear function $f\colon C_K\to \R$ whose bending locus is $\Sigma_I$ and for which $\gamma^*f-f$ is linear for all $\gamma\in \Gamma_I$. Such a function may not exist, as $X_\Gamma^\Sigma$ need not be projective for some $\Sigma$. But, there is a combinatorial model for what the polytope would be: the dual polyhedral complex $(\Sigma_I)^\vee$ which has a cell $\R_+^{{\rm codim}(\sigma)}$ for each cone $\sigma\in \Sigma_I$. There is still a fibration $$\mu\colon X(\Sigma_I)\to (\Sigma_I)^\vee$$ whose fiber over the relative interior of a codimension $k$ stratum of $(\Sigma_I)^\vee$ is $T^{n-k}$.

Consider an analytic neighborhood of Type III toric boundary in the unipotent quotient $U_I\backslash \mathbb{D}$, called $V_I\subset X(\Sigma_I)$ in \Cref{s:tor-comp}. We have a restricted fibration $$\mu\colon V_I \to P_I\subset (\Sigma_I)^\vee.$$ The action of the group $\Gamma_I={\rm Stab}_\Gamma(I)/U_I$ can be made equivariant with respect to the fibration $\mu$, as it
 acts torically.

The punctured neighborhood $A_I= V_I\cap (U_I\otimes \C^*)$ of the toric boundary fibers over the open cell $O=\R^n_+\cap P_I$ and is thus a trivial fiber bundle $T^n\times O$. The fibers of $\mu$ are, in the toric coordinates $(z_1,\dots,z_n)\in U_I\otimes \C^*$ given by $|z_i|=r_i$ for $r_i\in \R_+$. So the action of $\Gamma_I$ on $T^n\times O$ is, on the $T_n$ factor, induced by the action of $\Gamma_I$ on $I^\perp/I\supset U_I=H_1(T_n,\Z)$.

Let $\Delta= X_\Gamma^\Sigma\setminus X_\Gamma$. The inclusion $N_\Delta^* \to N_\Delta$ is locally modeled by $$\Gamma_I\backslash A_I\to \Gamma_I\backslash V_I$$ in a neighborhood of the cusp $I$. Note that $\Delta_I\subset\Gamma_I\backslash (V_I\setminus A_I)$ and the latter retracts to the former, by $\Gamma_I$-equivariantly retracting the Type II components of $V_I\backslash A_I$ to their intersection with the Type III locus.

Let $\widetilde{\Delta}_I=V_I\setminus A_I$ be the $\Gamma_I$-cover of $\Delta_I$. The deformation retraction $V_I\to \widetilde{\Delta}_I$ can be made $\Gamma_I$-equivariant.
 

\begin{proposition}\label{groupcoho}
 The morphism $N^*_{\Delta_I} \to \Delta_I$ induces an isomorphism on $H_1$. Both can be identified with the group homology $H_1(\Gamma_I,\Q)$.
\end{proposition}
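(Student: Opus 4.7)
The plan is to apply the Cartan--Leray spectral sequence to both $\Gamma_I$-covers $A_I\to N^*_{\Delta_I}$ and $V_I\to N_{\Delta_I}\simeq\Delta_I$, and to extract $H_1$ from the 5-term exact sequence in each case. After passing to a neat finite-index subgroup as in \Cref{r:simplicial}, the action of $\Gamma_I$ is free.

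First I would pin down the homotopy type and the $\Gamma_I$-module structure on $H_1$ of the two total spaces. Since the moment map $\mu\colon A_I\to O$ is a trivial $T^n$-bundle over the contractible open cell $O=\R_+^n\cap P_I$, we have $A_I\simeq T^n$ and $H_1(A_I,\Q)=U_I\otimes\Q=K_\Q$ with its natural $\Gamma_I$-action. For $V_I$, the $\Gamma_I$-equivariant deformation retraction $V_I\to\widetilde{\Delta}_I$ already recorded in the text reduces the question to computing $\pi_1(V_I)$. Viewing $V_I$ as the $\Gamma_I$-equivariant open tubular neighborhood of the Type III strata inside the toric variety $X(\Sigma_I)$, van Kampen (applied stratum by stratum) yields
\[\pi_1(V_I)=U_I\big/\langle c\in U_I : \R_{\geq 0}c\in\Sigma_I,\ c\in C_K\rangle.\]
The rays of $\Sigma_I$ meeting the open positive cone $C_K$ span $K_\R$, so their primitive generators form a finite-index subgroup of $U_I$. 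Therefore $\pi_1(V_I)$ is torsion, and consequently $H_1(V_I,\Q)=H_1(\widetilde{\Delta}_I,\Q)=0$.

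Feeding these computations into the Cartan--Leray spectral sequence $E^2_{p,q}=H_p(\Gamma_I,H_q(Y,\Q))\Rightarrow H_{p+q}(\Gamma_I\backslash Y,\Q)$, the 5-term exact sequences read
\begin{align*}
&H_2(\Gamma_I,\Q)\longrightarrow (K_\Q)_{\Gamma_I}\longrightarrow H_1(N^*_{\Delta_I},\Q)\longrightarrow H_1(\Gamma_I,\Q)\longrightarrow 0,\\
&H_2(\Gamma_I,\Q)\longrightarrow 0\longrightarrow H_1(\Delta_I,\Q)\longrightarrow H_1(\Gamma_I,\Q)\longrightarrow 0.
\end{align*}
The second sequence already gives $H_1(\Delta_I,\Q)\cong H_1(\Gamma_I,\Q)$. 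For the first I need the coinvariants $(K_\Q)_{\Gamma_I}$ to vanish: since $K_\Q$ is a non-trivial irreducible $O(K)(\Q)$-representation (for $n\geq 2$) and the arithmetic subgroup $\Gamma_I\subset O^+(K)$ is Zariski dense by Borel density, we have $(K_\Q)^{\Gamma_I}=0$, and by self-duality of $K_\Q$ also $(K_\Q)_{\Gamma_I}=0$. Hence $H_1(N^*_{\Delta_I},\Q)\cong H_1(\Gamma_I,\Q)$ as well.

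Finally, the $\Gamma_I$-equivariant composition $A_I\hookrightarrow V_I\to\widetilde{\Delta}_I$ descends to the map $N^*_{\Delta_I}\to\Delta_I$ of the statement. By naturality of Cartan--Leray the two edge quotients onto $H_1(\Gamma_I,\Q)$ match up, so the pushforward $H_1(N^*_{\Delta_I},\Q)\to H_1(\Delta_I,\Q)$ is an isomorphism, both sides being canonically identified with the group homology $H_1(\Gamma_I,\Q)$. The main obstacle is the vanishing of $(K_\Q)_{\Gamma_I}$, which rests on Zariski density of $\Gamma_I$ in $O^+(K)$---the same ingredient invoked in \Cref{s:superR}.
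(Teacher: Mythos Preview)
Your proof is correct and follows essentially the same approach as the paper: both apply the Cartan--Leray spectral sequence to the $\Gamma_I$-covers $A_I$ and $V_I\simeq\widetilde{\Delta}_I$, use functoriality to compare the resulting exact sequences, and invoke Borel density to kill $(K_\Q)_{\Gamma_I}$. The only cosmetic difference is that the paper argues $H_1(\widetilde{\Delta}_I,\Q)=0$ directly from the fact that $\widetilde{\Delta}_I$ is a union of toric varieties glued along toric strata, whereas you compute $\pi_1(V_I)$ via van Kampen and show it is torsion; both arguments are valid and yield the same vanishing.
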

\begin{proof}
The Leray-Cartan spectral sequence computes the homology of an orbifold quotient $X/G$ as follows:
\[ H_p(G, H_q(X,\Q)) \Rightarrow H_{p+q}(X/G,\Q). \]
The $E_2$ page yields an exact sequence for computing $H_1$:
\[H_0(G,H_1(X,\Q)) \to H_1(X/G,\Q) \to H_1(G,H_0(X,\Q)) \to 0. \]
The spectral sequence is functorial for $G$-equivariant morphisms, so applying this to the $\Gamma_I$-maps $A_I \to V_I\to \widetilde{\Delta}_I$ we get a morphism of exact sequences:
\[\xymatrix{
H_0(\Gamma_I,H_1(T^n,\Q)) \ar[r]\ar[d] & H_1(N^*_{\Delta_I},\Q) \ar[r]\ar[d] & H_1(\Gamma_I,\Q) \ar[r]\ar[d] & 0\\
H_0(\Gamma_I,H_1(\widetilde{\Delta}_I,\Q)) \ar[r] & H_1(\Delta_I,\Q) \ar[r] & H_1(\Gamma_I,\Q) \ar[r] & 0\\
}\]
We claim both groups on the far left vanish. First, $H_1(\widetilde{\Delta}_I,\Q)=0$ because $\widetilde{\Delta}_I$ is a union of toric varieties along toric overlaps. The upper left group is $H_0(\Gamma_I,(U_I)_\Q)\simeq H^0(\Gamma_I,K_\Q)^*$, the invariant subspace of the standard representation. As $\Gamma_I\subset O^+(K)$ is finite index, the Borel Density Theorem implies it acts without fixed vectors on $K$.
\end{proof}

\begin{theorem}\label{t:splitting}
Let $X = X_\Gamma^\Sigma$, $U=X_\Gamma$, and $\Delta=X_\Gamma^\Sigma\setminus X_\Gamma$. Then every homology class $\alpha\in H_2(X,\Q)$ splits.
\end{theorem}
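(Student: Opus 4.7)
By Proposition \ref{splitcriterion}, it suffices to show that the pushforward $H_1(N_\Delta^*,\Q)\to H_1(N_\Delta,\Q)$ is injective, where $\Delta = X_\Gamma^\Sigma\setminus X_\Gamma$. Since $\Delta$ is reducible and its components meet non-trivially, the direct criterion of Proposition \ref{irredcriterion} does not apply, so the plan is to cover a tubular neighborhood of $\Delta$ by the analytic pieces $\Gamma_I\backslash V_I$ around Type III cusps and $\Gamma_J\backslash V_J$ around Type II cusps, verify the local injectivity on each piece, and then glue via Mayer--Vietoris.

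For a Type III cusp $I$, the desired injectivity is exactly the content of Proposition \ref{groupcoho}, which identifies both $H_1(\Gamma_I\backslash A_I,\Q)$ and $H_1(\Gamma_I\backslash V_I,\Q)$ with $H_1(\Gamma_I,\Q)$. For a Type II cusp $J$, I would replicate the same Leray--Cartan spectral sequence argument: the punctured neighborhood $\Gamma_J\backslash A_J$ retracts to a circle bundle over the abelian fibration $\Delta_J=\Gamma_J\backslash(T_J\otimes\mathcal{E})$, and comparing the Leray--Cartan $E_2$-pages for $\Gamma_J\backslash A_J$ and $\Gamma_J\backslash V_J$ reduces the claim to the vanishing of the $S^1$-fiber contribution in the Gysin sequence of the circle bundle. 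That vanishing is precisely the non-triviality of the Euler class of the normal line bundle to $\Delta_J$ restricted to each abelian fiber, which follows from the toroidal construction of $V_J$ in Section \ref{subsec-ii} as a disk bundle whose first Chern class on each elliptic factor of the fiber is the period of the Type II vanishing cycle. The upper-left group in the analogue of the diagram in Proposition \ref{groupcoho} is then killed exactly as there, with the Borel density of $\Gamma_J\subset O(J^\perp/J)$ eliminating the invariant subspace of the standard representation. To glue the local statements, I apply a Mayer--Vietoris argument to the cover $\{\Gamma_I\backslash V_I\}\cup\{\Gamma_J\backslash V_J\}$; the overlaps for flags $I\subset J$ are the Mumford degeneration neighborhoods described in Section \ref{subsec-ii}, and the same spectral sequence analysis (now with ${\rm Stab}_{\overline{J}}(\Gamma_I)$ in place of $\Gamma_I$ or $\Gamma_J$) handles them.

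The main obstacle will be the Euler-class computation at the Type II stratum, because $\Delta_J$ is non-compact---it meets Type III divisors along the Mumford degeneration loci---so one cannot simply pair the Euler class with a global fundamental class. Instead one must verify that the first Chern class of the normal bundle pairs non-trivially with the fundamental class of each abelian fiber of $\Delta_J\to{\rm Stab}_\Gamma(J)\backslash\mathbb{H}$, which is where the positivity of the period integral in the toroidal construction enters. Once this positivity is in place, the spectral-sequence bookkeeping parallels Proposition \ref{groupcoho}, and combining the local and overlap injectivities via Mayer--Vietoris yields injectivity of $H_1(N_\Delta^*,\Q)\to H_1(N_\Delta,\Q)$ globally, which is what is needed.
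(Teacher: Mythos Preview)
Your strategy differs from the paper's and contains a concrete error. The paper does not attempt to verify injectivity of $H_1(N_\Delta^*)\to H_1(N_\Delta)$ for the full boundary $\Delta$ at once. Instead it performs a two-stage splitting: first it splits $\alpha$ relative to $\Delta_{\rm II}=\coprod_J\Delta_J$ alone, applying Proposition~\ref{irredcriterion} directly (each $\Delta_J$ is smooth irreducible, and its normal bundle is anti-ample on the abelian fibers of $X_\Gamma^\Sigma\to X_\Gamma^{\rm BB}$, hence has nonzero Euler class). Then, working inside the pair $(X\setminus\Delta_{\rm II},\,\Delta_{\rm III}\setminus\Delta_{\rm II})$, it applies Proposition~\ref{splitcriterion} using Proposition~\ref{groupcoho}. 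This two-step reduction eliminates any need for a Mayer--Vietoris gluing over the flag overlaps $I\subset J$, which in your outline is only asserted, not carried out.

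Your Type~II analysis has a genuine mistake: you invoke ``Borel density of $\Gamma_J\subset O(J^\perp/J)$'' to kill the upper-left coinvariant term, but $M=J^\perp/J$ is negative definite of rank $n-2$, so $O(M)$ is a \emph{finite} group and Borel density says nothing. If one insists on the Leray--Cartan framing, the vanishing of the relevant $\Gamma_J$-coinvariants of $H_1(T_J\otimes\mathcal{E})$ comes instead from the $SL(J)$-factor acting on $H_1$ of the elliptic fiber (a finite-index subgroup of $SL_2(\Z)$ has no coinvariants on its standard representation). But this entire detour is unnecessary: for a smooth irreducible $\Delta_J$ the Gysin/Euler-class computation you describe \emph{is} Proposition~\ref{irredcriterion}, and the paper simply quotes it. Once Type~II is peeled off this way, only the Type~III piece remains, where Proposition~\ref{groupcoho} applies verbatim and no overlap bookkeeping is required.
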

\begin{proof}
Let $\Delta_{\rm II}:=\bigcup_J \Delta_J$ and $\Delta_{\rm III}:=\bigcup_I \Delta_I$ be the unions of the Type II and III boundary components, respectively. We have $\Delta=\Delta_{\rm II}\cup \Delta_{\rm III}$.

Assuming $\Sigma$ is taken fine enough, $\Delta_{\rm II}=\coprod_J \Delta_J$ is a disjoint union of smooth divisors, in bijection with orbits of isotropic planes $J\subset L$. On the other hand, the union $\Delta_{\rm III}=\coprod_I \bigcup_c \Delta_{I,c}$ is a disjoint union of SNC divisors, with one connected SNC divisor for each orbit of isotropic line $I\subset L$. The dual complex of this divisor $\bigcup_c \Delta_{I,c}$ is the hyperbolic orbifold $\Gamma_I\backslash \mathbb{P}C_K$. 

Applying Proposition \ref{irredcriterion} to the Type II components, we can split $\alpha$ relative to the boundary divisor $\Delta_{\rm II}$. The normal bundle of each component is anti-ample on the fibers of the contraction map to $X_\Gamma^{\rm BB}$, so it has non-trivial Euler class, as required.

Next, we apply Proposition \ref{splitcriterion} to the pair $(X \setminus \Delta_{\rm II} \,,\,  \Delta_{\rm III}\setminus \Delta_{\rm II})$. The map $H_1(N^*_\Delta) \to H_1(N_\Delta)$ is an isomorphism using Proposition \ref{groupcoho}.
\end{proof}

We have represented every class in $H_2(X_\Gamma^\Sigma,\Q)$ as a linear combination of classes supported on either $X_\Gamma$ or $\Delta$. We require a further strengthening of this:

\begin{theorem}\label{t:algsplitting}
Every homology class $\alpha\in H_2(X^\Sigma_\Gamma,\Q)$ can be represented as the sum of a cycle supported on the interior $X_\Gamma$ and an algebraic cycle contracted by the map to $X_\Gamma^{\rm BB}$.
\end{theorem}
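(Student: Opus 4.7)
The plan is to refine the splitting from \Cref{t:splitting}. Starting from $\alpha = \alpha_0 + \alpha_1$ with $\alpha_0 \in H_2(X_\Gamma, \Q)$ and $\alpha_1 \in H_2(\Delta, \Q)$, I would modify $\alpha_1$, by adding a class supported on $X_\Gamma$, so that it becomes an algebraic cycle contained in fibers of the contraction $\pi \colon X_\Gamma^\Sigma \to X_\Gamma^{\rm BB}$.

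First, a Mayer--Vietoris argument for $\Delta = \Delta_{II} \cup \Delta_{III}$ further decomposes $\alpha_1 = \alpha_{II} + \alpha_{III}$. The Type III piece is immediate: each connected component $\bigcup_c \Delta_{I,c}$ of $\Delta_{III}$ is, after the orbifold quotient by $\Gamma_I$, a union of projective toric varieties, and the rational homology of such a variety is generated by the classes of its torus-invariant subvarieties. Thus $\alpha_{III}$ is represented as a $\Q$-linear combination of toric one-strata $C_\sigma$ associated to codimension-one cones $\sigma \in \Sigma_I$, each of which is an algebraic curve contracted to the Type III cusp $p_I \in X_\Gamma^{\rm BB}$.

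For the Type II piece, each $\Delta_J$ is an abelian variety fibration over the compact modular curve $\overline{C}_J$. Via the Leray spectral sequence for this fibration, $H_2(\Delta_J, \Q)$ admits a filtration whose fiber-supported part lives in the $\Gamma_J$-coinvariants of $H_2$ of an abelian fiber, generated by the algebraic polarization class and its Hecke translates. The complementary part pushes forward nontrivially to $H_2(\overline{C}_J, \Q)$; its contribution is killed by subtracting an appropriate multiple of the zero section $s \colon \overline{C}_J \hookrightarrow \Delta_J$, descended from the canonical $\Gamma_J$-equivariant zero section of $T_J \otimes \mathcal{E} \to \mathbb{H}^*$. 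Using the nontrivial normal bundle of $\Delta_J$ in $X_\Gamma^\Sigma$, I would then deform $s$ transversally off $\Delta_J$ into the interior $X_\Gamma$, producing a homologous cycle supported in $X_\Gamma$. Subtracting this interior class turns $\alpha_{II}$ into (algebraic) fiber classes, which are contracted.

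The main obstacle is the Type II analysis, and in particular the handling of the mixed classes in $H_1(\overline{C}_J, R^1\pi_*\Q)$ appearing in the Leray decomposition: these mix base and fiber homology and do not obviously decompose into algebraic fiber classes plus interior classes. I expect the super-rigidity vanishing $H^1(\Gamma_J, \Q) = 0$ from \Cref{s:superR} and a variant of the deformation argument above (now pushing the one-cycles swept by the local system $R^1\pi_*\Q$ off $\Delta_J$ into $X_\Gamma$) to suffice, but the construction must be made compatible with the local intersections $\Delta_J \cap \Delta_I$ along flags $I \subset J$, so that the deformed cycles remain in $X_\Gamma$ rather than crossing into adjacent Type III strata. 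Working inside a sufficiently small $\Gamma_J$-invariant tubular neighborhood of each Type II component should localize the argument and resolve these incompatibilities.
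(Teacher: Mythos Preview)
Your Type III argument has a gap. The claim that $H_2(\Delta_I,\Q)$ is generated by torus-invariant subvarieties is false in general: while each irreducible component $\Delta_{I,c}$ is a projective toric variety with algebraic $H_2$, the union $\Delta_I=\bigcup_c\Delta_{I,c}$ is an SNC divisor whose dual complex is the hyperbolic orbifold $\Gamma_I\backslash\mathbb{P}C_K$. The Mayer--Vietoris (equivalently, Leray--Cartan) spectral sequence yields an exact sequence
\[ H_0(\Gamma_I,H_2(\widetilde{\Delta}_I,\Q))\to H_2(\Delta_I,\Q)\to H_2(\Gamma_I,\Q)\to 0, \]
and the group homology $H_2(\Gamma_I,\Q)$ is typically nonzero. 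Classes mapping nontrivially there are not supported on any single toric component and are not algebraic. The paper handles them by comparing the Leray--Cartan spectral sequences for the $\Gamma_I$-actions on $A_I$ and on $\widetilde{\Delta}_I$: both quotients surject onto $H_2(\Gamma_I,\Q)$, so the non-algebraic part of $\alpha_I$ lifts to $H_2(N^*_{\Delta_I},\Q)$ and can be pushed into the interior. In other words, Type III is not the immediate case; it requires exactly the kind of ``push-off'' argument you were reserving for Type II.

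Your Type II strategy via the Leray spectral sequence of $\pi_J\colon\Delta_J\to B_J$ is more involved than necessary, and the obstacle you identify---the mixed terms in $H_1(B_J,R^1\pi_{J*}\Q)$---is real and not dispatched by the super-rigidity input (which concerns $H^1$ of the full arithmetic group $\Gamma$, not of $\Gamma_J$). The paper bypasses Leray entirely: pick any algebraic curve $C_J$ in a fiber of $\pi_J$, so that $\mathcal{L}_J\cdot C_J<0$ for the normal bundle $\mathcal{L}_J=N_{\Delta_J/X}$; choose $r\in\Q$ with $\mathcal{L}_J\cdot(\alpha_J+rC_J)=0$; represent an integer multiple of $\alpha_J+rC_J$ by an immersed oriented surface $S$ via Thom; then $\mathcal{L}_J|_S$ has degree zero, hence is smoothly trivial, and flowing along a nonvanishing normal section pushes $S$ off $\Delta_J$. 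This gives $\alpha_J=\tfrac{1}{N}[S']-r[C_J]$ with $[S']$ supported away from $\Delta_J$ and $C_J$ a contracted algebraic curve---no decomposition of $H_2(\Delta_J)$ is needed.
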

\begin{proof}
By Theorem \ref{t:splitting}, we may express $\alpha$ as the sum of a class in $H_2(X_\Gamma,\Q)$ and a class supported in the boundary $\Delta$. We will treat Type II and III boundary divisors separately. Recall from \Cref{subsec-ii} that $X_\Gamma^\Sigma\to X_\Gamma^{\rm BB}$ induces a fibration $\pi_J\colon \Delta_J\to B_J$ (in finite quotients of abelian varieties) of each Type II toroidal boundary divisor over the Type II Baily-Borel modular curve, and it contracts each Type III boundary divisor $\Delta_I$ to a point.

In Type II, suppose that $\alpha_J\in H_2(\Delta_J,\Q)$, and let $\mathcal{L}_J$ be the normal
bundle of the smooth divisor $\Delta_J\subset X_\Gamma^\Sigma$. For any algebraic curve $C_J$ contained in a fiber of $\pi_J$, we have $\mathcal{L}_J\cdot C_J<0$ because $C$ is contracted by the morphism to Baily-Borel. Thus, we can choose $r\in \Q$ for which 
\[\mathcal{L}_J\cdot (\alpha_J+rC_J)=0.\]

After clearing denominators, a well-known theorem of Thom says that $N(\alpha_J+rC_J)\in H_2(\Delta_J,\Z)$ can be represented by the fundamental class of an immersed (real) oriented surface, $S$. The pullback of $\mathcal L_J$ to $S$ is a degree 0 complex line bundle, so it is smoothly trivial. By flowing along a nowhere vanishing normal vector field, $S$ is homologous to a surface $S'$ in $X_\Gamma^\Sigma \setminus \Delta_J$. We can then express
\[ \alpha_J = \frac{1}{N}[S'] - r[C_J]. \]

In Type III, suppose that $\alpha_I\in H_2(\Delta_I,\Q)$. Recall that $\Delta_I$ has simple normal crossings with dual complex homeomorphic to the hyperbolic manifold $\Gamma_I \backslash \mathbb{P} C_K$. From the $E_1$-page of the Mayer-Vietoris spectral sequence associated to the covering $\mathfrak U$ of $\Delta_I$ via neighborhoods of the irreducible toric components, we obtain the following short exact sequence for $H_2(\Delta_I,\Q)$, using that $H_1$ vanishes for all toric varieties:
\[H_2(\mathfrak U_1,\Q) \to H_2(\mathfrak U_0,\Q)  \to H_2(\Delta_I,\Q) \to H_2(\Gamma_I,\Q) \to 0.  \]

Recall that the map $N^*_{\Delta_I} \to \Delta_I$ is the quotient of a $\Gamma_I$-equivariant map $V_I \to \widetilde{\Delta}_I$. Now, we appeal to the same functoriality of the Leray-Cartan spectral sequence as in Proposition \ref{groupcoho}, this time for $H_2$ on the $E_2$-page, to obtain
\[\xymatrix{
H_1(\Gamma_I, H_1(T^n,\Q)) \ar[r]\ar[d] & H_2(N^*_{\Delta_I},\Q) \ar[r]\ar[d] & H_2(\Gamma_I,\Q) \ar[r]\ar[d] & 0\\
H_0(\Gamma_I,H_2(\widetilde{\Delta}_I,\Q)) \ar[r] & H_2(\Delta_I,\Q) \ar[r] & H_2(\Gamma_I,\Q) \ar[r] & 0.
}\]
Since $\widetilde{\Delta}_I$ is a union of toric varieties, the $\Gamma_I$-coinvariants of $H_2(\widetilde{\Delta}_I,\Q)$ are precisely the algebraic curve classes in $\Delta_I$. By a diagram chase, $\alpha_I$ can be expressed as: 
$\alpha_I = T+C_I$
where $C_I$ is in the image of $H_0(\Gamma_I,H_2(\widetilde{\Delta}_I,\Q))$ and $T$ is in the image of $H_2(N^*_{\Delta_I},\Q)$, so is supported on the interior. There is a surjection $$\textstyle \bigoplus_c H_2(\Delta_{I,c},\Q)\to H_0(\Gamma_I,H_2(\widetilde{\Delta}_I,\Q))$$ and since $\Delta_{I,c}$ is a toric variety, $H_2(\Delta_{I,c},\Q)$ is generated by algebraic curve classes.
\end{proof}

\begin{proposition}\label{p:toric-splitting} Every algebraic curve class which pushes forward to zero along $X_\Gamma^\Sigma\to X_\Gamma^{\rm BB}$ is homologically equivalent to a linear combination of one-dimensional Type III toric boundary strata. \end{proposition}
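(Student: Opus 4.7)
The plan builds on the decomposition already established in the proof of Theorem \ref{t:algsplitting}: any algebraic cycle contracted by the Baily--Borel morphism $f\colon X_\Gamma^\Sigma\to X_\Gamma^{\rm BB}$ is homologous to $\sum_J r_J[C_J]+[C_I]$, where each $C_J$ is an algebraic curve contained in a single fiber of $\pi_J\colon \overline{\Delta_J}\to B_J$, and $[C_I]$ arises from the image of $H_0(\Gamma_I,H_2(\widetilde{\Delta}_I,\Q))$, that is, from algebraic curve classes on the individual toric components $\Delta_{I,c}$ of the Type III boundary. It therefore suffices to show each of these two pieces is rationally homologous to a linear combination of one-dimensional Type III toric boundary strata.

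For the Type III piece $[C_I]$, I would sharpen the last step of the proof of Theorem \ref{t:algsplitting}: the authors there observe that $\Delta_{I,c}$ is a toric variety and hence $H_2(\Delta_{I,c},\Q)$ is generated by algebraic curve classes. By Danilov's theorem, the rational homology of a complete simplicial toric variety is generated by the classes of one-dimensional torus-invariant subvarieties, and the same holds after passing to the finite quotient by the toric action of ${\rm Stab}_{\Gamma_I}(c)$. These torus-invariant curves in the toric cover of $\Delta_{I,c}$ are one-dimensional toric strata of $X(\Sigma_I)$, and their images descend to one-dimensional Type III toric strata of $X_\Gamma^\Sigma$.

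For the Type II piece $\sum_J r_J[C_J]$, I would refine the choice of the auxiliary curve $C_J$ used in the proof of Theorem \ref{t:algsplitting}. The only property required there is $\mathcal{L}_J\cdot C_J\neq 0$, and by anti-ampleness of $\mathcal{L}_J$ on every fiber of $\pi_J$, \emph{any} algebraic curve contained in any single $\pi_J$-fiber qualifies. Since the Baily--Borel compactification of the modular curve $B_J$ always has cusps (in bijection with the orbits of isotropic lines $I\subset J$), and the closures of the corresponding cusp fibers in $X_\Gamma^\Sigma$ are Mumford degenerations of abelian varieties lying inside $\overline{\Delta_J}\cap \Delta_{I,\overline J}$ and carrying a natural toric structure, I would take $C_J$ to lie inside such a Mumford cusp fiber. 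The Danilov-theorem argument from the Type III case then immediately expresses $[C_J]$ as a rational combination of one-dimensional torus-invariant curves, which are one-dimensional Type III toric strata of $X_\Gamma^\Sigma$.

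The main obstacle is the combinatorial identification of one-dimensional torus-invariant curves of the cusp fibers and of the components $\Delta_{I,c}$ with one-dimensional Type III toric strata of the ambient $X_\Gamma^\Sigma$. Using the description of the Mumford fan in Section \ref{subsec-ii} as the cone over a periodic polyhedral decomposition of $(J^\perp/J)_\R$, this reduces to matching cones of the appropriate codimension in $\Sigma_I$ with toric strata of $X(\Sigma_I)$, and tracking how the Type II closure $\overline{\Delta_J}$ meets these strata. This is purely a matter of bookkeeping in the toroidal combinatorics developed in Section \ref{s:tor-comp}; once settled, the proposition follows from Danilov's theorem and the flexibility in choosing $C_J$.
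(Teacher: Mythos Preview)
Your overall strategy matches the paper's: reduce the Type II case to the Type III case by passing to a cusp fiber, and handle the Type III case using the toric structure (the paper says ``apply the torus action to break the curve into one-dimensional torus orbits,'' which is precisely your Danilov argument). The Type III portion of your proposal is correct and essentially identical to the paper's.

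However, your Type II argument has a genuine gap arising from a conflation of two different objects called $C_J$. In the proposition you are \emph{given} a contracted algebraic curve; its Type II components are specific curves lying in specific fibers of $\pi_J\colon \overline{\Delta_J}\to B_J$. You then speak of ``refining the choice of the auxiliary curve $C_J$ used in the proof of Theorem~\ref{t:algsplitting}'' and ``taking $C_J$ to lie inside a Mumford cusp fiber.'' But the auxiliary curve in Theorem~\ref{t:algsplitting} was something you were free to choose, whereas here $C_J$ is data. Re-running the Type II step of Theorem~\ref{t:algsplitting} on the given $\alpha_J$ with a cusp-fiber auxiliary curve would produce $\alpha_J=\tfrac{1}{N}[S']-r[C_J^{\mathrm{cusp}}]$, and you would then have to dispose of the interior class $[S']$, which you do not address. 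What you actually need is the statement that a curve in the fiber over $j\in B_J$ is homologous in $X_\Gamma^\Sigma$ to a cycle in the fiber over a cusp; the paper obtains this directly by degenerating $j$ to a cusp of the modular curve (equivalently, transporting the curve class along the proper family $\pi_J$). Once you insert this degeneration step, your Danilov argument on the Mumford cusp fiber finishes the job, and your proof becomes the paper's.
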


\begin{proof}
This is shown in \cite[Thm.~7.18]{alexeev-engel}. Any such curve in Type II can be degenerated into the Type III locus by limiting the image point $j\in B_J$ to a cusp of the modular curve. This reduces to the Type III case. One applies the torus action on a Type III toric boundary divisor to further break any curve into one-dimensional torus orbits.
\end{proof}

\section{Mixed mock modularity of special divisors}

In this section, we assemble all the ingredients from previous sections to prove the main theorems of the paper. 

Let $\Sigma$ be an admissible fan and assume that $\Sigma$ is regular and $\Gamma$ is neat---the general case reduces to this one by \Cref{r:simplicial}. Let $I\subset L$ be an isotropic line and let $\Sigma_I$ be the $\Gamma_I$-invariant polyhedral cone decomposition of the cone $C_K^+$ inside the hyperbolic lattice $K=I^{\bot}/I$. 

To prove \Cref{t:precise-main} (which in turn implies \Cref{t:main}), it suffices, by Poincar\'e duality, to intersect the generating series of special divisors with each class $\alpha\in H_2(X_\Gamma^\Sigma,\Q)$.
By \Cref{t:algsplitting} and \Cref{p:toric-splitting}, any class $\alpha$ can be expressed as
\[\alpha=\alpha'+\sum_{\sigma}d_\sigma [C_\sigma],\quad d_\sigma\in\Q,\]
where $\alpha'\in H_2(X_\Gamma,\Q)$ and $C_{\sigma}\simeq \mathbb{P}^1$ are the Type III toric boundary strata associated to codimension $1$ cones $\sigma\in \Sigma_I$ ranging over all $I$.

Since the pairing of the generating series of special divisors with the class $\alpha'$ is a holomorphic modular form of weight $1+\frac{n}{2}$ and representation $\overline{\rho}_L$ by \cite{kudla-millson}, we can assume that $\alpha'=0$ and that $\alpha=[C]$ is the class of a single toric curve determined by a cone $\sigma$. 

By \Cref{t:intersection-formula}, the $\C[L^\vee/L]$-valued generating series of intersecting $[C]$ with special cycles is equal to \[\sum_{\lambda\in K^{\vee}}p^{+}(\lambda)q^{-Q(\lambda)}p_K^L(\mathfrak{e}_{[\lambda]}).\] Here
$p^+(\lambda)=\tfrac{1}{2}\left(|\lambda\cdot c^+|+|\lambda\cdot c^-|+\sum_i a_i|\lambda\cdot c_i|\right)$ and $\{c_i\}_{i=1}^{n-1}$ are generators of the rays of $\sigma$, which, with $c^+$, $c^-$, respectively generate the maximal cones $\sigma^+$, $\sigma^-$ containing $\sigma$. To make the notation compatible with \Cref{sec2}, we declare $c_n:=c^+$, $c_{n+1}:=c^-$, and $a_n=a_{n+1}=1$ so that $\{c_i\}_{i=1}^{n+1}$ are a collection of integral vectors in $C_K^+$ for which $\sum a_ic_i=0$. We have $\Delta_{I, c_i}\cdot C=a_i$ for all $i=1,\dots,n+1$. This holds even when $c_i$ is isotropic, in which case $\Delta_{J_i}\cdot C=a_i$ for the corresponding rank $2$ isotropic lattice $J=I\oplus \Z c_i$.

For any non-isotropic ray $\R_{\geq 0}c$ in $\Sigma_I$, let $N_c=Q(c)>0$ and let $c^{\bot}$ be the orthogonal complement of $c$ in $K$. It is a negative definite lattice  of rank $n-1$. Its vector-valued theta series
\[\Theta_{c^{\bot}}=\sum_{\delta \in (c^{\bot})^\vee}q^{-Q(\delta)}\mathfrak{e}_{[\delta]}\in\C[[q^{\frac{1}{D_{c^\perp}}}]]\otimes \C[(c^\perp)^\vee/c^\perp]\]
is a holomorphic modular form of weight $\frac{n-1}{2}$ with respect to the Weil representation $\overline{\rho}_{c^{\bot}}$. Similarly, for any isotropic ray $\R_{\geq 0}c$, the vector-valued theta series $$\Theta_M = \sum_{\gamma\in M^\vee} q^{-Q(\gamma)}\mathfrak{e}_{[\gamma]}\in \C[[q^{\frac{1}{D_M}}]]\otimes \C[M^\vee/M]$$ is a holomorphic modular form of weight $\tfrac{n}{2}-1$ with respect to $\overline{\rho}_M$, where $M=c^\perp/c$.
By \Cref{typeII+typeIII}, the function $\Theta_{\sigma}$ defined by 
\begin{align*} \sum_{\lambda\in K^{\vee}}p^{+}(\lambda)  q^{-Q(\lambda)}\mathfrak{e}_{[\lambda]}-\!\!\sum_{Q(c_i)>0}\! \frac{a_i}{2} \uparrow_{c_i^\bot\oplus \Z c_i}^K(\Theta_{c_i^{\bot}}\otimes F_{N_i}^+)+\!\sum_{Q(c_i)=0} \frac{a_i}{12} E_2\uparrow_{M_i}^{K}(\Theta_{M_i})\end{align*}
is a (weakly) holomorphic modular form of weight $1+\frac{n}{2}$ and representation $\overline{\rho}_K$. On the other hand, 
\begin{align*} \bigg[[Z&(0,0)]\otimes \mathfrak{e}_0 + \sum_{\beta\in L^\vee/L}\sum_{m\in -Q(\beta)+\Z} [Z(\beta,m)]\otimes \mathfrak{e}_\beta q^m\,- \\ &\sum_{(I,c)} \frac{1}{2} \uparrow_{c^\bot\oplus \Z c}^{L}(\Theta_{c^{\bot}}\otimes F_{N}^+)\otimes \Delta_{I,c} + \sum_J \frac{1}{12} E_2p_M^L(\Theta_M)\otimes \Delta_J\bigg]\cdot C = \\
& \hspace{-12pt}\sum_{\lambda\in K^{\vee}}p^{+}(\lambda)q^{-Q(\lambda)}p_K^L(\mathfrak{e}_{[\lambda]}) \,-\\ &\sum_{Q(c_i)>0}\!\frac{a_i}{2}\uparrow_{c_i^\bot\oplus \Z c_i}^{L}(\Theta_{c_i^{\bot}}\otimes F_{N_i}^+)+\!\sum_{Q(c_i)=0} \frac{a_i}{12} E_2\uparrow_{M_i}^L(\Theta_{M_i})= p_K^L(\Theta_\sigma)\end{align*}
is (weakly) holomorphic modular form of weight $1+\frac{n}{2}$ and representation $\overline{\rho}_L$. Here $i$ ranges only over $1,\dots,n+1$ because all other components of the boundary $\Delta$ intersect $C$ to be zero. 
Hence the pairing of the series in \Cref{t:precise-main} with $[C]$ is a (weakly) holomorphic modular form, which concludes the proof.

\bibliographystyle{alpha}
\bibliography{bibliographie}

\begin{thebibliography}{AMRT10}

\bibitem[AE23]{alexeev-engel}
Valery Alexeev and Philip Engel.
\newblock Compact moduli of {K3} surfaces.
\newblock {\em Ann. Math. (2)}, 198(2):727--789, 2023.

\bibitem[AMRT10]{mumford-amrt}
Avner Ash, David Mumford, Michael Rapoport, and Yung-Sheng Tai.
\newblock {\em Smooth compactifications of locally symmetric varieties}.
\newblock Cambridge Mathematical Library. Cambridge University Press,
  Cambridge, second edition, 2010.
\newblock With the collaboration of Peter Scholze.

\bibitem[BB66]{bailyborel}
W.~L. Baily, Jr. and A.~Borel.
\newblock Compactification of arithmetic quotients of bounded symmetric
  domains.
\newblock {\em Ann. of Math. (2)}, 84:442--528, 1966.

\bibitem[BF04]{bruinierfunke}
Jan~Hendrik Bruinier and Jens Funke.
\newblock On two geometric theta lifts.
\newblock {\em Duke Math. J.}, 125(1):45--90, 2004.

\bibitem[BFOR17]{folsom}
Kathrin {Bringmann}, Amanda {Folsom}, Ken {Ono}, and Larry {Rolen}.
\newblock {\em {Harmonic Maass forms and mock modular forms: theory and
  applications}}, volume~64.
\newblock Providence, RI: American Mathematical Society (AMS), 2017.

\bibitem[Bor95]{borcherds-95-infinite-products}
Richard~E. Borcherds.
\newblock Automorphic forms on {${\rm O}_{s+2,2}({\bf R})$} and infinite
  products.
\newblock {\em Invent. Math.}, 120(1):161--213, 1995.

\bibitem[Bor98]{borcherds-inventiones-automorphic}
Richard~E. Borcherds.
\newblock Automorphic forms with singularities on {G}rassmannians.
\newblock {\em Invent. Math.}, 132(3):491--562, 1998.

\bibitem[Bor99]{borcherdszagier}
Richard~E. Borcherds.
\newblock The {G}ross-{K}ohnen-{Z}agier theorem in higher dimensions.
\newblock {\em Duke Math. J.}, 97(2):219--233, 1999.

\bibitem[Bru02]{bruinier}
Jan~H. Bruinier.
\newblock {\em Borcherds products on {O}(2, {$l$}) and {C}hern classes of
  {H}eegner divisors}, volume 1780 of {\em Lecture Notes in Mathematics}.
\newblock Springer-Verlag, Berlin, 2002.

\bibitem[Bru23]{bruinier-letter}
Bruinier.
\newblock Letter from {B}ruinier.
\newblock 23 June 2023.

\bibitem[BS17]{bruinier-sch}
Jan~Hendrik Bruinier and Markus Schwagenscheidt.
\newblock Algebraic formulas for the coefficients of mock theta functions and
  {W}eyl vectors of {B}orcherds products.
\newblock {\em J. Algebra}, 478:38--57, 2017.

\bibitem[BZ22]{bruinierzemel}
Jan~Hendrik Bruinier and Shaul Zemel.
\newblock Special cycles on toroidal compactifications of orthogonal {S}himura
  varieties.
\newblock {\em Math. Ann.}, 384(1-2):309--371, 2022.

\bibitem[{Fio}16]{fiori}
Andrew {Fiori}.
\newblock {Toroidal Compactifications and Dimension Formulas for Spaces of
  Modular Forms for Orthogonal Shimura Varieties}.
\newblock {\em arXiv e-prints}, page arXiv:1610.04865, October 2016.

\bibitem[FK17]{funkekudla}
Jens Funke and Stephen~S. Kudla.
\newblock Mock modular forms and geometric theta functions for indefinite
  quadratic forms.
\newblock {\em J. Phys. A}, 50(40):404001, 19, 2017.

\bibitem[Fun02]{funke}
Jens Funke.
\newblock Heegner divisors and nonholomorphic modular forms.
\newblock {\em Compos. Math.}, 133(3):289--321, 2002.

\bibitem[Gar18]{garcia}
Luis~E. Garcia.
\newblock Superconnections, theta series, and period domains.
\newblock {\em Adv. Math.}, 329:555--589, 2018.

\bibitem[{Gar}23]{garcia-kudla-millson}
Luis~E. {Garc{\'\i}a}.
\newblock {Kudla-Millson forms and one-variable degenerations of Hodge
  structure}.
\newblock {\em arXiv e-prints}, page arXiv:2301.08733, January 2023.

\bibitem[Gre19]{greer}
Fran{\c{c}}ois Greer.
\newblock Quasi-modular forms from mixed {Noether}-{Lefschetz} theory.
\newblock {\em Adv. Math.}, 355:25, 2019.
\newblock Id/No 106765.

\bibitem[HZ76]{hirzag}
F.~Hirzebruch and D.~Zagier.
\newblock Intersection numbers of curves on {H}ilbert modular surfaces and
  modular forms of {N}ebentypus.
\newblock {\em Invent. Math.}, 36:57--113, 1976.

\bibitem[KM90]{kudla-millson}
Stephen~S. Kudla and John~J. Millson.
\newblock Intersection numbers of cycles on locally symmetric spaces and
  {F}ourier coefficients of holomorphic modular forms in several complex
  variables.
\newblock {\em Inst. Hautes \'{E}tudes Sci. Publ. Math.}, (71):121--172, 1990.

\bibitem[Kul77]{Kulikov}
Vik.~S. Kulikov.
\newblock Degenerations of {$K3$} surfaces and {E}nriques surfaces.
\newblock {\em Izv. Akad. Nauk SSSR Ser. Mat.}, 41(5):1008--1042, 1199, 1977.

\bibitem[Loo03]{looijenga}
Eduard Looijenga.
\newblock Compactifications defined by arrangements. {II}: {Locally} symmetric
  varieties of type {IV}.
\newblock {\em Duke Math. J.}, 119(3):527--588, 2003.

\bibitem[Mar91]{margulis}
G.~A. Margulis.
\newblock {\em Discrete subgroups of semisimple {Lie} groups}, volume~17 of
  {\em Ergeb. Math. Grenzgeb., 3. Folge}.
\newblock Berlin etc.: Springer-Verlag, 1991.

\bibitem[Pet15]{peterson}
Arie Peterson.
\newblock {\em {Modular forms on the moduli space of polarised {K3} surfaces}}.
\newblock PhD thesis, {Korteweg-de Vries Institute for Mathematics}, June 2015.

\bibitem[PP81]{Persson}
Ulf Persson and Henry Pinkham.
\newblock Degeneration of surfaces with trivial canonical bundle.
\newblock {\em Ann. of Math. (2)}, 113(1):45--66, 1981.

\bibitem[Sat60]{satake}
Ichir Satake.
\newblock On compactifications of the quotient spaces for arithmetically
  defined discontinuous groups.
\newblock {\em Ann. of Math. (2)}, 72:555--580, 1960.

\bibitem[Ser77]{cours}
Jean-Pierre Serre.
\newblock {\em Cours d'arithm\'etique}.
\newblock Presses Universitaires de France, Paris, 1977.
\newblock Deuxi\`eme \'edition revue et corrig\'ee, Le Math\'ematicien, No. 2.

\bibitem[Vig77]{vigneras}
Marie-France Vign\'{e}ras.
\newblock S\'{e}ries th\^{e}ta des formes quadratiques ind\'{e}finies.
\newblock In {\em S\'{e}minaire {D}elange-{P}isot-{P}oitou, 17e ann\'{e}e
  (1975/76), {T}h\'{e}orie des nombres: {F}asc. 1, {E}xp. {N}o. 20}, page~3.
  1977.

\bibitem[WM14]{davewitte}
Dave Witte~Morris.
\newblock Vanishing of first cohomology for aritmetic groups of higher real
  rank.
\newblock {\em https://people.uleth.ca/~dave.morris/talks/H1Vanish.pdf}, 2014.

\bibitem[Zag75]{zagier-cras}
Don Zagier.
\newblock Nombres de classes et formes modulaires de poids {$3/2$}.
\newblock {\em C. R. Acad. Sci. Paris S\'{e}r. A-B}, 281(21):Ai, A883--A886,
  1975.

\bibitem[Zag09]{zagier-bourbaki}
Don Zagier.
\newblock Ramanujan's mock theta functions and their applications (after
  {Z}wegers and {O}no-{B}ringmann).
\newblock Number 326, pages Exp. No. 986, vii--viii, 143--164 (2010). 2009.
\newblock S\'{e}minaire Bourbaki. Vol. 2007/2008.

\bibitem[Zem20]{zemel-parabolic}
Shaul Zemel.
\newblock The structure of integral parabolic subgroups of orthogonal groups.
\newblock {\em J. Algebra}, 559:95--128, 2020.

\bibitem[{Zwe}02]{zwegers}
Sander~Pieter {Zwegers}.
\newblock {\em {Mock theta functions}}.
\newblock Utrecht: Universiteit Utrecht, Faculteit Wiskunde en Informatica
  (Diss.), 2002.

\end{thebibliography}
\end{document}